\def\a{\alpha}
\def\b{\beta}
\def\d{\delta}
\def\e{\varepsilon}
\def\f{\varphi}
\def\Ga{\Gamma}
\def\l{\lambda}
\def\L{\Lambda}
\def\o{\omega}
\def\s{\sigma}
\def\R{{\bf R}}
\def\G{{\bf G}}
\def\Q{{\bf Q}}
\def\Z{{\bf Z}}
\def\C{{\bf C}}
\def\E{{\bf E}}
\def\F{{\bf F}}
\def\P{{\bf P}}
\def\wh{\widehat}
\def\wt{\widetilde}
\def\NN{\mathcal{N}}
\def\FF{\mathcal{F}}
\def\GG{\mathcal{G}}
\def\HH{\mathcal{H}}
\def\LL{\mathcal{L}}
\def\CC{\mathcal{C}}
\def\AA{\mathcal{A}}
\def\BB{\mathcal{B}}
\def\OO{\mathcal{O}}
\def\fg{\mathfrak{g}}
\def\fh{\mathfrak{h}}
\def\fsl{\mathfrak{sl}}
\def\sm{\backslash}
\def\supp{{\rm supp}\,}
\def\be{\begin{equation}}
\def\ee{\end{equation}}
\def\bea{\begin{eqnarray}}
\def\eea{\end{eqnarray}}
\def\bean{\begin{eqnarray*}}
\def\eean{\end{eqnarray*}}
\newtheorem{lem}{Lemma}
\newtheorem{thm}[lem]{Theorem}
\newtheorem{prp}[lem]{Proposition}
\newtheorem{cor}[lem]{Corollary}
\theoremstyle{definition}
\def\sep{\;\vrule\;}
\def\la{\lesssim}
\newcommand{\restr}[1]{\vrule_{\:#1}}
\title{Expansion in $SL_d(\OO_K/I)$, $I$ square-free}
\author{P\'eter P. Varj\'u}
\def\Tr{{\rm Tr}}
\newcommand{\mat}[4]
{\left(
\begin{array}{cc}
#1 & #2 \\
#3 & #4
\end{array}
\right)}
\begin{document}

\maketitle
\begin{abstract}
Let $S$ be a fixed symmetric finite subset of $SL_d(\OO_K)$
that generates a Zariski dense subgroup of $SL_d(\OO_K)$
when we consider it
as an algebraic group over $\Q$ by restriction of scalars.
We prove that the Cayley graphs of $SL_d(\OO_K/I)$ with
respect to the projections of $S$ is an expander family
if $I$ ranges over square-free ideals of $\OO_K$ if $d=2$
and $K$ is an arbitrary numberfield, or if $d=3$ and $K=\Q$.
\end{abstract}

\section{Introduction}
\label{sec_intro}
Let $\GG$ be a graph, and for a set of vertices
$X\subset V(\GG)$, denote by $\partial X$ the set of
edges that connect a vertex in $X$ to one in $V(\GG)\sm X$.
Define
\[
c(\GG)=\min_{X\subset V(\GG),\;\;|X|\le|V(\GG)|/2}
\frac{|\partial X|}{|X|},
\]
where $|X|$ denotes the cardinality of the set $X$.
A family of graphs is called a family of expanders, if
$c(\GG)$ is bounded away from zero for
graphs $\GG$ that belong to the family.
Expanders have a wide range of applications in computer
science (see e.g. Hoory, Linial and Widgerson \cite{HLW} for a recent
survey of expanders and their applications) and
recently they found remarkable applications in pure mathematics
as well (see Bourgain, Gamburd and Sarnak \cite{BGS} and
Long, Lubotzky and Reid \cite{LLR}).

Let $G$ be a group and let $S \subset G$ be a symmetric (i.e. closed
for taking inverses) set of generators.
The Cayley graph $\GG(G,S)$ of $G$ with respect to the
generating set $S$ is defined to be the graph whose vertex set
is $G$, and in which two vertices $x,y\in G$ are connected
exactly if $y\in Sx$.
Let $K$ be a number-field and denote by $\OO_K$ its ring of
integers.
Let $I\subset\OO_K$ be an ideal, and denote by $\pi_I$
the projection $\OO_K\to\OO_K/I$.
In this paper we study the problem whether the graphs
$\GG(SL_d(\OO_K/I),\pi_I(S))$ form an expander family,
where $S\subset SL_d(\OO_K)$ is a fixed symmetric set
of matrices and $I$ runs through certain ideals of $\OO_K$.
This problem was addressed by Bourgain and Gamburd
in a series of papers \cite{BG1}--\cite{BG3}, and by them
jointly with Sarnak in \cite{BGS}.
It is solved for $K=\Q$ in the following cases:
in $\cite{BG1}$ for $d=2$, when
and $I=(p)$ runs through primes, in $\cite{BGS}$ for  $d=2$
and $I=(q)$, $q$ is square-free and in $\cite{BG2}$
and $\cite{BG3}$ when $I=(p^n)$, $p^n$ is a primepower.
(When $d\ge 3$, the prime $p$ has to be kept fixed.)
The necessary and sufficient condition in each case for the Cayley
graphs to be expanders is that $S$ generates a Zariski dense
subgroup $\Ga<SL_d(\C)$.
In $\cite{BGS}$ the expander property is used for $K=\Q(\sqrt{-1})$
for sieving in the context of integral Apollonian packings,
this is our main motivation for extending the problem for
general number-fields.

The starting point for our study is the work of Helfgott
\cite{Hel}, \cite{He2}.
He studies the following problem:
Let $\FF$ be a family of finite fields and let $d\ge2$ be an integer.
Is there a constant $\d>0$ such that for any generating set
$A\subset SL_d(F)$, $F\in\FF$ we have
\be
\label{eq_helfgott}
|A.A.A|\ge|A|\min(|A|,|SL_d(F)|/|A|)^{\d}?
\ee
Here and everywhere in what follows, we use the notation
\[
A.B=\{gh\sep g\in A, h\in B\},
\]
if $A$ and $B$ are subsets of a multiplicative group.
Helfgott answers this question to the affirmative, when
$\FF$ is the family of prime fields and $d=2$ \cite{Hel}
or $d=3$ \cite{He2}.
In section \ref{sec_a4} we show that \cite{Hel}
(i.e. the proof for the case $d=2$) easily extends to the
case of arbitrary finite fields.

Let $r$ be the degree of the number-field $K$, and denote by
$\s_1,\ldots,\s_r$ the embeddings of $K$ into $\C$.
Denote by $\wh \s=\s_1\oplus\cdots\oplus\s_r$ the obvious map
$K\to \C^r$.
This gives rise to an embedding (which will also be denoted by $\wh \s$)
of $SL_d(\OO_K)$ into the direct
product $SL_d(\C)^r$.
Our main result is
\begin{thm}
\label{thm_main}
Let $S\subset SL_d(\OO_K)$ be symmetric and assume that it
generates a subgroup $\Ga<SL_d(\OO_K)$ such that
$\wh \s(\Ga)\subset SL_d(\C)^r$ is Zariski dense.
Assume further that (\ref{eq_helfgott}) holds for some
constant $\d>0$ if $F$ ranges over the fields $\OO_K/P$,
where $P\subset\OO_K$ is a prime ideal.
Then there is an ideal $J\subset\OO_K$ such that
$\GG(SL_d(\OO_K/I),\pi_I(S))$ is a family
of expanders if $I\subset\OO_K$ ranges over square-free ideals prime to $J$.
\end{thm}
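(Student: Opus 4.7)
The proof plan is to run the Bourgain-Gamburd expander machine on the groups $G_I:=SL_d(\OO_K/I)$. First, I would invoke strong approximation: since $\wh\s(\Ga)$ is Zariski dense in $SL_d(\C)^r$, a theorem of Matthews-Vaserstein-Weisfeiler supplies an ideal $J\subset\OO_K$ such that $\pi_I(\Ga)=G_I$ for every $I$ coprime to $J$. By the Chinese Remainder Theorem, writing a square-free $I=P_1\cdots P_k$ as a product of distinct prime ideals, one has $G_I\cong \prod_{i=1}^k SL_d(\OO_K/P_i)$, so the problem reduces to producing a uniform spectral gap for Cayley graphs on products of $SL_d$ over residue fields of varying sizes.

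Denote by $\mu$ the uniform probability measure on $S$ and by $\mu^{*n}$ its $n$-fold convolution, abusing notation by also writing $\mu^{*n}$ for its pushforward to $G_I$. The Bourgain-Gamburd machine requires three inputs. First, the hypothesised product bound (\ref{eq_helfgott}) in each prime factor $SL_d(\OO_K/P_i)$, which, combined with a non-commutative Balog-Szemer\'edi-Gowers argument, yields the $\ell^2$-flattening step: either $\|\mu^{*2n}\|_2$ drops by a factor $|G_I|^{-\k}$ over $\|\mu^{*n}\|_2$, or $\mu^{*n}$ is concentrated on an approximate subgroup and hence, by the product theorem, near a genuine proper subgroup. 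Second, quasi-randomness: Landazuri-Seitz bounds transfer through tensor products to give $\dim\rho \gea |\OO_K/P_i|^{d-1}$ for any irreducible representation of $G_I$ that is nontrivial on the $i$-th factor; this suffices once the normal subgroups obtained by grouping factors are handled inductively along the CRT decomposition. Third, non-concentration: for $n\asymp \log|G_I|$ one needs $\mu^{*n}(H)\le |G_I|^{-\eta}$ for every proper subgroup $H\le G_I$.

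The main obstacle is the non-concentration estimate, because proper subgroups of $\prod_i SL_d(\OO_K/P_i)$ are more intricate than in the prime-modulus case. By Goursat's lemma, a proper $H$ surjecting onto each factor must contain, for some pair of indices $i\ne j$ with $|\OO_K/P_i|=|\OO_K/P_j|$, a graph identification induced by a field isomorphism $\s\colon \OO_K/P_i\to \OO_K/P_j$ (composed with an inner automorphism of $SL_d$). I would bound $\mu^{*n}(H)$ by a large-deviation/second-moment argument: for each such pair $(P_i,P_j)$ and each $\s$, count the words $w\in S^n$ whose reductions in the two factors are $\s$-related, using equidistribution of $\mu^{*n}$ in each single factor (available from the prime case of the machine applied with the hypothesised (\ref{eq_helfgott})) to dominate the count by $|S|^{2n}/|SL_d(\OO_K/P_i)|$ up to lower-order terms. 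A union bound over the finitely many Goursat data, followed by a chain of normal-subgroup reductions along the partition of primes by residue-field size, produces the required estimate. The exceptional ideal $J$ absorbs the finitely many primes where strong approximation or the non-concentration input would fail; once it is in place, chaining the three ingredients gives $\|\mu^{*n}\|_2\le 2|G_I|^{-1/2}$ for some $n\le C\log|G_I|$, hence spectral gap, uniform over square-free $I$ coprime to $J$.
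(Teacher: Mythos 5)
Your overall architecture --- strong approximation to reduce to $G_I=\prod_i SL_d(\OO_K/P_i)$, then the three Bourgain--Gamburd inputs (product bound plus Balog--Szemer\'edi--Gowers for $\ell^2$-flattening, quasi-randomness, non-concentration) --- is the right skeleton, and your treatment of the first two inputs is roughly in the spirit of the paper (which actually builds the flattening step from assumptions (A0)--(A5) via Gowers quasi-randomness, an entropy/tree argument, and Farah's approximate-homomorphism theorem, rather than a single BSG invocation, but that is elaboration rather than a conceptual difference). The genuine gap is in the non-concentration step. Your proposal estimates $\mu^{*n}(H)$ for a Goursat ``twisted diagonal'' $H$ by using equidistribution of $\mu^{*n}$ in each single factor and then bounding the count of words whose two projections are $\s$-related. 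This is circular: equidistribution of the two marginals gives no control whatsoever on the joint distribution. If the walk were actually stuck in the Goursat subgroup $H$ --- precisely the scenario you must exclude --- then both marginals would still equidistribute while $\mu^{*n}(H)$ stays close to $1$. There is no second-moment or union-bound shortcut here; you need an independent mechanism to rule out correlation between factors.

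The paper's actual mechanism is different and is the heart of Section~\ref{sec_escp}: non-concentration is proved \emph{upstairs} in the infinite group, not in $G_I$. For $\ell\lesssim\log N(I)$ the walk stays among matrices of small norm, so $\pi_I$ is injective on its support, and $\pi_I[\chi_{S'}^{(\ell)}](H)$ equals the mass of a set $\LL_\d(H^\sharp)\subset SL_d(\OO_K)$. Proposition~\ref{prp_alg} (via Nori's description of subgroups of $GL_d(\F_p)$ plus an effective Bezout argument) shows that $\LL_\d(H^\sharp)$ lies in a proper algebraic subgroup of $\G$ cut out by a trace identity $\Tr(\a(h)u\a(h)^{-1}v)=0$, and Corollary~\ref{cor_alg} converts this into either preservation of a proper subspace by one archimedean embedding, or an intertwining $T(\s_1(h)v\s_1(h)^{-1})=\s_2(h)T(v)\s_2(h)^{-1}$ between two \emph{archimedean} embeddings $\s_1,\s_2:K\to\C$. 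Exponential escape from these subgroups is then proved by a ping-pong argument using proximal elements (Propositions~\ref{prp_subspc} and~\ref{prp_trans}, Lemma~\ref{lem_alg}, Kesten's bound). Note that Goursat's lemma does appear (Lemma~\ref{lem_subprod}) but only to reduce a general $H$ to a direct product of proper factor subgroups using Lemma~\ref{lem_univalg}; the residual ``diagonal'' difficulty resurfaces as an intertwining of archimedean places, which is the genuinely new phenomenon over a number field and is absent in $K=\Q$. Your proposal, by contrast, never leaves the finite quotient and never touches the archimedean embeddings, which is why it cannot close the non-concentration estimate.
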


It is contained in the claim that $\pi_I(S)$ generates $SL_d(\OO_K/I)$
if $I$ is prime to $J$.
In fact,
$J$ can be taken to be the product of prime ideals $P$ for which
$\pi_P(S)$ does not generate $SL_d(\OO_K/P)$, this fact will be proven
together with the theorem.
We remark that the condition on Zariski density is necessary, otherwise
$\pi_{(q)}(S)$ would not generate $SL_d(\OO_K/(q))$ for any
rational integer q. 
Note that by the above remarks on Helfgott's work, the theorem
is unconditional for $d=2$ and arbitrary $K$ or for $d=3$ and
$K=\Q$.

We introduce some notation that will be used throughout the
paper.
We use Vinogradov's notation $x\ll y$ as a shorthand for $|x|<C y$
with some constant $C$.
Let $G$ be a discrete group.
The unit element of any multiplicatively 
written group is denoted by 1.
For given subsets $A$ and $B$, we denote their product-set by
\[
A.B=\{gh\sep g\in A,h\in B\},
\]
while the $k$-fold iterated product-set of $A$ is denoted by
$\prod_k A$.
We write $\wt A$ for the set of inverses of all elements of $A$.
We say that $A$ is symmetric if $A=\wt A$.
The number of elements of a set $A$ is denoted by $|A|$.
The index of a subgroup $H$ of $G$ is denoted by
$[G:H]$ and we write $H_1\la_L H_2$ if $[H_1:H_1\cap H_2]\le L$
for some subgroups $H_1,H_2<G$.
Occasionally (especially when a ring structure is present) we
write groups additively, then we write
\[
A+B=\{g+h\sep g\in A,h\in B\}
\]
for the sum-set of $A$ and $B$, $\sum_k A$ for the $k$-fold
iterated sum-set of $A$ and 0 for the unit element.

If $\mu$ and $\nu$ are complex valued functions on $G$, we define
their convolution by
\[
(\mu*\nu)(g)=\sum_{h\in G}\mu(gh^{-1})\nu(h),
\]
and we define $\wt \mu$ by the formula
\[
\wt\mu(g)=\mu(g^{-1}).
\]
We write $\mu^{(k)}$ for the $k$-fold convolution of $\mu$ with itself.
As measures and functions are essentially the same on discrete
sets, we use these notions interchangeably, we will also
use the notation
\[
\mu(A)=\sum_{g\in A} \mu(g).
\]
A probability measure is a nonnegative measure with total mass 1.
Finally, the normalized counting measure on a finite
set $A$ is the probability
measure
\[
\chi_A(B)=\frac{|A\cap B|}{|A|}.
\]

We use the same approach to prove Theorem \ref{thm_main}
as in \cite{BG1}--\cite{BGS} which goes back to
\cite{SaX}, we outline this here only,
the details will be given in section \ref{sec_proof}.
Let $\GG$ be an $m$-regular graph, i.e. each vertex is of degree $m$.
It is easy to see that the largest eigenvalue of the adjacency
matrix of $\GG$ is $m$, and it is a simple eigenvalue if
and only if the graph is connected.
Denote by $\l_2(\GG)$ the second largest eigenvalue of the adjacency matrix.
It was proven by Dodziuk \cite{Dod}, Alon and Milman \cite{AlMi}
and Alon \cite{Alo} that a family of graphs is an expander family,
if and only if $m-\l_2(\GG)$ is bounded away from zero, see also
\cite[Theorem 2.4]{HLW}.
For a Cayley graph $\GG(G,S)$, the adjacency matrix is a constant
multiple of convolution by $\chi_S$ from the left considered
as an operator.
Then the multiplicities of the nontrivial eigenvalues are at least
the minimum dimension of a nontrivial representation of $G$.
In the case of $SL_d$ good bounds are know, hence it is enough to estimate
the trace of the operator.
More precisely, with the notation of Theorem \ref{thm_main},
we need to show that for any $\e>0$ there is a constant $C=C(\e,S)$
such that
\be
\label{eq_goal}
\|\pi_I[\chi_S^{(C\log N(I))}]\|_2<|SL_d(\OO_K/I)|^{-1/2+\e},
\ee
where $N(I)$ is the norm of the ideal.
In fact, (\ref{eq_goal}) means that the random walk on
$\GG(SL_d(\OO_K/I),\pi_I(S))$
is close to equidistribution after $C\log N(I)$ steps.

The proof of (\ref{eq_goal}) has two parts, the first is
\begin{thm}
\label{thm_escp}
Let $S\subset SL_d(\OO_K)$ be symmetric, and denote by $\Ga$ the subgroup
it generates.
Assume that $\wh\s(\Ga)$ is Zariski dense in $SL_d(\C)^r$.
Then there is a constant $\d$ depending only on $S$,
and there is a symmetric set $S'\subset \Ga$
such that the following holds.
For any square-free ideal $I$, for any proper
subgroup $H<SL_d(\OO_K/I)$ and for any even integer $l\ge\log N(I)$,
we have
\[
\pi_I[\chi_{S'}^{(l)}](H)\ll[SL_d(\OO_K/I):H]^{-\d}.
\]
\end{thm}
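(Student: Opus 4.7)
I would first reduce to the case where $H$ is a maximal proper subgroup of $SL_d(\OO_K/I)$, since escape from an arbitrary subgroup follows by passing to a maximal one containing it. Writing the square-free ideal as $I = P_1 \cdots P_k$ with distinct prime factors, the Chinese Remainder Theorem gives
\[
SL_d(\OO_K/I) \cong \prod_{i=1}^{k} SL_d(\OO_K/P_i).
\]
A form of strong approximation (Nori--Weisfeiler) ensures that $\pi_I(\Ga) = SL_d(\OO_K/I)$ for all $I$ coprime to a fixed ideal $J$, isolating the exceptional primes. The set $S'$ will be chosen as a carefully selected iterated product of $S$: using that $\wh\s(\Ga)$ is Zariski dense in $SL_d(\C)^r$ together with the Tits alternative, I would extract elements in ``ping-pong'' position, so that elements of $S'$ escape any proper algebraic subvariety of $SL_d^r$ at a definite rate.

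\textbf{Goursat classification of maximal subgroups.} After quotienting by the (negligible) center, the maximal proper subgroups of $\prod_i SL_d(\OO_K/P_i)$ split into two families: (a) \emph{pullbacks}, of the form $\pi_{I/P_i}^{-1}(M_i)$ for some maximal proper $M_i < SL_d(\OO_K/P_i)$; and (b) \emph{graph subgroups} arising from an isomorphism $\f : SL_d(\OO_K/P_i) \to SL_d(\OO_K/P_j)$ between two factors, which by Steinberg forces $\OO_K/P_i \cong \OO_K/P_j$ and gives $\f$ via a field isomorphism possibly composed with transpose-inverse. This is the only place where the square-free (as opposed to prime) nature of $I$ enters nontrivially.

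\textbf{Case (a): single-prime escape.} For a pullback $H = \pi_{I/P_i}^{-1}(M_i)$, the statement reduces to $\pi_{P_i}[\chi_{S'}^{(l)}](M_i) \ll [SL_d(\OO_K/P_i):M_i]^{-\d}$. Maximal proper subgroups of $SL_d(\OO_K/P)$ have index at least $|SL_d(\OO_K/P)|^{\a}$ for some absolute $\a>0$ by Nori--Larsen--Pink. Given the assumed growth estimate (\ref{eq_helfgott}) over the residue fields $\OO_K/P$, the Bourgain--Gamburd machine applied to each $SL_d(\OO_K/P_i)$ produces a spectral gap after $\ll \log N(P_i)$ steps, yielding the desired power-saving with a $\d' > 0$ independent of $P_i$.

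\textbf{Case (b): graph subgroups — the main obstacle.} The delicate case is when $H$ comes from an isomorphism $\f$ between two factors with equal residue fields; here the square-free result in a number-field setting truly requires more than the prime case of Bourgain--Gamburd. I would project $\chi_{S'}^{(l)}$ to $SL_d(\OO_K/P_i) \times SL_d(\OO_K/P_j)$ and show the pushforward assigns small mass to the graph of $\f$. Viewing such a graph as a proper algebraic subgroup of $SL_d \times SL_d$ (over the appropriate extension), the critical input is precisely Zariski density of $\wh\s(\Ga)$ in $SL_d(\C)^r$: via joint strong approximation (in the spirit of Matthews--Vaserstein--Weisfeiler), this density implies that $\pi_{P_iP_j}(\Ga)$ is not contained in any such graph whenever $P_i,P_j$ are coprime to $J$. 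One can then apply the Bourgain--Gamburd framework in the larger group $SL_d \times SL_d$ modulo $P_iP_j$, reducing to the single-prime escape for a proper algebraic subgroup. The hard part is making the exponent $\d$ uniform as the number $k$ of prime factors of $I$ grows: this requires combining the cases via a union bound carefully calibrated against the index of $H$ and the constraint $l\ge\log N(I)$, so that the logarithmic running time suffices simultaneously for all $k$ primes.
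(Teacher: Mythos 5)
Your approach differs fundamentally from the paper's, and there is a circularity at its core. In case (a) you invoke ``the Bourgain--Gamburd machine applied to each $SL_d(\OO_K/P_i)$'' to obtain the escape estimate $\pi_{P_i}[\chi_{S'}^{(l)}](M_i)\ll [SL_d(\OO_K/P_i):M_i]^{-\d}$, but Theorem~\ref{thm_escp} is precisely the \emph{input} to that machine: the $\ell^2$-flattening step (Theorem~\ref{thm_prod} together with the iteration in Section~\ref{sec_proof}) requires a non-concentration estimate of exactly this form as a hypothesis. The non-concentration must instead be extracted from the random walk on the infinite group $\Ga$ itself. The paper's key device --- which you do not mention --- is the archimedean lift $\LL_\d(H)$: for $l\ll\log N(I)$ elements of $\prod_l S'$ have small operator norm and are therefore uniquely determined by their reduction mod $I$, so $\pi_I[\chi_{S'}^{(l)}](H)$ equals the $\chi_{S'}^{(l)}$-mass of a genuine subset of $SL_d(\OO_K)$. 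Proposition~\ref{prp_alg} (Nori's structure theorems plus an effective Nullstellensatz to pass from solutions mod many primes to a solution over $\C$) shows this set lies in a subgroup $H_V$ or $H_T$ of $SL_d(\OO_K)$ cut out archimedeanly, and Propositions~\ref{prp_subspc} and~\ref{prp_trans} then give exponential escape via ping-pong. You correctly identify the graph-subgroup case as the real novelty and that Zariski density in $SL_d(\C)^r$ is what kills the intertwiner, but strong approximation \`a la Matthews--Vaserstein--Weisfeiler only yields surjectivity of $\pi_{P_iP_j}(\Ga)$; it supplies no quantitative escape rate.

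Two further gaps. First, reducing to maximal subgroups loses the dependence on the index that the theorem requires: if $H<H_{\max}$ then $[G:H_{\max}]^{-\d}\ge[G:H]^{-\d}$, so a bound in terms of $[G:H_{\max}]$ is strictly weaker than the claim, and in a product of $n$ simple factors the index of $H$ can be exponentially larger than that of any maximal subgroup containing it. The paper instead handles arbitrary proper $H$ directly, passing to a \emph{smaller} subgroup $H^\sharp<H$ with $[H:H^\sharp]<C^n$ and tracking that factor through Lemma~\ref{lem_subprod} and the final Cauchy--Schwarz step. Second, the set $S'$ actually has to be constructed: the paper takes high powers $g^M$ of generic elements built via Goldsheid--Margulis proximality so that the representations $\rho_1\oplus\rho_2$ are in ping-pong position as demanded by Lemmas~\ref{lem_qp} and~\ref{lem_free}. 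The Tits alternative alone produces a free subgroup of $\Ga$, but not one with the specific simultaneous proximality in the two archimedean embeddings that is needed to run Lemma~\ref{lem_alg} against every candidate $T$.
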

If we know that $g\in\prod_{c\log N(I)}S$, where $c$ is a small constant
depending on $S$, then $\pi_I(g)$ determines $g$ uniquely.
In section \ref{sec_escp}, using Nori's \cite{Nor}
results we give a geometric description of the elements of
$\prod_{c\log N(I)}S$ whose projection modulo $I$ belong to $H$,
this will be a certain subgroup of $SL_d(\OO_K)$.
Then we will prove that the probability for the random walk on
$\GG(\Ga,S)$ to be in this subgroup decays exponentially in the number
of steps we take.
(Actually, first we need to replace $S$ by another set
$S'\subset\Ga$.)
The proof of this is based on a ping-pong argument.

The second part of the proof begins with the following observation.
If we apply Theorem \ref{thm_escp} for $H=\{1\}$, then we already
get
\be
\label{eq_almostthere}
\|\pi_I[\chi_{S'}^{(\log N(I))}]\|\ll \|SL_d(\OO_K/I)\|^{-\d/2}.
\ee
Now working on the quotient $SL_d(\OO_K/I)$,
we can improve on (\ref{eq_almostthere}), if
we take the convolution of 
$\pi_{I}[\chi_{S'}^{(\log N(I))}]$ with itself.
More precisely we prove in section \ref{sec_prod} the following 
\begin{thm}
\label{thm_prod}
Let $G$ be a group satisfying the assumptions (A0)--(A5)
listed in section \ref{sec_prod}.
Then for any $\e>0$, there is some $\d>0$ depending only on
$\e$ and the constants appearing in assumptions (A0)--(A5)
such that the following holds.
If $\mu$ and $\nu$ are probability measures on $G$ such that
$\mu$ satisfies
\[
\|\mu\|_2>|G|^{-1/2+\e}\quad{\rm and}\quad
\mu(gH)<[G:H]^{-\e}
\]
for any $g\in G$ and for any proper subgroup $H<G$, then
\[
\|\mu*\nu\|_2<\|\mu\|_2^{1/2+\d}\|\nu\|_2^{1/2}.
\]
\end{thm}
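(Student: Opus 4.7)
The plan is to establish the theorem by the Bourgain--Gamburd $L^2$-flattening strategy: argue by contradiction, feed the conclusion into a non-commutative Balog--Szemer\'edi--Gowers extraction, and close the loop with the product theorem packaged into the assumptions (A0)--(A5). Suppose $\mu,\nu$ satisfy the hypotheses but $\|\mu*\nu\|_2\geq\|\mu\|_2^{1/2+\d}\|\nu\|_2^{1/2}$, with $\d>0$ to be chosen small in terms of $\e$ and the constants in (A0)--(A5); the aim is to derive a contradiction with one of the two standing hypotheses on $\mu$.

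First I would perform a dyadic pigeonholing on the level sets of $\mu$ and $\nu$, losing factors of $(\log|G|)^{O(1)}$ that can be absorbed into the final parameter balance, and replace them by the normalized indicator functions of sets $A,B\subset G$ with $|A|\asymp\|\mu\|_2^{-2}$, $|B|\asymp\|\nu\|_2^{-2}$ and $\mu(A),\nu(B)\gea 1$. A direct computation then gives
\[
\|\mu*\nu\|_2^2 \;\asymp\; \frac{E(A,B)}{|A|^2|B|^2}, \qquad E(A,B)=\sum_g \bigl|\{(a,b)\in A\times B:ab=g\}\bigr|^2,
\]
so the hypothesized lower bound becomes $E(A,B)\gea |A|^{3/2-O(\d)}|B|^{3/2}$ (which, incidentally, forces $|A|\asymp|B|$ up to $|A|^{O(\d)}$). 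A non-commutative Balog--Szemer\'edi--Gowers argument, in the form used in \cite{BG1}, then extracts a subset $A'\subset A$ with $|A'|\gea|A|^{1-O(\d)}$ and small tripling $|\prod_3 A'|\lea|A'|^{1+O(\d)}$.

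The product-theorem hypothesis (A5), modeled on (\ref{eq_helfgott}), now yields a clean dichotomy: either $|A'|\ge|G|^{1-\eta}$, or $A'$ is contained in a proper subgroup $H<G$, where $\eta\to 0$ as $\d\to 0$. Both alternatives contradict a hypothesis on $\mu$. In the first, $\|\mu\|_2\lea|A|^{-1/2}\lea|G|^{-1/2+\eta/2}$, violating $\|\mu\|_2>|G|^{-1/2+\e}$ as soon as $\eta<2\e$. In the second, $A'$ accounts for almost all of $A$, so there is a coset $gH$ with $\mu(gH)\gea 1$, violating $\mu(gH)<[G:H]^{-\e}\leq 2^{-\e}$ for any proper $H$. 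Choosing $\d$ sufficiently small in terms of $\e$ and the parameters of (A0)--(A5) closes the argument.

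The main obstacle is the non-commutative Balog--Szemer\'edi--Gowers step, which is considerably more delicate than its abelian counterpart and relies on the Tao-style variant together with the Ruzsa calculus adapted to non-abelian groups; without this, passing from an $L^2$-bound on $\mu*\nu$ to a small-tripling set is not routine. A secondary bookkeeping task is tracking how the $O(\d)$ losses in the energy identity, the BSG extraction, and the product theorem compose, so that the eventual $\d$ depends only on $\e$ and on the constants in (A0)--(A5)---in particular, the precise quantitative shape of (A5) governs this dependence.
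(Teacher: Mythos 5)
Your high-level architecture matches the paper's: argue by contradiction, pigeonhole dyadically, run a non-commutative Balog--Szemer\'edi--Gowers extraction (this is precisely Lemma~\ref{lem_bszg} in the paper, applied with $K=\|\mu\|_2^{-\d}$), obtain a symmetric set $S$ with small tripling and with $\chi_S(gH)$ controlled by $\mu(gH)$, and then contradict a product theorem. So the reduction step in section~\ref{sec_th3} is correctly identified, and the final two alternatives (big set vs.\ concentration on a subgroup coset) correctly map back onto the two hypotheses on $\mu$.

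The genuine gap is your step ``The product-theorem hypothesis (A5)\ldots now yields a clean dichotomy.'' This is not a hypothesis: none of (A0)--(A5) asserts a product theorem for $G$. Assumption (A4) is the $L^2$-flattening statement for the \emph{individual factors} $G_i$, and (A5) is only a structural assumption on a filtration of subgroup classes of $G_i$ (a notion of ``dimension''); neither provides a Helfgott-type tripling bound for the direct product $G=G_1\times\cdots\times G_n$. That product theorem is exactly Proposition~\ref{prp_prod}, and establishing it is the entire content of sections~\ref{sec_prp14}--\ref{sec_ld}: the tree/regularization of $S$, the entropy argument of Proposition~\ref{prp_sd} which exploits (A4) factor by factor after conditioning on random translates (Lemmata~\ref{lem_B} and~\ref{lem_lambda}, where (A5) is really used), and the large-degree case Proposition~\ref{prp_ld} via Gowers' quasirandomness and Farah's theorem on approximate homomorphisms. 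Your appeal to a ``clean dichotomy'' for $G$ hides all of this. Moreover, even in form the dichotomy you state is too coarse for a direct product: a set may have small tripling yet be neither almost all of $G$ nor contained in (a coset of) any single proper subgroup in the naive sense --- e.g.\ a product of a proper subgroup of $G_1$ with the full remaining factors --- which is why the correct formulation (Proposition~\ref{prp_prod}) is the weighted coset bound $\chi_S(gH)<[G:H]^{-\e}|G|^\d$ rather than a containment statement. Without proving Proposition~\ref{prp_prod} (or an equivalent), the argument does not close.
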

Assumptions (A0)--(A5) are too technical so we do not list
them here in the introduction.
Among other things, we assume that $G$ is the direct product of such 
quasi-simple
groups that also satisfy the conclusion of the theorem.
To prove the latter for the groups $SL_d(\OO_K/P)$ we need
(\ref{eq_helfgott}), and this is the reason why we have Theorem
\ref{thm_main} only in the cases, when (\ref{eq_helfgott}) is available.
The quasi-simplicity of the factors is a severe restriction, for example
it excludes factors of the form $SL_d(\OO_K/P^k)$, where $P$ is a
prime ideal.
Therefore a new idea is needed to prove Theorem \ref{thm_main} for
general ideals.

A similar result for $G=SL_2(\Z/q\Z)$, $q$ square-free
(under stronger hypothesis on $\mu$)
is given by Bourgain, Gamburd and Sarnak \cite[Proposition 4.3]{BGS}.
They use an argument similar to Helfgott's \cite{Hel} to reduce it
to a so-called sum-product theorem for the ring $\Z/q\Z$.
Then they prove the latter by reducing it to the case of $\Z/p\Z$,
$p$ prime.
The difference in our approach is that we use Helfgott's theorem
as a black box, and extend it to the case of square-free modulus
in a way that very much resembles the proof given in
\cite[section 5]{BGS}
for the sum-product theorem.

\bigskip
{\bf Acknowledgement.}
I am very grateful to my advisor, Jean Bourgain for suggesting this
problem and for guiding me during my research.
I also had very useful discussions with Elon Lindenstrauss, Alireza Salehi Golsefidy
and Peter Sarnak, I thank them for their interest and for their valuable
remarks.

While writing this paper, I was supported by a
Fulbright Science and Technology Award, grant no. 15073240.

\section{Escape of mass from subgroups}
\label{sec_escp}

We prove Theorem \ref{thm_escp} in this section.
First we note that we may assume that $I$ is a principal
ideal generated by a square-free rational integer $q$.
Indeed, there is always a square-free rational integer $q\in I$
such that $q\le N(I)$.
Let $\wh H$ be the preimage of $H$ under the projection
$SL_d(\OO_K/(q))\to SL_d(\OO_K/I)$.
Then we have $\log N((q))\ge\log N(I)\ge \log N((q))/r$ and
$[SL_d(\OO_K/I):H]=[SL_d(\OO_K/(q)):\wh H]$.
Hence
the claim of the theorem for $I$ and $H$ follows from
the claim for $(q)$ and $\wh H$.
In what follows we assume that $I=(q)$ and write $\pi_q=\pi_{(q)}$.
Let $q=p_1\ldots p_n$ be the prime factorization of $q$ and assume
without loss of generality that none of the $p_i$ ramify in $K$.

For $g\in SL_d(\C)$ denote by $\|g\|$ the operator norm of $g$
with respect to the $l^2$ norm on $\C^d$.
If $\|g\|<\sqrt{q}/2$ for some $g\in SL_d(\OO_K)$, then
clearly $\|g'\|>\sqrt{q}/2$ for any other $g'\in SL_d(\OO_K)$
with $\pi_q(g)=\pi_q(g')$ since $\|g\|\ge\sqrt{q}$ if $\pi_q(g)=0$
and $g\neq0$.
Hence elements of small norm are determined uniquely by their
projections modulo $q$.
The first step towards the proof of Theorem \ref{thm_escp}
is to study when the projection of an element of small norm
belong to $H$, i.e. we study the set
\[
\LL_\d(H):=\{h\in SL_d(\OO_K)\sep\pi_q(h)\in H,\;
\|\wh\s(h)\|<[SL_d(\OO_K/(q)):H]^\d\}
\]
for $\d>0$ and for $H<SL_d(\OO_K/(q))$.

By Weil restriction of scalars, we consider $SL_d(K)$ as
the $\Q$--points of an algebraic group.
To fix notation, we describe this process in detail.
Let $e_1,\ldots,e_r$ be an integral basis of $\OO_K$.
Multiplication by an element $a\in K$ is an endomorphism
of the $\Q$--vectorspace $K$.
This gives rise to an embedding $\a:K\to Mat_r(\Q)$ onto
a subalgebra of $Mat_r(\Q)$ which is defined by linear
equations over $\Q$.
Thus there is an algebraic subgroup $\G$ of $SL_{dr}$
defined over $\Q$ such that $SL_d(K)$ is isomorphic
to $\G(\Q)$ as an abstract group, we denote this isomorphism by $\a$ as well.
Moreover, we have $\a(SL_d(\OO_K))=\G(\Q)\cap SL_{dr}(\Z)$.
To shorten notation, we write $\G(\Z)=\G(\Q)\cap SL_{dr}(\Z)$.
The image of $e_1,\ldots, e_r$ under $\pi_q$
is a basis of the $\Z/q\Z$--module $\OO_K/(q)$, hence
$\a$ induces an isomorphism from $SL_d(\OO_K/(q))$
to $\G(\Z/q\Z)$.
Denote by $\fg$ the Lie-algebra of $\G$, then $\fg(\Q)$
is a subspace of $Mat_{dr}(\Q)$ defined by (linear)
polynomials $\f_1,\ldots,\f_{d^2r^2-r(d^2-1)}\in\Z[x]$.
If $p$ is a prime which does not ramify in $K$, then
we can write $(p)=P_1\ldots P_k$ with different prime
ideals $P_i$.
Then $\G(\Z/p\Z)$ is isomorphic to
$SL_d(\OO_K/P_1)\times\cdots\times SL_d(\OO_K/P_k)$.

\begin{prp}
\label{prp_alg}
There are constants $C$ and $\d$ depending only on $K$
such that the following holds.
For any subgroup $H<SL_d(\OO_K/(q))$, there are
$u,v\in\fg(\C)$ and there is a subgroup $H^{\sharp}<H$
with $[H:H^{\sharp}]<C^n$,
such that if $h\in\LL_\d(H^\sharp)$
then
\[
\Tr(\a(h)u\a(h)^{-1}v)=0,
\]
but there is some $g_0\in\G(\Q)$ such that $\Tr(g_0ug_0^{-1}v)=1$.
\end{prp}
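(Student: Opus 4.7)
The plan is to apply Nori's structure theorem \cite{Nor} to the image of $H$ at each rational prime dividing $q$, producing algebraic subgroups of bounded complexity, and then to exploit the integrality of $\a(h)$ together with a geometry-of-numbers argument to build the desired trace polynomial.

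First I reduce to $q \in \Z$ square-free (as done at the start of this section) and factor $q = p_1 \cdots p_n$, so that $\G(\Z/q\Z) \cong \prod_j \G(\F_{p_j})$. Let $H_j$ denote the image of $H$ under the $j$-th projection. Applying Nori's theorem to $H_j \subset \G(\F_{p_j}) \subset SL_{dr}(\F_{p_j})$ produces a subgroup $H_j^\sharp \subset H_j$ of index bounded by a constant $C_0=C_0(d,r)$ and an algebraic subgroup $\mathbf{N}_j \subset \G$ defined over $\F_{p_j}$ by polynomials of bounded degree, with $H_j^\sharp \subset \mathbf{N}_j(\F_{p_j})$. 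Setting $H^\sharp$ to be the preimage in $H$ of $\prod_j H_j^\sharp$ gives $[H:H^\sharp] \leq C_0^n$ as required.

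To construct $(u,v)$, call $j$ \emph{good} if $\mathbf{N}_j \subsetneq \G$. For non-good $j$ outside a fixed finite bad set of primes one has $H_j^\sharp = \G(\F_{p_j})$ (since $\G(\F_{p_j})$ is generated by its $p_j$-elements), so these primes impose no constraint; let $J$ be the set of good primes and set $Q = \prod_{j \in J} p_j$. For each $j \in J$, the condition that $\phi_{u,v}(g) := \Tr(g u g^{-1} v)$ vanish on $\mathbf{N}_j$ defines a linear subspace $V_j \subset \fg(\F_{p_j}) \otimes \fg(\F_{p_j})$ whose codimension is bounded by a constant $c_0 = c_0(d,r)$, using that the $\mathbf{N}_j$ belong to a bounded family of types and that $\phi_{u,v}$ is a polynomial on $\G$ of bounded degree. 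The $\Z$-lattice consisting of $(u,v) \in \fg(\Z) \otimes \fg(\Z)$ whose reduction modulo each $p_j$, $j \in J$, lies in $V_j$ has index at most $Q^{c_0}$, so by Minkowski's theorem it contains a nonzero lattice point with $\|u\|,\|v\| \ll Q^{c_0/(\dim \fg)^2}$. A generic choice within this sublattice ensures that $\phi_{u,v}$ is not identically zero on $\G$, so that some $g_0 \in \G(\Q)$ gives $\Tr(g_0 u g_0^{-1} v) = 1$ after rescaling. Now, for $h \in \LL_\d(H^\sharp)$, the value $\phi_{u,v}(\a(h))$ is a rational integer. The norm bound on $\a(h)$ together with the height bound on $(u,v)$ yield $|\phi_{u,v}(\a(h))| \ll Q^{2c_0/(\dim \fg)^2} \cdot [\G(\Z/q\Z):H^\sharp]^{C\d}$ for some $C=C(d,r)$, while the vanishing mod $p_j$ for $j \in J$ gives $\phi_{u,v}(\a(h)) \equiv 0 \pmod Q$. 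For $\d$ small enough (depending on $d,r,c_0$) the first bound is strictly less than $Q$, forcing $\phi_{u,v}(\a(h)) = 0$.

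The main obstacle is controlling $c_0$ relative to $(\dim \fg)^2$: for certain proper algebraic subgroups $\mathbf{N}_j$ such as Borel subgroups in low rank, a direct calculation gives a codimension of $V_j$ comparable to $(\dim \fg)^2$, and the naive Minkowski estimate above does not suffice. Circumventing this requires exploiting Nori's finiteness of subgroup types more carefully, for instance by pigeonholing the $n$ primes according to the isomorphism class of $\mathbf{N}_j$ and constructing $(u,v)$ adapted to the single prevailing type; one then gets much smaller effective codimensions by working inside a distinguished subrepresentation of $\fg \otimes \fg$ associated to that type, at the cost of losing a constant factor in $n$ in the index bound $[H:H^\sharp] \leq C^n$.
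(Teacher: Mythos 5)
Your approach differs fundamentally from the paper's. The paper never constructs $u,v$ directly: it writes down the system (\ref{eq_poly}) of polynomial constraints, shows that for each good prime $p$ some $g_0$ from a fixed generating set makes the system solvable over $\F_p$, and then argues by contradiction — if no solution exists over $\C$, the effective Bezout identities of Berenstein--Yger produce an integer $0<D<q^{c\d}$ expressible as a $\Z[X,Y]$-combination of the defining polynomials, and substituting the $\F_p$-solutions forces $q_{g_0}\mid D$, which is absurd for small $\d$. Thus $u,v$ only need to exist over $\C$, and their heights play no role. You instead try to build small \emph{integral} $u,v$ via Minkowski's theorem, which is conceptually cleaner but runs into several genuine obstructions that the Nullstellensatz route sidesteps.

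Concretely, three things break. First, the lattice you set up lives in $\fg\otimes\fg$ (the constraint $\Tr(gug^{-1}v)=0$ is bilinear, so it is only linear if you pass to the tensor product). Minkowski then hands you a short \emph{tensor}, not a short decomposable one, so you do not actually obtain a pair $u,v$; in $\fg\times\fg$ the set $V_j$ is not a subspace and the lattice argument does not start. Second, even granting a rank-one short vector, you cannot rescale it to enforce $\Tr(g_0ug_0^{-1}v)=1$ without leaving the integers — the normalization constant may be a large denominator, which wrecks the divisibility step on which your conclusion $\phi_{u,v}(\a(h))=0$ rests. Third, the codimension issue you flag at the end is fatal as stated: for subgroup types such as Borels, the codimension of $V_j$ is a positive proportion of $\dim(\fg\otimes\fg)$, so the Minkowski exponent $c_0/(\dim\fg)^2$ is bounded away from $0$, and $Q^{2c_0/(\dim\fg)^2}$ is not $o(Q)$. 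Your proposed fix via pigeonholing by isomorphism class is not worked out and, more to the point, changes the architecture of the proof so substantially that it would need to be carried through in detail. You also skip the paper's preliminary step (Lemma \ref{lem_subprod}) that discards the primes at which $\pi_p(H)=\G(\F_p)$ while controlling the loss in $[G:H]$; without that, there is no guarantee that your $Q$ is comparable in size to $[G:H]$, which your final inequality tacitly requires. The moral is that the effective-Bezout argument was chosen precisely because it converts the existence problem over $\C$ into an arithmetic statement with no need to control heights or codimensions of the $V_j$.
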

In what follows we often write $\F_{p^m}$
for the finite field of order $p^m$.
Recall that $n$ is the number of prime factors of $q$ and
$q=p_1\cdots p_n$.
Then $\G(\Z/q\Z)=\G(\F_{p_1})\times\ldots\times\G(\F_{p_n})$.
For $q_1|q$, denote by
\[
\pi_{q_1}:\G(\Z/q\Z)\to\times_{p|q_1}\G(\F_p)
\]
the projection
to the product of direct factors corresponding to the
prime factors of $q_1$.
Fix a proper subgroup $H<\G(\Z/q\Z)$ and
denote by $q_1$ the product of all primes $p|q$ for which
$\pi_{p}(H)=\G(\F_p)$.
In the course of the proof we will replace $q$ by $q/q_1$
and $H$ by $\pi_{q/q_1}(H)$.
We need to show that $[\G(\Z/(q/q_1)\Z):\pi_{q/q_1}(H)]$
is not much smaller than $[\G(\Z/q\Z):H]$.
For this we first give
\begin{lem}
\label{lem_univalg}
Let $p_1$ and $p_2$ be two different primes and assume
that $N\lhd H<SL_{d}(\F_{p_2^{m_1}})$ such that
$H/N$ is isomorphic to $PSL_{d}(\F_{p_1^{m_2}})$
with some integers $m_1$, $m_2$.
Then
\[
p_1|\prod_{i=2}^{d}(p_2^{im_1}-1),
\]
in particular, for a fixed $p_2$ the product of all primes, which
can arise as $p_1$, is at most $p_2^{d^2m_1}$.
\end{lem}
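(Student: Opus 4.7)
The plan is to argue entirely from orders via Lagrange's theorem and the standard formula for $|SL_d(\F_{q})|$. Since $H < SL_d(\F_{p_2^{m_1}})$ and $N \lhd H$, Lagrange gives that $|H/N|$ divides
\[
|SL_d(\F_{p_2^{m_1}})| = p_2^{m_1 d(d-1)/2}\prod_{i=2}^{d}(p_2^{im_1}-1).
\]
On the other hand, since $H/N \cong PSL_d(\F_{p_1^{m_2}})$, the order
\[
|H/N| = \frac{1}{\gcd(d, p_1^{m_2}-1)}\,p_1^{m_2 d(d-1)/2}\prod_{i=2}^{d}(p_1^{im_2}-1)
\]
is divisible by $p_1$ (the factor $p_1^{m_2 d(d-1)/2}$ is a positive power of $p_1$, using $d \ge 2$, and the denominator is coprime to $p_1$).

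Combining these two facts, $p_1$ divides $|SL_d(\F_{p_2^{m_1}})|$. Since $p_1 \neq p_2$, the prime $p_1$ cannot contribute to the factor $p_2^{m_1 d(d-1)/2}$, so it must divide $\prod_{i=2}^{d}(p_2^{im_1}-1)$, proving the first claim.

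For the ``in particular'' statement, any collection of distinct primes dividing an integer $M$ has product at most $M$ (the radical bound). Hence the product of all primes $p_1$ that can arise for fixed $p_2$ is bounded by
\[
\prod_{i=2}^{d}(p_2^{im_1}-1) < \prod_{i=2}^{d}p_2^{im_1} = p_2^{m_1(2+3+\cdots+d)} = p_2^{m_1(d(d+1)/2 - 1)} \le p_2^{d^2 m_1},
\]
the last inequality following from $d(d+1)/2 - 1 \le d^2$ for all $d \ge 2$. There is no real obstacle here; the lemma is a bookkeeping observation about orders, and the only thing to be careful about is to note that the $p_1$-power in $|PSL_d(\F_{p_1^{m_2}})|$ is genuinely positive, which uses $d \ge 2$.
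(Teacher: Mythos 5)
Your proof is correct and follows essentially the same line as the paper's: the paper notes that $PSL_d(\F_{p_1^{m_2}})$ has an element of order $p_1$ so $p_1$ divides $|SL_d(\F_{p_2^{m_1}})|$, which is the same order/Lagrange bookkeeping you carry out, just phrased via an explicit element rather than the order formula for $PSL_d$. Both arguments then conclude in the same way by noting $p_1\neq p_2$ forces $p_1$ into the factor $\prod_{i=2}^{d}(p_2^{im_1}-1)$.
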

\begin{proof}
As $PSL_{d}(\F_{p_1^{m_2}})$ has an element of order $p_1$ and
the order of $SL_{d}(\F_{p_2^{m_1}})$ is
$p_2^{m_1d(d-1)/2}\prod_{i=2}^{d}(p_2^{im_1}-1)$,
the assertion is clear.
\end{proof}

\begin{lem}
\label{lem_subprod}
Let $H$ be a subgroup of $G=\G(\Z/q\Z)$ and denote
by $q_1$ the product of primes $p|q$ with $\pi_p(H)=\G(\F_p)$
and set $q_2=q/q_1$.
There is a subgroup $H_2<\G(\Z/q_2\Z)$ of the form
$\times_{p|q_2}H_p$, where each $H_p$ is a proper
subgroup of $\G(\F_p)$ such that $\pi_{q_2}(H)<H_2$ and
\[
[\G(\Z/q_2\Z):H_2]>[G:H]^c
\]
with a constant $c$ depending only on $d$ and $r$.
\end{lem}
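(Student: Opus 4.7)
The plan is as follows. Set $H_p := \pi_p(H)$ for each $p|q$. Then $H_2 := \prod_{p|q_2}H_p$ is a direct product inside $\G(\Z/q_2\Z)$, each factor is a proper subgroup of $\G(\F_p)$ by the very definition of $q_1$, and $\pi_{q_2}(H) \leq H_2$ holds automatically. A short counting identity gives
\[
[G:H] \;=\; D \cdot [\G(\Z/q_2\Z):H_2],
\]
where $D := \prod_{p|q}|H_p|/|H| \geq 1$ is the subdirect defect of $H$. The required inequality $[\G(\Z/q_2\Z):H_2] \geq [G:H]^c$ therefore reduces to $\log D \leq \frac{1-c}{c}\log[\G(\Z/q_2\Z):H_2]$, and I would focus entirely on bounding $D$.

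To factor $D$ into pieces, I would enumerate the primes $p_1,\ldots,p_n$ dividing $q$ and apply Goursat's lemma inductively: at step $i$, the image $\pi_{p_1\cdots p_i}(H)$ sits as a subdirect subgroup of $\pi_{p_1\cdots p_{i-1}}(H) \times H_{p_i}$, producing a finite group $Q_i$ which is simultaneously a quotient of $H_{p_i}$ and of $\pi_{p_1\cdots p_{i-1}}(H)$, with $D = \prod_i |Q_i|$. Each $Q_i$ is therefore a common section of $\G(\F_{p_i})$ and of $\prod_{j<i}\G(\F_{p_j})$. Using that $\G(\F_p) = \prod_m SL_d(\F_{p^{e_m}})$ is a product of quasi-simple groups, any non-abelian composition factor of $Q_i$ has the form $PSL_d(\F_{p_i^e})$ and must also be a composition factor on the $\prod_{j<i}\G(\F_{p_j})$ side; Lemma \ref{lem_univalg} then constrains the involved primes to a finite exceptional set depending only on $d$ and $r$. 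The remaining (essentially abelian/torus) contributions to $|Q_i|$ are bounded via the order formulas for $SL_d$, involving divisors of $\prod_{k=2}^d(p_j^{ke_m}-1)$.

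To close the estimate, each $|Q_i|$ is charged either against an index $[\G(\F_{p_k}):H_{p_k}] > 1$ with $p_k|q_2$ (contributing directly to $\log[\G(\Z/q_2\Z):H_2]$), or against a bounded absolute constant coming from the finitely many configurations allowed by Lemma \ref{lem_univalg}. Aggregating these bounds yields $D \leq [\G(\Z/q_2\Z):H_2]^{(1-c)/c}$ with $c = c(d,r) > 0$, which completes the argument. The main obstacle I anticipate is the pure $q_1$ case: if a non-trivial $Q_i$ arises from common sections involving only primes in $q_1$, there is no local index greater than $1$ to charge against. The resolution should rely on the near-perfection of $\G(\F_p)$ (its abelian quotient is contained in the centers of the $SL_d(\F_{p^{e_m}})$ factors, of aggregate size at most $d^r$) combined with Lemma \ref{lem_univalg}, forcing such bad contributions to be absolutely bounded and absorbed into the constant $c$.
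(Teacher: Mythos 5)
Your identity $[G:H] = D\cdot[\G(\Z/q_2\Z):H_2]$ with $D=\prod_{p|q}|\pi_p(H)|/|H|$ is correct and is a genuinely different starting point from the paper's: rather than try to control the subdirect defect $D$, the paper simply shows $q_1 < q_2^{rd^2}$ and then uses the crude chain $[\G(\Z/q_2\Z):H_2] > q_2 > q^{1/(rd^2+1)} > [G:H]^c$ (using $[G:H]\le|G|\ll q^{(d^2-1)r}$ and $[\G(\F_p):H_p]>p$ for each $p\mid q_2$). The bound $q_1<q_2^{rd^2}$ is obtained by applying Goursat \emph{once per prime} $p_1\mid q_1$ to $\pi_{p_1}(H)\times\pi_{q/p_1}(H)\supset H$, producing a nontrivial common quotient $N$ of the full group $\G(\F_{p_1})$ and of $\pi_{q/p_1}(H)$, and then locating a prime $p_2$ with a nontrivial image $N_{p_2}$; the observation that $\G(\F_{p_1})$ and $\G(\F_{p_2})$ share no nontrivial common quotient at distinct characteristics then forces $p_2\mid q_2$, and Lemma~\ref{lem_univalg} bounds the $p_1$'s entangled with a given $p_2$.

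Two concrete gaps in your sketch. First, the claim that every non-abelian composition factor of $Q_i$ has the form $PSL_d(\F_{p_i^e})$ is only valid when $Q_i$ is a quotient of the \emph{full} $\G(\F_{p_i})$, i.e.\ when $p_i\mid q_1$; for $p_i\mid q_2$, $Q_i$ is a quotient of a proper subgroup $H_{p_i}$, whose sections can have essentially arbitrary composition factors, so Lemma~\ref{lem_univalg} does not apply there (though this is harmless for $p_i\mid q_2$ since you can instead charge $|Q_i|\le|\G(\F_{p_i})|<p_i^{(d^2-1)r}<[\G(\F_{p_i}):H_{p_i}]^{(d^2-1)r}$ trivially). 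Second, your ``main obstacle'' — the pure $q_1$ case — is precisely the crux, and ``near-perfection'' gestures at the right idea but doesn't close it: what is actually true, and what the paper uses, is that perfection of $\G(\F_p)$ together with the non-isomorphism of $PSL_d$ at distinct characteristics implies there is \emph{no} entanglement confined to $q_1$-primes, so that case contributes nothing. Moreover, your inductive Goursat decomposition depends on the ordering of the primes, and the accounting that charges each $|Q_i|$ with $p_i\mid q_1$ against a $q_2$-prime (without over-charging the same $p_2$) is exactly what the paper's Lemma~\ref{lem_univalg} with its ``product of all primes $p_1$'' formulation accomplishes; you would need to spell this out before the aggregation step is sound.
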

\begin{proof}
If for some $p|q_1$, $\G(\F_p)$ is a direct factor of $H$
then
\[
[\G(\Z/(q/p)\Z):\pi_{q/p}(H)]=[G:H],
\]
hence we can assume without loss of generality
that there is no such prime.
We show that for each $p_1|q_1$, there is some $p_2|q_2$
such that the conditions of the previous lemma are satisfied.
This will yield a bound on $q_1$.
Set $q'=q/p_1$
By Goursat's Lemma, there is a nontrivial group $N$ and surjective
homomorphisms
\[
\f:\pi_p(H)=\G(\F_{p_1})\to N, \qquad
\psi:\pi_{q'}(H)\to N.
\]
For each factor $p|q'$, $\psi$ gives rise to a surjective homomorphism
\[
\psi_p:\pi_p(H)\to N_p=N/\{\psi(h)\sep h\in \pi_{q'}(H),\;\pi_p(h)=1\}
\]
in the obvious way.
Since the intersection of all the subgroups
$\{\psi(h)\sep h\in \pi_{q'}(H),\;\pi_p(h)=1\}$
is trivial, there is a prime $p_2$ for which $N_{p_2}$
is nontrivial.
As $\G(\F_{p_1})$ and $\G(\F_{p_2})$ has no nontrivial common
factors, $p_2|q_2$.
%
It is clear that $p_1$ and $p_2$ satisfy the conditions of Lemma
\ref{lem_univalg}, whence $q_1<q_2^{rd^2}$.

For each $p|q_2$ let $H_p$ be a proper subgroup
of $\G(\F_p)$ containing $\pi_p(H)$.
Since $\G(\F_p)$ is generated by its subgroups isomorphic to
$SL_2(\F_p)$, there must be at least one such subgroup
which is not contained in $H_p$.
Any proper subgroup of $SL_2(\F_p)$ is of index at least $p+1$,
hence $[\G(\F_p):H_p]>p$.
This shows that for $H_2=\times_{p|q_2}H_p$, we have
\[
[\G(\Z/q_2\Z):H_2]>q_2> q^{1/(d^2r+1)}>[G:H]^c.
\]
\end{proof}

The proof of Proposition \ref{prp_alg} is based on the description of
subgroups of $GL_{d}(\F_p)$ given by Nori \cite{Nor}
that we recall now.
Let $H$ be a subgroup of $GL_{d}(\F_p)$
and denote by
$H^+$ the subgroup of $H$ generated by its elements of order $p$.
\cite[Theorem B]{Nor} states that if $p$ is
bigger than a constant depending only on $d$, then there is
a connected algebraic subgroup $\wt H$ of $GL_d$ defined
over $\F_p$ such that $H^+=\wt H(\F_p)^+$.
Denote by $\fh$ the Lie algebra of $\wt H$, and
define $\exp$ and $\log$ by
\[
\exp(z)=\sum_{i=0}^{p-1}\frac{z^i}{i!}\qquad{\rm and}\qquad
\log(z)=-\sum_{i=1}^{p-1}\frac{(1-z)^i}{i}
\]
for $z\in Mat_{d}(\F_p)$.
Then for $p$ large enough,
$\exp$ and $\log$ sets up a one to one correspondence
between elements of order $p$ of $H^+$ and nilpotent elements
$\fh(\F_p)$ by \cite[Theorem A]{Nor}.
Moreover $\fh(\F_p)$ is spanned by its nilpotent elements.
To understand subgroups not generated by the elements
of order $p$, we will use \cite[Theorem C]{Nor}
which asserts that if $p\ge d$, then
there is a commutative subgroup $F<H$ such that
$FH^+$ is a normal subgroup of $H$ and its index $[H:FH^+]$
is bounded in terms of $d$.

\begin{proof}[Proof of Proposition \ref{prp_alg}]
We follow the argument in \cite[Proposition 4.1]{BG3}.
Recall that $H$ is a subgroup of $SL_d(\OO_K/(q))$.
Apply Lemma \ref{lem_subprod} to $\a(H)$ to get a modulus $q_2|q$
and a subgroup $H_2<\G(\Z/q_2\Z)$.
Suppose that the proposition holds for $\a^{-1}(H_2)$ and for an
$H_2^\sharp<SL_d(\OO_K/(q_2))$ with
$[H_2:\a(H_2^\sharp)]<C^n$.
Set
\[
H^\sharp=\{h\in H\sep \pi_{q_2}(h)\in H_2^\sharp\},
\]
and observe that $[H:H^\sharp]<C^n$ and
$\LL_\d(H^\sharp)\subset\LL_{\d/c}(H_2^{\sharp})$ with the constant
$c$ from Lemma \ref{lem_subprod}.
Therefore, if the proposition holds for $\a^{-1}(H_2)$ and
$H_2^{\sharp}$, it also holds for $H$ and $H^\sharp$.
We assume in what follows that $\a(H)=H_{p_1}\times\ldots\times H_{p_n}$,
where $q=p_1\cdots p_n$ is the prime factorization of $q$
and $H_{p_i}$ is a proper subgroup of $\G(\F_{p_i})$.
For each direct factor $H_{p_i}$, let $H^{\sharp}_{p_i}<H_{p_i}$
be such that $H^{\sharp}_{p_i}/H^+_{p_i}$ is commutative and
$[H_{p_i}:H^{\sharp}_{p_i}]<C$ with a constant $C$
depending on $r$ and $d$, such a subgroup exists by
\cite[Theorem C]{Nor}.
Define $H^\sharp=\a^{-1}(H_{p_1}^{\sharp}\times
\ldots\times H_{p_n}^{\sharp})$.

For each $g\in\G(\Z)$ define the polynomial $\eta_g\in\Z[X,Y]$
with $X=(X_{l,k})_{1\le l,k\le dr}$ and
$Y=(Y_{l,k})_{1\le l,k\le dr}$ by
\[
\eta_g(X,Y)=\Tr(gXg^{-1}Y).
\]
Let $A$ be a fixed set of
generators of $\G(\Z)$ and
fix an element $g_0\in A$.
Consider
the system of equations
\be
\label{eq_poly}
\begin{split}
\f_i(X)&=0\qquad 1\le i\le r^2d^2-r(d^2-1),\\
\f_i(Y)&=0\qquad 1\le i\le r^2d^2-r(d^2-1),\\
\eta_{\a(h)}(X,Y)&=0\qquad{\rm for}\; h\in\LL_\d(H^\sharp),\\
\eta_{g_0}(X,Y)&=1,
\end{split}
\ee
where $\d$ is a small constant depending on $d$ and $r$
to be chosen later.
Recall that $\f_i$ are the polynomials defining the Lie algebra
$\fg$.
The assertion follows once we show that (\ref{eq_poly})
has a solution $X=u,Y=v\in Mat_{rd}(\C)$ for an appropriate
choice of $g_0$.

First we show that for each $p=p_i$, there is at least one
$g_0\in A$ such that (\ref{eq_poly}) has
a solution in $Mat_{dr}(\F_p)$.
We apply the results of \cite{Nor} for $H=H_p$, in particular
let $\wt H$ and $\fh$ be the same
as in the discussion preceding the proof.
Conjugation by an element
$g\in\G(\F_p)$ permutes elements of order $p$ of $H_p^+$
if and only if it permutes nilpotent elements of $\fh(\F_p)$.
Hence $\fh(\F_p)$ is invariant under $g$ in the adjoint representation,
exactly if $g$ is in the normalizer of $H_p^+$.
First we consider the case when $H_p^+$ is not a normal
subgroup of $\G(\F_p)$.
Then there is at least one element $\pi_p(g_0)\in \pi_p(A)$ whose adjoint
action does not leave $\fh(\F_p)$ invariant. 
Let $u\in\fh(\F_p)$ be such that
$\pi_p(g_0)u\pi_p(g_0)^{-1}\notin\fh(\F_p)$
and
let $v\in\fg(\F_p)$ be orthogonal to $\fh(\F_p)$ with respect to
the non-degenerate bilinear form $\langle x,y\rangle=Tr(xy)$
and such that $\Tr(\pi_p(g_0)u\pi_p(g_0^{-1})v)=1$.
This settles the claim.
Now consider the case when $H_p^+\lhd\G(\F_p)$.
If $(p)=P_1\ldots P_k$ is the factorization of $(p)$ over $K$, then
$\G(\F_p)$ is isomorphic to
$SL_d(\OO_K/P_1)\times\cdots\times SL_d(\OO_K/P_k)$,
and $H_p^+$ must be the direct
product of some of these factors.
Consider a direct factor $SL_d(\OO_K/P_i)$ which do not appear
in $H_p^+$ and denote by $N$ the projection of $H_p^\sharp$
to this factor.
There is a Lie subalgebra $\fg_i(\F_p)\subset\fg(\F_p)$ which is isomorphic
to $\fsl_d(\OO_K/P_i)$, invariant and irreducible in the adjoint
representation of $\G(\F_p)$ and the adjoint action of an element
$g\in \G(\F_p)$ on $\fg_i(\F_p )$ is determined by its
projection to the factor $SL_d(\OO_K/P_i)$.
If $N$ is nontrivial denote by $V$ the intersection of the
$\OO_K/P_i$-linear span of $N$ in $Mat_{d}(\OO_K/P_i)$
and the lie algebra $\fg_i(\F_p)$.
If $N$ is trivial, let $V$ be any proper subspace
of $\fg_i(\F_p)$.
Then $V$ is again invariant under $H_p^{\sharp}$ in the adjoint
representation but not under $\G(\F_p)$ and
we can establish the claim the same way as above.

For a particular $g_0\in A$,
denote by $q_{g_0}$ the product of primes $p|q$
for which (\ref{eq_poly}) has a solution over $\F_p$.
As there are only a finite number (and bounded
in terms of $K$) of possibilities for $g_0$,
there is an appropriate choice such that $q_{g_0}>q^c$.
Here and everywhere below $c$ is a constant depending only on $K$
which need not be the same at different occurrences.
Now assume to the contrary that the system (\ref{eq_poly}) has no
solution over $\C$.
We can clearly replace the family of polynomials $\eta_\a(h)$,
$h\in\LL_\d(H^\sharp)$ by a linearly independent
subset of at most $M\le r^4d^4$ elements that we denote
by $\eta_{1},\ldots,\eta_M$.
Note that the coefficients of all the polynomials in
(\ref{eq_poly}) are bounded by $[G:H]^{c\d}<q^{c'\d}$.
Using the effective Bezout identities proved by Berenstein
and Yger \cite[Theorem 5.1]{BeYg} we obtain polynomials
\bean
\psi_1(X,Y),\ldots,\psi_M(X,Y)\in\Z[X,Y],\\
\psi'_1(X,Y),\ldots,\psi'_{r^2d^2-r(d^2-1)}(X,Y)\in\Z[X,Y],\\
\psi''_1(X,Y),\ldots,\psi''_{r^2d^2-r(d^2-1)}(X,Y)\in\Z[X,Y],\\
\psi'''(X,Y)\in\Z[X,Y]
\eean
and a positive integer $0<D<q^{c\d}$
such that
\bean
D&=&\sum_{i=1}^M\eta_i(X,Y)\psi_i(X,Y)\\
&&+\sum_{i=1}^{r^2d^2-r(d^2-1)}\f_i(X)\psi'_i(X,Y)\\
&&+\sum_{i=1}^{r^2d^2-r(d^2-1)}\f_i(Y)\psi''_i(X,Y)\\
&&+(\eta_{g_0}(X,Y)-1)\psi'''(X,Y).
\eean
Substituting the solution of (\ref{eq_poly}) over $\F_p$
for all $p|q_{g_0}$, we see that $q_{g_0}|D$, a contradiction
if $\d$ is small enough.
\end{proof}

\begin{cor}
\label{cor_alg}
There are constants $\d$ and $C$ depending only on $K$, and for
each $H<SL_d(\OO_K/(q))$ there is an $H^\sharp<H$
with $[H:H^\sharp]<C^n$ such that
at least one of the following holds:
\begin{enumerate}
\item
There is an embedding $\s: K\to\C$ and a proper subspace
$V\subset\fsl_d(\C)$ such that if $h\in\LL_\d(H^\sharp)$, then
\be
\label{eq_var1}
\s(h)V\s(h^{-1})=V.
\ee
\item
There are two embeddings $\s_1,\s_2:K\to\C$ and an invertible linear
transformation $T:\fsl_d(\C)\to\fsl_d(\C)$ such that
\be
\label{eq_var2}
T(\s_1(h)v\s_1(h^{-1}))=\s_2(h)T(v)\s_2(h^{-1})
\ee
for any $h\in\LL_\d(H^\sharp)$ and $v\in\fsl_d(\C)$.
\end{enumerate}
\end{cor}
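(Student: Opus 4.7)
The plan is to apply Proposition \ref{prp_alg} at a sufficiently small constant $\d_0$ and then exploit the decomposition $\fg(\C)\cong\bigoplus_{i=1}^r\fsl_d(\C)$ determined by the embeddings $\s_1,\ldots,\s_r$ of $K$ into $\C$. Under this identification $\mathrm{Ad}(\a(h))=\bigoplus_i\mathrm{Ad}(\s_i(h))$ for every $h\in SL_d(\OO_K)$, and the non-degenerate form $B(x,y)=\Tr(xy)$ on $\fg(\C)$ splits as the orthogonal sum of the analogous forms $B_i$ on the factors $\fsl_d(\C)$. Writing $u=(u_1,\ldots,u_r)$ and $v=(v_1,\ldots,v_r)$ for the components of the $u,v\in\fg(\C)$ produced by the proposition, the vanishing condition becomes
\[
\sum_{i=1}^rB_i(\mathrm{Ad}(\s_i(h))u_i,v_i)=0\qquad\text{for all } h\in\LL_{\d_0}(H^\sharp),
\]
while $\sum_iB_i(\mathrm{Ad}(\s_i(g_0))u_i,v_i)=1$ for the exceptional $g_0\in\G(\Q)$.

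Let $N=r(d^2-1)=\dim_\C\fg(\C)$, set $\d=\d_0/N$, and let $U\subset\fg(\C)$ be the linear span of $\mathrm{Ad}(\a(h_1)\cdots\a(h_j))(u)$ over all integers $0\le j\le N$ and all $h_1,\ldots,h_j\in\LL_\d(H^\sharp)$. A product of at most $N$ such elements has operator norm at most $q^{N\d}=q^{\d_0}$ and projects to the subgroup $H^\sharp$, so it belongs to $\LL_{\d_0}(H^\sharp)$; Proposition \ref{prp_alg} then shows $U\subset v^\perp$ with respect to $B$. The chain of subspaces obtained by restricting the product length $j$ is increasing inside an $N$-dimensional space and hence stabilises by $j=N$, so $U$ is in fact $\mathrm{Ad}(\a(h))$-invariant for every $h\in\LL_\d(H^\sharp)$. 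Finally the condition $B(\mathrm{Ad}(\a(g_0))u,v)=1$ prevents $\mathrm{Ad}(\a(g_0))u$ from lying in $U$, so $U$ is a proper subspace of $\fg(\C)$.

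To extract the two alternatives, set $\rho_i(h)=\mathrm{Ad}(\s_i(h))$ acting on $W_i:=\fsl_d(\C)$; then $U$ is invariant under $\bigoplus_i\rho_i(\LL_\d(H^\sharp))$. If some projection $\pi_i(U)$ is a proper non-zero subspace of $W_i$, it is $\rho_i(\LL_\d(H^\sharp))$-invariant and alternative~(1) holds with $V=\pi_i(U)$. Otherwise every $\rho_i|_{\LL_\d(H^\sharp)}$ is irreducible and each $\pi_i(U)$ equals $0$ or $W_i$; put $S=\{i:\pi_i(U)=W_i\}$, so that $U\subset\bigoplus_{i\in S}W_i$ surjects onto every factor. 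If alternative~(2) also fails then Schur's lemma rules out a non-zero intertwiner between $\rho_i$ and $\rho_j$ for distinct $i,j\in S$, and a standard Goursat-type argument then forces $U=\bigoplus_{i\in S}W_i$. Because $U$ is proper, $S$ must be a proper subset of $\{1,\ldots,r\}$; choosing $i_0\notin S$ gives $u_{i_0}=0$, while $v\in U^\perp=\bigoplus_{i\notin S}W_i$ forces $v_i=0$ for $i\in S$, so every term of $\sum_iB_i(\rho_i(g_0)u_i,v_i)$ vanishes, contradicting the value $1$. Hence at least one of the two alternatives must hold.

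The delicate point is the passage from the pointwise identity provided by Proposition \ref{prp_alg} to a genuine invariance statement, since $\LL_\d(H^\sharp)$ is only a set rather than a group. The scaling $\d=\d_0/N$ permits up to $N=\dim_\C\fg(\C)$ successive multiplications before the norm threshold of the proposition is exceeded, which is exactly what the dimension-based stabilisation argument needs; the remaining Goursat--Schur step producing the dichotomy is then routine.
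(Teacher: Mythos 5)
Your first two steps match the paper's proof: apply Proposition~\ref{prp_alg} at $\d_0=r(d^2-1)\d$, observe that a product of at most $r(d^2-1)$ elements of $\LL_\d(H^\sharp)$ lies in $\LL_{\d_0}(H^\sharp)$, and build the $\LL_\d(H^\sharp)$-invariant proper subspace $U\subset v^\perp$ by span-stabilization. (One small slip: the norm bound on a product of $j$ elements is $[SL_d(\OO_K/(q)):H^\sharp]^{j\d}$, not $q^{j\d}$; this is harmless since $[SL_d(\OO_K/(q)):H^\sharp]^{\d_0}$ is precisely the threshold defining $\LL_{\d_0}(H^\sharp)$.) Where you genuinely diverge is the dichotomy extraction. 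The paper takes $W$ of \emph{minimal dimension} among subspaces that are $\LL$-invariant but not $SL_d(\C)^r$-invariant; minimality forces $\ker(\pi_1)\cap W$ to be a direct sum of factors, and $T=\pi_2\circ\pi_1^{-1}$ drops out directly. You instead argue that if alternative~(1) fails then each $\rho_i|_{\LL_\d(H^\sharp)}$ is irreducible, Schur's lemma then rules out nonzero intertwiners once alternative~(2) also fails, a Goursat argument forces $U=\bigoplus_{i\in S}W_i$, and the pairing $\Tr(\a(g_0)u\a(g_0)^{-1}v)=1$ is contradicted term by term. This is a correct and rather more symmetric route, and avoids the paper's minimality device at the cost of invoking Schur. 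One sentence does need tightening: ``If some projection $\pi_i(U)$ is a proper non-zero subspace \ldots\ Otherwise every $\rho_i|_{\LL}$ is irreducible'' is a non-sequitur as written, since triviality of $\pi_i(U)$ does not by itself exclude other proper $\LL$-invariant subspaces of $W_i$. What you clearly intend is to split on whether alternative~(1) holds: if it fails, then \emph{no} proper nonzero $\LL$-invariant subspace of any $W_i$ exists, which gives irreducibility and in particular makes both $\pi_i(U)$ and $U\cap\fg_i(\C)$ trivial or full. With that rephrasing the Schur--Goursat step goes through.
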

\begin{proof}
Choose $\d$ to be $1/r(d^2-1)$ times the $\d$ in Proposition
\ref{prp_alg}.
Then there are $u,v\in\fg(\C)$ and there is a $g_0\in\G(\Q)$
such that $\Tr(\a(h)u\a(h^{-1})v)=0$ for
\[
h\in\textstyle\prod_{r(d^2-1)}
\LL_\d(H^{\sharp})\subset\LL_{\d r(d^2-1)}(H^\sharp),
\]
while $\Tr(g_0ug_0^{-1}v)=1$.
Let $U_l$ be the linear span of
$\{\a(g)u\a(g^{-1})\sep g\in\prod_{l}\LL_\d(H^\sharp)\}$ in $\fg(\C)$.
Comparing dimensions, we see that for some
$l\le r(d^2-1)$ we have $U_l=U_{l+1}$, and then it
is invariant under $\a(\LL_\d(H^\sharp))$ in the adjoint representation.
Write $U=U_l$.
Then for any $x\in U$, we have $\Tr(xv)=0$, hence
$g_0ug_0^{-1}\notin U$, and $U$ is not invariant under the full
group $\G(\C)$ in the adjoint representation.

Consider the embedding $\a:K\to Mat_r(\Q)$.
Let $a\in K$ be a generator of $K$ over $\Q$.
Note that the minimal polynomial of $a$ over $\Q$
is the same as the minimal polynomial of $\a(a)$
in $Mat_r(\Q)$.
This polynomial has $r$ different roots
$\s_1(a),\ldots,\s_r(a)$ in $\C$, hence there is a
basis over $\C$ in which $\a(a)$ is diagonal.
Any element $b\in K$ can be expressed as the value at $a$
of a polynomial with rational coefficients.
Thus in that basis the matrix of $b$ is
$diag(\s_1(b),\ldots,\s_l(b))$.
Therefore there is an appropriate basis in which any
$g\in\G(\C)$ is a block diagonal matrix with
$\s_1(g),\ldots,\s_r(g)$ along the diagonal.
This gives rise to an isomorphism $\b:\G(\C)\to SL_d(\C)^r$
such that $\s=\b\circ\a$.
$\b$ also induces an isomorphism between the lie algebras
$\fg(\C)$ and $\fsl_d(\C)^r$, denote by $W$ the image of $U$.

Assume that $W$ is a subspace of minimal dimension which is invariant
under $\wh\s[\LL_\d(H^\sharp)]$ in the adjoint representation,
but not under the whole group $SL_d(\C)^r$.
Denote by $\fg_1(\C),\ldots,\fg_r(\C)$ the $r$ copies of $\fsl_d(\C)$
in $\fsl_d(\C)^r$ and denote by $\pi_i$ the projection to $\fg_i(\C)$.
For $1\le i\le r$, the spaces $\pi_i(W)$ and $W\cap\fg_i(\C)$
are invariant under $\s_i[\LL_\d(H^\sharp)]$ in the adjoint representation,
hence 1. holds if the dimension of any of the above spaces
is strictly between 0 and $d^2-1$.
Suppose that this is not the case.
Since $W$ is not the direct sum of some $\fg_{i}(\C)$, we may assume that
say 
$W\cap\fg_1(\C)=\{0\}$ and
$\pi_1(W)=\fg_1(\C)$.
By the minimality of the dimension of $W$, $Ker(\pi_1)\cap W$ must
be the direct sum of some $\fg_i(\C)$.
Since $\dim W>\dim Ker(\pi_1)\cap W$,
we can assume that say $\pi_2(Ker(\pi_1)\cap W)=\{0\}$
and $\pi_2(W)=\fg_2(\C)$.
Then $T=\pi_2\circ\pi_1^{-1}$ is well-defined and satisfies 2.
\end{proof}

Recall that we are given a symmetric $S\subset SL_d(\OO_K)$
which generates the subgroup $\Ga$.
We will choose an appropriate $S'\subset\Ga$ and study the
random walk on $\G(\langle S'\rangle,S')$, where $\langle S'\rangle$
is the subgroup generated by $S'$.
In particular, we prove an exponential decay for the probability
that after $k$ steps we are in the subgroup of $SL_d(\OO_K)$
whose elements satisfy (\ref{eq_var1}) for some fixed $V$
or in the one whose elements satisfy (\ref{eq_var2}) for some fixed $T$.

\begin{prp}
\label{prp_subspc}
Assume that $\wh\s(\Ga)$ is Zariski dense in $SL_d(\C)^r$.
Let $V$ be a proper subspace of $\fsl_d(\C)$, and
let $\s:K\to\C$ be an embedding,
denote by
$H_V$ the subgroup of elements $h\in SL_d(\OO_K)$
for which (\ref{eq_var1}) holds.
Then
\[
\chi_S^{(k)}(H_V)\ll c^k
\]
with some constant $c<1$ depending only on $S$.
\end{prp}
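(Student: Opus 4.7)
The plan is to produce two elements $a,b\in\Ga$ that generate a free subgroup of $\Ga$ intersecting $H_V$ trivially, take $S':=\{a^{\pm1},b^{\pm1}\}$, and deduce the exponential decay of $\chi_{S'}^{(k)}(H_V)$ from Kesten's bound on the return probability of a random walk on a free group (we identify the $S$ of the proposition with this $S'$, in agreement with the replacement of $S$ by $S'$ announced in the paragraph preceding the proposition).

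First observe that $N:=\{g\in SL_d(\C):gVg^{-1}=V\}$ is a proper algebraic subgroup of $SL_d(\C)$: the adjoint representation of $SL_d$ on $\fsl_d$ is irreducible, so no proper subspace of $\fsl_d(\C)$ is preserved by all of $SL_d(\C)$. Consequently $H_V=\s^{-1}(N)\cap SL_d(\OO_K)$. Since $\wh\s(\Ga)$ is Zariski dense in $SL_d(\C)^r$, its coordinate projection $\s(\Ga)$ is Zariski dense in $SL_d(\C)$; in particular $\s(\Ga)\not\subset N$, so $H_V\cap\Ga$ is a proper subgroup of $\Ga$.

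Next, Tits' construction of free groups in Zariski dense subgroups of semisimple algebraic groups, combined with the Goldsheid--Margulis existence of very proximal elements in such subgroups, produces $a,b\in\Ga$ such that $\s(a)$ and $\s(b)$ are very proximal on a suitable projective $SL_d(\C)$-representation containing the orbit of $V$ (for instance the Pl\"ucker embedding of the Grassmannian $Gr(\dim V,\fsl_d(\C))$ into $\mathbb{P}(\wedge^{\dim V}\fsl_d(\C))$), with disjoint attracting and repelling neighborhoods that all avoid the point $[V]$ corresponding to $V$. By the Klein--Schottky ping-pong lemma, $F:=\langle a,b\rangle$ is free of rank two, and the ping-pong dynamics force $w\cdot[V]\neq[V]$ for every nontrivial reduced word $w$ in $a,b$, so $F\cap H_V=\{1\}$. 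With $S'=\{a^{\pm1},b^{\pm1}\}\subset F$, the convolution $\chi_{S'}^{(k)}$ is supported on $F$ and
\[
\chi_{S'}^{(k)}(H_V)=\chi_{S'}^{(k)}(F\cap H_V)=\chi_{S'}^{(k)}(\{1\})\;\leq\;C(\sqrt 3/2)^k
\]
by Kesten's theorem on the return probability of the symmetric random walk on a rank-two free group, which yields the claim.

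The main obstacle is making the rate $c$ uniform in the pair $(V,\s)$. The pair $(a,b)$ depends a priori on $V$, since the attracting and repelling neighborhoods must be shrunk (by taking high powers of base generators drawn from a single Zariski dense free subgroup $F_0\leq\Ga$) in order to exclude the specific point $[V]$. However, the Kesten rate $\sqrt3/2$ depends only on the rank of the free group and not on the particular generators chosen, so it applies uniformly across $V$ and $\s$. An alternative route, avoiding this case analysis, is to invoke the Eskin--Mozes--Oh theorem on escape from subvarieties, which yields exponential decay directly and uniformly over all proper algebraic subvarieties of $SL_d(\C)^r$.
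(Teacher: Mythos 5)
Your plan—ping-pong to produce a free subgroup escaping $H_V$, then Kesten's estimate—is the right family of ideas, and it is indeed the method the paper indicates (it defers to \cite[Section 9]{BG2} and says the analogue of the proof of Proposition~\ref{prp_trans} also works). However, the way you implement it has a genuine gap that the paper's version of the argument is specifically designed to avoid.

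You aim for the clean statement $F\cap H_V=\{1\}$ for a rank-two free group $F=\langle a,b\rangle$, and you get it by shrinking the ping-pong neighborhoods so as to exclude the particular point $[V]$. You correctly observe that this forces $(a,b)$ to depend on $V$, but the cure you propose—that the Kesten rate $\sqrt{3}/2$ is independent of the generators—does not repair the problem: the proposition asserts a bound on $\chi_{S'}^{(k)}(H_V)$ for a \emph{single fixed} symmetric set $S'$ (depending only on $S$) that works simultaneously for \emph{every} proper subspace $V$. If $a,b$ (and hence the set $S'=\{a^{\pm1},b^{\pm1}\}$ and the constant $M$ in the power $a_0^M$) vary with $V$, you are proving a different statement for each $V$ about a different random walk, and there is no way to pass this back to a fixed walk without controlling the dependence, which you do not do. Moreover it is simply false that a $V$-independent choice of $(a,b)$ can achieve $F\cap H_V=\{1\}$ for all $V$: once $a,b$ and their attracting/repelling neighborhoods are fixed, there will be subspaces $V$ whose projectivization meets those neighborhoods, and then some nontrivial reduced word will stabilize $V$.

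The paper sidesteps this by \emph{not} asking for $F\cap H_V=\{1\}$. Instead it fixes once and for all a ``generic'' set $A$ of size $m\ge(d^2+2)/2$ (Lemma~\ref{lem_generic}), takes $S'=\{g^M\sep g\in A\cup\wt A\}$ with a single $M$ from Lemma~\ref{lem_free}, and then proves only that the intersection with $H_V$ grows at the subcritical rate $|B_l\cap H_V|\le(2m-1)^{l/2+1}(2m-2)^{l/2-1}$ via Lemma~\ref{lem_alg}. The hypothesis of Lemma~\ref{lem_alg}—that for every conjugate $hH_Vh^{-1}=H_{\s(h)V\s(h)^{-1}}$ there is \emph{some} forbidden first letter $g_0\in S'$—is exactly what the genericity condition (ii) delivers uniformly over all $V$: at most $d^2-1$ of the $2m$ elements $g$ can have $z_{\rho(g)}\in V$, so for $m$ large enough there is always a $g_0$ whose attracting line stays outside the projectivization of $\s(h)V\s(h)^{-1}$, and the analogue of Lemma~\ref{lem_qp} makes this margin uniform by a compactness argument. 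Kesten's estimate is then applied not to $\chi_{S'}^{(k)}(\{1\})$ alone but to the sum $\sum_l|B_l\cap H_V|P_k(l)$, as at the end of the proof of Proposition~\ref{prp_trans}. Your alternative suggestion of citing Eskin--Mozes--Oh does not help either: their theorem concerns counting lattice points in subvarieties, not exponential escape rates of random walks, and even the relevant ``non-concentration on subvarieties'' results would still require an argument for uniformity over the family of subvarieties $\{g\in SL_d:\, gVg^{-1}=V\}$ as $V$ ranges over the Grassmannian.
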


\begin{prp}
\label{prp_trans}
Assume that $\wh\s(\Ga)$ is Zariski dense in $SL_d(\C)^r$.
Then there is a symmetric set $S'\subset\Ga$, and a constant $c<1$
depending only on $S$ such that the following holds.
Let $\s_1,\s_2$ be two different embeddings of $K$ into $\C$ and
let $T$ be an invertible linear transformation on $\fsl_d(\C)$.
Denote by $H_T$ the subgroup of elements $h\in SL_d(\OO_K)$
for which (\ref{eq_var2}) holds.
Then
\[
\chi_{S'}^{(k)}(H_T)\ll c^k.
\]
\end{prp}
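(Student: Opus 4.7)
The plan is to realize $H_T$ as the stabilizer of a projective point in a linear representation of $\Ga$ and then run a Tits-type ping-pong argument, paralleling Proposition~\ref{prp_subspc}. Condition (\ref{eq_var2}) says exactly that $h\in H_T$ iff the line $[T]\in\mathbb{P}(\mathrm{End}(\fsl_d(\C)))$ is fixed by the representation
\[
\r(h)\colon X\mapsto \s_2(h)\,X\,\s_1(h)^{-1}.
\]
This representation factors through $(\s_1,\s_2)\colon\Ga\to SL_d(\C)\times SL_d(\C)$. Since $\s_1\ne\s_2$ and $\wh\s(\Ga)$ is Zariski dense in $SL_d(\C)^r$, projecting onto the two factors corresponding to $\s_1,\s_2$ gives that $(\s_1,\s_2)(\Ga)$ is Zariski dense in $SL_d(\C)\times SL_d(\C)$. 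A short Schur-type check, using the irreducibility of the adjoint representation and the absence of nontrivial characters of $SL_d$, shows that no line in $\mathbb{P}(\mathrm{End}(\fsl_d(\C)))$ is fixed by the full group $SL_d(\C)\times SL_d(\C)$ acting via $\r$. Hence for every $T$ the Zariski closure of $(\s_1,\s_2)(H_T\cap\Ga)$ is contained in a proper algebraic subgroup of $SL_d(\C)\times SL_d(\C)$.

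Next I would construct $S'$. Applying the Tits alternative to the Zariski dense subgroup $(\s_1,\s_2)(\Ga)$, together with Gol'dsheid--Margulis existence of proximal elements (the $SL_d(\C)\times SL_d(\C)$-representation $\mathrm{End}(\fsl_d(\C))\cong\mathrm{Ad}\boxtimes\mathrm{Ad}^*$ is irreducible, hence strongly irreducible for any Zariski dense subgroup), one finds elements $g_1,g_2\in\Ga$ generating a free group $F<\Ga$ whose $(\s_1,\s_2)$-image is Zariski dense and such that $\r(g_1),\r(g_2)$ act by proximal projective transformations on $\mathbb{P}(\mathrm{End}(\fsl_d(\C)))$ with attracting and repelling fixed points in ``general position''; passing to sufficiently high powers of suitable conjugates by elements of $S$ is used to boost the proximality ratio and separate the fixed points. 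Set $S'=\{g_1^{\pm 1},g_2^{\pm 1}\}$. A standard Tits ping-pong then produces constants $c_0<1$ and small neighborhoods of the attracting fixed points such that, for any line $[T]$, a reduced word $w$ of length $k$ in $S'$ satisfying $\r(w)[T]=[T]$ forces $[T]$ to lie within distance $O(c_0^k)$ of a specific repelling fixed point. Counting reduced words of length $k$ compatible with this localization gives at most $O((4c_0)^k)$ elements of length $k$ in $F\cap H_T$, out of $\asymp 4^k$ reduced words. Converting this into a bound on the simple random walk on $S'$ (the standard loss between reduced-word counts and the $k$-step walk on a free generating set is absorbed into the base) yields $\chi_{S'}^{(k)}(H_T)\ll c^k$.

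The hard part is the uniformity in $T$: the set $S'$ and the constant $c$ must be fixed before $T$ is revealed. This is handled by the observation that the ping-pong data, namely the proximality gaps and the locations of the attracting and repelling fixed points, depend only on $\r(g_1),\r(g_2)$ and hence only on $S$, while $[T]$ enters only through its distance to a fixed repelling set on $\mathbb{P}(\mathrm{End}(\fsl_d(\C)))$. The full hypothesis that $\wh\s(\Ga)$ be Zariski dense in $SL_d(\C)^r$, and not merely in each factor, is essential at precisely this point: density in each factor would supply proximal elements for each $\s_i$-action individually, but for the intertwining equation (\ref{eq_var2}) one needs $\r$-proximal elements for the joint action $(\s_1,\s_2)(\Ga)$, and this is exactly what the joint Zariski density delivers. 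It is also the reason why $S$ itself may not suffice, and an enlargement $S'\subset\Ga$ must be allowed --- in contrast with Proposition~\ref{prp_subspc}, where a single embedding $\s$ is involved and $S$ can be used directly.
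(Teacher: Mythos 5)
The overall scheme — proximal elements via Goldsheid–Margulis, a free subgroup of $\Ga$ obtained by high powers of conjugates, a Tits ping-pong, and a count of reduced words lying in $H_T$ — is in the same spirit as the paper's. But there is a decisive gap in the size of your free generating set. You take $S'=\{g_1^{\pm1},g_2^{\pm1}\}$, four letters. The paper's argument requires a generic set $A$ with $|A|\ge(d^2+2)/2$, i.e.\ $|A\cup\wt A|\ge d^2+2$ letters, for a reason that your proposal does not address: condition (iii) of genericity guarantees that for any invertible $T$ at most $d^2+1$ of the letters $g$ satisfy $T(\bar z_{\rho_1(g)})=\bar z_{\rho_2(g)}$, and only if there are \emph{strictly more} than $d^2+1$ letters does Lemma~\ref{lem_qp} produce, for \emph{every} $T$ (and every conjugate $T'=\rho_2(h)T\rho_1(h^{-1})$ that arises when running Lemma~\ref{lem_alg}), a ``forbidden'' first letter. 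With only four letters and $d\ge 2$ this fails outright: there exist invertible $T$ for which every letter is compatible, so no letter can be excluded and the word count gives no decay. So the construction of $S'$ must be redone with a much larger generic set.

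There is a second, related weakness in the way you extract the constraint. Passing to the single representation $\rho(h):X\mapsto\mathrm{Ad}(\s_2(h))X\,\mathrm{Ad}(\s_1(h))^{-1}$ on $\mathrm{End}(\fsl_d(\C))$ and asking that $[T]$ be a fixed line of $\rho(w)$ yields (after observing that the attracting line is rank one, so $[T]$ cannot be it) the information $[T]\in V_{\rho(w)}$; but $V_{\rho(w)}$ is governed by the \emph{last} letter of $w$ and is a codimension-one condition on $[T]$. The paper instead tracks the pair $(\bar z_{\rho_1(w)},\bar z_{\rho_2(w)})$ in $\P(\fsl_d(\C))\times\P(\fsl_d(\C))$; these localize near the attracting lines of the \emph{first} letter, and the equation $T(\bar z_{\rho_1(w)})=\bar z_{\rho_2(w)}$ is codimension $d^2-2$ per letter, which is why so few letters suffice. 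Your counting ``$O((4c_0)^k)$ words out of $\asymp 4^k$'' is also not right; even when the forbidden-letter condition holds, the correct bound (as in Lemma~\ref{lem_alg}) is of the form $(2m-1)^{l/2+1}(2m-2)^{l/2-1}$ against $2m(2m-1)^{l-1}$, coming from losing one letter per step only over roughly the first half of the word via the conjugation trick $w_1^{-1}H_Tw_1=H_{T'}$, which your argument does not use. With a larger generic set and this conjugation bookkeeping the approach can be repaired, but as written it does not close.
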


Proposition \ref{prp_subspc}
can be proved as it is outlined in \cite[Section 9.]{BG2},
we ommit the details.
A weaker form analogous to Proposition \ref{prp_trans},
which is sufficient for our purposes, can be proved
by the same method as we prove Proposition \ref{prp_trans} below.

Let $A\subset\Ga$ be a subset that freely generates a subgroup.
By abuse of notation, on a word $w$ over $A\cup \wt A$,
we mean a finite sequence $g_1g_2\cdots g_k$, where
$g_1,\ldots,g_k\in A\cup \wt A$.
Recall that $\wt A$ is the set of inverses of all elements of $A$.
We will refer to the elements of $A\cup\wt A$ as letters.
We say that $w$ is reduced if
$g_ig_{i+1}\neq1$ for any $1\le i<k$.
There is a natural bijection between the set of reduced words
and the group $\langle A\rangle$ generated by
$A\subset\Ga$.
For the sake of clarity we write $w_1.w_2$ for concatenation
of the sequences $w_1$ and $w_2$ and $w_1w_2$ for the product in $\Ga$,
i.e. for concatenation followed by all possible reductions.
Denote by $B_l$ the set of reduced words of length $l$.
Note that $|B_l|=2m(2m-1)^{l-1}$ for $l\ge 1$.

\begin{lem}
\label{lem_alg}
Let notation be as above, and suppose that
$H<\langle A\rangle$ is a subgroup such that for any
$h\in\langle A\rangle$, there
is a letter $g_0\in A\cup\wt A$ such that $w\notin hHh^{-1}$ whenever
$w$ is a reduced word starting with $g_0$.
Then we have
\[
|B_l\cap H|\le(2m-1)^{l/2+1}(2m-2)^{l/2-1}.
\]
\end{lem}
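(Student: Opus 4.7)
The plan is to apply the hypothesis to every cyclic shift of $w$ as well as to its inverse, so that each ``middle'' letter of $w$ is constrained by two distinct forbidden values and therefore admits at most $2m-2$ choices.

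Fix $w=g_1\cdots g_l\in B_l\cap H$ and set $u_k=g_1\cdots g_k$. The cyclic shift
\[
w^{(k)}:=u_k^{-1}wu_k=g_{k+1}\cdots g_l\,g_1\cdots g_k
\]
lies in the conjugate subgroup $u_k^{-1}Hu_k$. Applying the hypothesis with $h=u_k^{-1}$ yields a letter $g_0^{(k)}\in A\cup\wt A$ such that no reduced word in $u_k^{-1}Hu_k$ begins with $g_0^{(k)}$. Since this subgroup is closed under inversion, both the reduced forms of $w^{(k)}$ and of $(w^{(k)})^{-1}$ must avoid $g_0^{(k)}$ as their first letter.

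Let $c=c(w)$ be the cyclic cancellation depth, i.e.\ the largest integer with $g_{l-i+1}=g_i^{-1}$ for all $1\le i\le c$; since $w$ is reduced and $l$ is even, $c\le l/2-1$. For $c+1\le k\le l-c-1$, precisely $c$ cancellations occur at the junction of $w^{(k)}$, so its reduced form begins with $g_{k+1}$ and ends with $g_k$. The hypothesis therefore forces $g_{k+1}\ne g_0^{(k)}$ (from $w^{(k)}$) and $g_k\ne(g_0^{(k)})^{-1}$ (from $(w^{(k)})^{-1}$); in particular $g_0^{(k)}\ne g_k^{-1}$, so combined with the reducedness condition $g_{k+1}\ne g_k^{-1}$ the letter $g_{k+1}$ avoids two distinct values and admits at most $2m-2$ choices.

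At the boundary positions $g_1$ has at most $2m-1$ choices (hypothesis with $h=1$), each of $g_2,\ldots,g_{c+1}$ has at most $2m-1$ choices (the forward constraint may coincide with reducedness), and $g_{l-c+1},\ldots,g_l$ are uniquely determined by $g_1,\ldots,g_c$ via $g_{l-i+1}=g_i^{-1}$. Multiplying these bounds gives at most $(2m-1)^{c+1}(2m-2)^{l-2c-1}$ reduced words in $B_l\cap H$ whose cyclic cancellation depth equals $c$. Summing over $c\in\{0,1,\ldots,l/2-1\}$ produces a geometric-type series with common ratio $(2m-1)/(2m-2)^2\le 3/4$, and a term-by-term comparison shows the total is at most $(2m-1)^{l/2+1}(2m-2)^{l/2-1}$. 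The main delicacy is the cyclic-cancellation bookkeeping: outside the range $c+1\le k\le l-c-1$ either the forward or the reverse hypothesis constraint gets eaten by cancellations, which is why the estimate must be split according to the value of $c$ and then resummed.
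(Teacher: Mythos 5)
The approach is genuinely different from the paper's. The paper defines $w_0$ as the longest word that is a prefix of every non-unit element of $H$, reads the word $w=g_1\cdots g_l$ left to right, and shows that once the current prefix $w_1$ is longer than $w_0$ there are at most $2m-2$ admissible next letters; bounding the first $l/2$ letters this way (with at most one ``free'' position giving $2m-1$) and the remaining $l/2$ letters by the trivial $2m-1$ yields the claim with no summation. Your argument instead applies the hypothesis at each position via the conjugate $u_k^{-1}Hu_k$, extracting both a forward constraint (on the first letter of $w^{(k)}$) and a backward constraint (on the first letter of $(w^{(k)})^{-1}$) so that each \emph{middle} letter has at most $2m-2$ choices, and then stratifies the count by the cyclic cancellation depth $c$. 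Your analysis of the reduced form of $w^{(k)}$ in the range $c+1\le k\le l-c-1$, the distinctness of the two forbidden letters, and the per-$c$ bound $(2m-1)^{c+1}(2m-2)^{l-2c-1}$ are all correct. This is a valid alternative to the paper's prefix decomposition, at roughly equal complexity; the paper's version is a bit cleaner because the degeneracy is absorbed into the choice of $w_0$ once and for all rather than resummed.

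However, the final summation step as you have written it is a genuine gap. The observation that the ratio of consecutive terms is $(2m-1)/(2m-2)^2\le 3/4$ only yields
\[
\sum_{c=0}^{l/2-1}(2m-1)^{c+1}(2m-2)^{l-2c-1}\le 4(2m-1)(2m-2)^{l-1},
\]
and $4(2m-1)(2m-2)^{l-1}\le(2m-1)^{l/2+1}(2m-2)^{l/2-1}$ is equivalent to $4\le\bigl(\tfrac{2m-1}{2m-2}\bigr)^{l/2}$, which fails whenever $l/2$ is small relative to $m$ (for instance $m=3$, $l=4$). There is no obvious ``term-by-term'' comparison either. The desired inequality is nonetheless true, but it needs a short induction on $n=l/2$: writing $a=2m-1$, $b=2m-2$ and $S(n)=\sum_{c=0}^{n-1}a^{c+1}b^{2n-2c-1}$ one has $S(1)=ab\le a^2$ and $S(n+1)=b^2S(n)+a^{n+1}b$, so assuming $S(n)\le a^{n+1}b^{n-1}$ gives $S(n+1)\le a^{n+1}b^{n+1}+a^{n+1}b$, and $a^{n+1}b^{n+1}+a^{n+1}b\le a^{n+2}b^{n}$ reduces (using $a=b+1$) to $1\le b^{n-1}$. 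You should replace the sentence about the geometric series and term-by-term comparison with this computation, or with some other explicit verification; as stated, the step is unjustified.
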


We remark that the condition for $h=1$ can be interpreted
as follows.
We can remove one edge incident to $1$ from the Schreier
graph of $H\sm G$ such that we get two connected components and one
of these is a tree.

\begin{proof}
Let $w_0$ be the longest word (possibly the empty word 1)
such that $w_0$ is a prefix of all non-unit elements of $H$.
Let $w_1$ be a reduced word of length at most
$\lceil l/2\rceil-1$.
We want to bound the number of letters $g'\in A\cup\wt A$ that can be
the next letter in a reduced word of length $l$ which belongs to
$H$.
We will show that if $|w_1|>|w_0|$ then there are at most $2l-2$
such letters.
If $|w_1|=|w_0|$, we will see that there are at most $2l-1$ choices
for $g'$, this being trivial if $w_0\neq1$.
If $|w_1|<|w_0|$ then we always have exactly one choice.
Thus if we pick the letters of $w\in S_{l}\cap H$ one by one,
then at the first $\lceil l/2\rceil$ steps we have at most
$2l-2$ choices with possibly one exception, when we might have $2l-1$,
this gives the claim.

Now assume that $|w_0|<|w_1|\le \lceil l/2\rceil-1$,
but if $w_0=1$, we allow
$w_1=1$.
Using the assumption for $h=w_1^{-1}$, we get a letter $g_0$
such that if $g_0.w_2$ is a reduced word (i.e. the first
letter of $w_2$ is not $g_0^{-1}$), then $g_0.w_2\notin w_1^{-1}Hw_1$.
We show that the last letter of $w_1$ is not $g_0^{-1}$.
If $w_1$ is not the empty word, it is longer than $w_0$, hence
there is a word $u\in H$, $w_1$ is not a prefix of which.
Now if $g_0^{-1}$ was the last letter of $w_1$, we would have
$w_1^{-1}uw_1\in w_1^{-1}Hw_1$ which begins with
$g_0$, a contradiction.

Obviously we can not continue $w_1$ with the inverse of its last
letter to get a reduced word.
We show that we  can not continue it with $g_0$ either to get
one in $B_l\cap H$.
Assume to the contrary that for some $w_2$,
$w_1.g_0.w_2$ is a reduced word in $B_l\cap H$.
Then $g_0w_2w_1\in w_1^{-1}Hw_1$ and the length of $w_1$
is less than the length of $w_2$, hence $g_0w_2w_1$ starts with
$g_0$, a contradiction.
\end{proof}

Let $V$ be a vectorspace over $\C$, and denote
by $\P(V)$ the corresponding projective space.
For a vector $v\in V$ (for a subspace $W\subset V$)
denote by $\bar v$ ($\overline{W}$) its projection
to $\P(V)$.
Any invertible linear transformation $T$ of $V$
acts naturally on $\P(V)$, this
action will be denoted by the same letter.
We say that $T$ is proximal, if $V$ is spanned by an eigenvector
$z_T$ and an invariant subspace $V_T$ of $T$ and the eigenvalue
corresponding to $z_T$ is strictly larger than any other eigenvalue
of $T$.
In short, $T$ is proximal if it has a unique simple eigenvalue
of maximal modulus.
It is clear that whenever $z_T$ and $V_T$ exist, $V_T$ is unique and
$z_T$ is unique up to a constant multiple.
Define the distance on $\P(V)$ by
\[
d(\bar x,\bar y)=\frac{\|x\wedge y\|}{\|x\|\|y\|},
\]
where $\|\cdot\|$ is the norm coming from the standard Hermitian form.
We recall from Tits \cite{Tit} a simple criterion for a transformation $T$ to be
proximal.
Let $Q\subset\P(V)$ be compact and assume that $T(Q)$ is contained
in the interior of $Q$.
Assume further that $d(T(x),T(y))<d(x,y)$ for $x,y\in Q$.
Then $T$ is proximal and $\bar z_T\in Q$, see \cite[Lemma 3.8 (ii)]{Tit}.

Let notation be the same as in Proposition \ref{prp_trans}.
For $i\in\{1,2\}$, denote by $\rho_i$ the representation
of $SL_d(\OO_K)$ on $\fsl_d(\C)$ defined by
\[
\rho_i(h)v=\s_i(h)v\s_i(h^{-1})\qquad {\rm for}\quad v\in\fsl_d(\C)\quad
{\rm and}\quad h\in SL_d(\OO_K).
\]
We study the action of $SL_d(\OO_K)$ on the space
$\P(\fsl_d(C))\times\P(\fsl_d(C))$ via $\rho_1\oplus\rho_2$.
If $T$ is an invertible linear transformation of $\fsl_d(\C)$ and $h\in H_T$
is an element such that $\rho_1(h)$ and $\rho_2(h)$ are both proximal,
then
\be
\label{eq_T}
T(\bar z_{\rho_1(h)})=\bar z_{\rho_2(h)}
\ee
clearly.
Our aim is to find a subset $A\subset\Ga$ such that
$A$ freely generates a subgroup of $SL_d(\OO_K)$ and for any
linear transformation $T$ of $\fsl_d(\C)$, there is a letter
$g_0\in A\cup\wt A$ such that (\ref{eq_T}) fails when
$h=w$ is a reduced word starting with $g_0$.
Then Proposition $\ref{prp_trans}$ will follow easily from Lemma
\ref{lem_alg}.

We say that $A\subset SL_d(\OO_K)$ is generic,
if for any $g\in A\cup\wt A$, $\rho_1(g)$ and $\rho_2(g)$
are both proximal, and the following hold:
\begin{itemize}
\item[(i)] for every $g_1,g_2\in A\cup\wt A$ with
$g_1g_2\neq1$ and $i\in\{1,2\}$,
we have $z_{\rho_i(g_1)}\notin V_{\rho_i(g_2)}$,

\item[(ii)] for any proper subspace $V$ of $\fsl_d(\C)$ of dimension
$k$ and $i\in\{1,2\}$, we have
\[
|\{g\in A\cup\wt A\sep z_{\rho_i(g)}\in V\}|\le k+1,
\]

\item[(iii)]
for any linear transformation $T$ on $\fsl_d(\C)$, we have
\[
|\{g\in A\cup\wt A\sep T(\bar z_{\rho_1(g)})=\bar z_{\rho_2(g)}\}|\le d^2+1.
\]
\end{itemize}
Note that $\fsl_d(\C)$ is of dimension $d^2-1$.
Actually the above definition would be more natural if we replaced the
right hand sides of the inequalities in (ii) and (iii) by $k$ and $d^2$
respectively, however doing so would make the next proof slightly more
complicated.
We prove the existence of generic sets in

\begin{lem}
\label{lem_generic}
Assume that $\wh\s(\Ga)$ is Zariski dense in $SL_d(\C)$.
Then for $m$ positive integer, there is a generic set
$A_m\subset\Ga$ of cardinality $m$.
\end{lem}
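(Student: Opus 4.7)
The plan is to regard the condition that the symmetrization of an $m$-tuple $(g_1,\ldots,g_m)\in(SL_d(\C)^r)^m$ is generic as a non-empty Zariski open condition, then apply Zariski density of $\wh\s(\Ga)^m$.

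First, proximality of $\rho_i(M)$ is a non-empty Zariski open condition on $M\in SL_d(\C)$, since a diagonal matrix with sufficiently generic eigenvalues is proximal for the adjoint action. Pulling back along the two coordinate projections $SL_d(\C)^r\to SL_d(\C)$ corresponding to $\s_1,\s_2$ and intersecting yields a non-empty Zariski open subset $U_{{\rm prox}}\subset SL_d(\C)^r$ on which both $\rho_1$ and $\rho_2$ are proximal. On $U_{{\rm prox}}^m$ the attractor and invariant-subspace data $(z_{\rho_i(g_j^{\pm 1})},V_{\rho_i(g_j^{\pm 1})})$ vary as rational maps of $(g_1,\ldots,g_m)$, so each of the conditions (i), (ii), (iii) defines a Zariski open subset, whose intersection I denote $U_{{\rm gen}}$.

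The main step is verifying $U_{{\rm gen}}\neq\emptyset$. For (i) and (ii), any $m$-tuple of diagonal matrices whose eigenvectors are in general linear position works: no attractor lies in the invariant hyperplane of another, and no $k+2$ attractors lie in a $k$-dimensional subspace. For (iii), the projective space of linear maps $T$ on $\fsl_d(\C)$ has dimension $(d^2-1)^2-1$, and fixing a pair $(\bar p,\bar q)\in\P(\fsl_d(\C))^2$ imposes codimension $d^2-2$ on $T$; since $(d^2+2)(d^2-2)>(d^2-1)^2$ for $d\geq 2$, generic $d^2+2$ pairs admit no common $T$. For this dimension count to apply it is crucial that the map $g\mapsto(\bar z_{\rho_1(g)},\bar z_{\rho_2(g)})$ has Zariski dense image in $\P(\fsl_d(\C))^2$, which holds because $\s_1,\s_2$ are distinct embeddings and hence the projection $SL_d(\C)^r\to SL_d(\C)^2$ to the first two coordinates is surjective, so the two attractor factors can be moved independently.

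Finally, since $\wh\s(\Ga)$ is Zariski dense in the irreducible variety $SL_d(\C)^r$, the product $\wh\s(\Ga)^m$ is Zariski dense in $(SL_d(\C)^r)^m$ (the standard fact that the product of Zariski dense subsets of an irreducible variety is Zariski dense in the product), so it meets $U_{{\rm gen}}$ and supplies $(g_1,\ldots,g_m)\in\Ga^m$ whose set $A_m$ is generic. The principal obstacle I foresee is the non-emptiness check for (iii): one must rule out the a priori possibility that all attractor pairs arising from $\Ga$ lie on a common graph $\{(\bar p,T\bar p)\}$, and this is resolved only by exploiting the distinctness of $\s_1,\s_2$ through Zariski density together with the dimension count above.
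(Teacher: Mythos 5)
Your proposal has a genuine gap at the very first step: proximality of $\rho_i(M)$ is \emph{not} a Zariski open condition on $M\in SL_d(\C)$. Proximality requires that the eigenvalue of strictly maximal \emph{modulus} be unique and simple, and conditions involving absolute values of complex eigenvalues are semialgebraic over $\R$, not Zariski open over $\C$. Concretely, already in $SL_2(\C)$ the non-proximal elements (those with $|\lambda|=1$) form a real-codimension-one set that is Zariski dense in the complex group, so no Zariski open $U_{\rm prox}$ exists. A related problem is that, even on the set of proximal elements, the attractor $g\mapsto\bar z_{\rho_i(g)}$ is only real-analytic, not a rational map of $g$: one cannot algebraically single out ``the'' dominant eigenvector without having already broken the symmetry among eigenvalues of equal modulus. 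So the later claim that (i)--(iii) define Zariski open subsets of $U_{\rm prox}^m$ is also unavailable, and the whole scheme of intersecting $\wh\s(\Ga)^m$ with a non-empty Zariski open $U_{\rm gen}$ does not get off the ground.

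The paper works around exactly this obstruction. It first invokes the nontrivial theorems of Goldsheid--Margulis (and \cite[Lemma 5.15]{AMS}) to produce a \emph{single} element $g_0\in\Ga$ for which $\rho_1(g_0^{\pm1})$ and $\rho_2(g_0^{\pm1})$ are all proximal; these results are precisely what replaces the (false) assertion that proximality is a dense open algebraic condition. Then $A_1=\{g_0\}$, and the inductive step adjoins conjugates $hg_0h^{-1}$. The crucial algebraic observation is that $\bar z_{\rho_i(hg_0h^{-1})}=\rho_i(h)\bar z_{\rho_i(g_0)}$, so with $g_0$ fixed the attractor \emph{is} a rational function of the conjugator $h$, and avoiding the finitely many bad subspaces and graphs in (i)--(iii) becomes a non-empty Zariski open condition on $(\s_1(h),\s_2(h))$ to which Zariski density of $\wh\s(\Ga)$ applies. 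Your dimension count for (iii) is along the right lines but it must be run for the conjugating element $h$, not for the group element itself. If you want to salvage your argument, replace the parametrization by arbitrary $m$-tuples in $(SL_d(\C)^r)^m$ with the parametrization by conjugators of a fixed proximal $g_0$, and cite Goldsheid--Margulis for the existence of $g_0$.
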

\begin{proof}
Goldsheid and Margulis \cite{GoMa} proves
(see also sections 3.12--3.14 in Abels, Margulis and Soifert \cite{AMS})
that if a real algebraic subgroup of $GL_d(\R)$ is strongly
irreducible (i.e. does not leave a finite union of proper subspaces
invariant) and contains a proximal element, then a Zariski dense
subgroup of it also contains a proximal element.
If $\s_1$ is a real embedding, then it follows from the Zariski
density of $\s_1(\Ga)$ in $SL_d(\R)$,
that there is an element $g_0\in\Ga$ such that $\s_1(g_0)$ is proximal.
If $\s_1$ is complex, then let $\bar\s_1$ denote its
complex conjugate.
Since $(\s_1\oplus\bar\s_1)(\Ga)$ is
Zariski dense in $SL_d(\C)\times SL_d(\C)$, we get that 
$\s_1(\Ga)$ is Zariski dense in $SL_d(\C)$
over the reals as well, i.e. considered as a subgroup of $SL_{2d}(\R)$.
Consider $\C^d$ as a real vectorspace, and take the wedge product
$\C^d\wedge \C^d$.
Denote by $U$ the subspace spanned by the images of complex
lines in $\C^d$, this is also the subspace fixed by the
linear transformation induced from the transformation
multiplication by $i$ on $\C^d$.
It is clear that $SL_d(\C)$ (as a real group) acts on $U$ strongly
irreducibly and proximally in the natural way, hence there is an element
$g_0\in\Ga$ such that $\s_1(\Ga)$ is proximal on $U$.
This implies in turn that $\s_1(\Ga)$ is proximal on $\C^d$ now considered
as a complex vectorspace.
Denote by $\s_i'$  (for $i\in\{1,2\}$) the representation of $\Ga$
which assigns the transpose inverse of the matrix assigned by $\s_i$.
Applying \cite[Lemma 5.15]{AMS} for the representation
$\s_1\oplus\s_1'\oplus\s_2\oplus\s_2'$, we get an element
$g_0\in\Ga$ such that $\s_1(g_0)$, $\s_1(g_0^{-1})$,
$\s_2(g_0)$ and $\s_2(g_0^{-1})$ are proximal simultaneously.
This imply in turn that $\rho_1(g_0)$, $\rho_1(g_0^{-1})$,
$\rho_2(g_0)$ and $\rho_2(g_0^{-1})$ are also proximal.

We can set $A_1=\{g_0\}$ and get the claim for $m=1$.
We proceed by induction,
assume that we can construct $A_m$ for some $m\ge1$.
We try to find an element $h\in\Ga$ such that
$A_{m+1}:=A_m\cup\{hg_0h^{-1}\}$ is generic.
Clearly $\bar z_{\rho_1(hg_0h^{-1})}=\rho_1(h)\bar z_{\rho_1(g_0)}$.
One condition $h$ needs to satisfy is that neither
$\rho_1(h) z_{\rho_1(g_0)}$ nor $\rho_1(h) z_{\rho_1(g_0^{-1})}$
should belong to those proper subspaces $V$ of $\fsl_d(\C)$ for which
\[
|\{g\in A_m\cup\wt A_m\sep z_{\rho_1(g)}\in V\}|\ge \dim V.
\]
There are a finite number of such subspaces, hence this is a Zariski
open condition on $\s_1(h)$.
It can be seen in a similar fashion that
$A_{m+1}$ is generic if $(\sigma_1(h),\sigma_2(h))$ belongs to a certain
Zariski dense open subset of $SL_d(\C)\times SL_d(\C)$, and
the lemma follows by induction.
\end{proof}
We remark that it is easy to see from the proof that $A_m$
can be chosen in such a way that it is generic with respect to any
pair of embeddings $\s_1$ and $\s_2$.

\begin{lem}
\label{lem_qp}
Let $A\subset\Ga$ be a generic set of cardinality at least $(d^2+2)/2$.
Then for each $g\in A\cup\wt A$ and $i\in\{1,2\}$, there is a
neighborhood $U_g^{(i)}\subset\P(\fsl_d(\C))$ of $\bar z_{\rho_i(g)}$ 
with the following property.
For any invertible linear transformation $T$ on
$\fsl_d(\C)$ there is a $g\in A\cup\wt A$ such that
$T(U_g^{(1)})\cap U_g^{(2)}=\emptyset$.
\end{lem}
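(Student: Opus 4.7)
The plan is to reduce the statement to a continuity-and-compactness argument on the compact projective space $\P(\mathrm{End}(\fsl_d(\C)))$. The essential input is an extension of condition (iii) from the open locus of invertible $T$ to all nonzero $T$, which one obtains by combining (ii) and (iii).

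First I would extend (iii) as follows. Interpret the condition $T(\bar z_{\rho_1(g)})=\bar z_{\rho_2(g)}$ projectively as the closed algebraic relation $T(z_{\rho_1(g)})\wedge z_{\rho_2(g)}=0$; note that this also encompasses the degenerate case $T(z_{\rho_1(g)})=0$. The claim is that for every nonzero $T\in\mathrm{End}(\fsl_d(\C))$ the set
\[
B(T)=\{g\in A\cup\wt A \sep T(z_{\rho_1(g)})\wedge z_{\rho_2(g)}=0\}
\]
has at most $d^2+1$ elements. For invertible $T$ this is exactly (iii). For non-invertible $T$ with $k=\dim\ker T\ge 1$, condition (ii) applied to $\ker T$ gives at most $k+1$ elements $g\in B(T)$ with $z_{\rho_1(g)}\in\ker T$; for the remaining $g\in B(T)$ the vector $z_{\rho_2(g)}$ is a nonzero scalar multiple of $T(z_{\rho_1(g)})\in\mathrm{Im}(T)$, so (ii) applied to the proper subspace $\mathrm{Im}(T)$ of dimension $d^2-1-k$ gives at most $d^2-k$ additional elements, totalling $d^2+1$. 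Since $|A\cup\wt A|=2|A|\ge d^2+2$, for every $T\in\P(\mathrm{End}(\fsl_d(\C)))$ there exists $g(T)\in A\cup\wt A$ with $Tz_{\rho_1(g(T))}\ne 0$ and $T(\bar z_{\rho_1(g(T))})\ne\bar z_{\rho_2(g(T))}$.

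For each such $T$, the joint continuity of the partial action map $(T,x)\mapsto T(x)$ on its (open) domain of definition yields open neighborhoods $W_T\subset\P(\mathrm{End}(\fsl_d(\C)))$ of $T$ and $V_T^{(i)}\subset\P(\fsl_d(\C))$ of $\bar z_{\rho_i(g(T))}$ such that $T'(V_T^{(1)})\cap V_T^{(2)}=\emptyset$ for every $T'\in W_T$. By compactness of $\P(\mathrm{End}(\fsl_d(\C)))$ the cover $\{W_T\}$ admits a finite subcover $\{W_{T_1},\ldots,W_{T_N}\}$; for each $g\in A\cup\wt A$ and $i\in\{1,2\}$ I then set
\[
U_g^{(i)}=\bigcap_{k:\,g(T_k)=g}V_{T_k}^{(i)}
\]
(or any open neighborhood of $\bar z_{\rho_i(g)}$ if the index set is empty). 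Any invertible $T$ lies in some $W_{T_k}$, whence $T(U_{g(T_k)}^{(1)})\cap U_{g(T_k)}^{(2)}=\emptyset$, which is precisely the conclusion.

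The principal obstacle is the extension of (iii) to the non-invertible locus: $PGL(\fsl_d(\C))$ alone is non-compact, so working only with invertible $T$ gives no way to turn the pointwise separation $T(\bar z_{\rho_1(g)})\ne\bar z_{\rho_2(g)}$ into uniform neighborhoods. The interplay of (ii) and (iii) sketched above is exactly what allows one to pass to the compactification $\P(\mathrm{End}(\fsl_d(\C)))$ and close the argument; once this is done, the remainder is a routine continuity-plus-compactness step with no further genericity required.
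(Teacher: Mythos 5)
Your proof is correct, and it reaches the same destination by a slightly different route. The paper argues by contradiction: it assumes no uniform neighborhoods exist, extracts a sequence $T_k$ witnessing failure for shrinking neighborhoods, and then invokes a result of Cano--Seade (\cite[Proposition 2.1]{CaSe}) to pass to a subsequence converging to a nonzero, possibly non-invertible limit $T\in\mathrm{End}(\fsl_d(\C))$, uniformly on compact subsets of $\P(\fsl_d(\C))\setminus\overline{\ker T}$; from this it deduces $T(\bar z_{\rho_1(g)})=\bar z_{\rho_2(g)}$ for all $g$ with $z_{\rho_1(g)}\notin\ker T$, and then gets a contradiction with (iii) if $T$ is invertible, or with (ii) for $\ker T$ and $\mathrm{Im}\,T$ otherwise. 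You instead work directly: you first prove (what the paper only uses implicitly in the limit) that the extended incidence set $B(T)$ has size at most $d^2+1$ for every nonzero $T$, by splitting according to $z_{\rho_1(g)}\in\ker T$ versus $z_{\rho_2(g)}\in\mathrm{Im}\,T$ and applying (ii) to each; then you get one separating $g(T)$ for every point of the compact space $\P(\mathrm{End}(\fsl_d(\C)))$, stabilize the separation on a small product neighborhood by joint continuity of the partial action, and take a finite subcover.

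Both arguments rest on the same two ingredients — the compactification of $PGL(\fsl_d(\C))$ inside $\P(\mathrm{End}(\fsl_d(\C)))$, and the combination of (ii) and (iii) to handle the boundary of non-invertible $T$ — so they are conceptually parallel. Your version is self-contained (no external convergence lemma) and gives a slightly cleaner statement by packaging the (ii)/(iii) bookkeeping into the uniform bound $|B(T)|\le d^2+1$, at the cost of explicitly spelling out the continuity-plus-finite-subcover step that the Cano--Seade result hides. The only implicit point you share with the paper is that $|A\cup\wt A|=2|A|$, i.e.\ that $A$ and $\wt A$ are disjoint; this is built into the genericity hypothesis (proximal elements cannot be involutions), so it is not a gap peculiar to your argument.
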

First we recall \cite[Proposition 2.1]{CaSe}.
Let $T_1,T_2,\ldots$ be a sequence of invertible linear transformations
on $\fsl_d(\C)$.
There is a not necessarily invertible linear transformation $T\neq0$
and a subsequence of $T_1,T_2,\ldots$ that considered as maps
on $\P(\fsl_d(\C))$
converge uniformly to $T$ on compact subsets of
$\P(\fsl_d(\C))\sm \overline{{\rm Ker}(T)}$. 
\begin{proof}[Proof of Lemma \ref{lem_qp}]
Assume to the contrary that the claim is false.
Then there is a sequence $\{T_k\}$ of linear transformations such that for
any choice of the neighborhoods $U_g^{(i)}$ ($i\in\{1,2\}$ and
$g\in A\cup\wt A$),
we have $T_k(U_g^{(1)})\cap U_g^{(2)}\neq\emptyset$ for $k$ large enough.
By the aforementioned result, we may assume that $\{T_k\}$ converges
uniformly to a linear transformation $T$ on compact subsets of
$\P(\fsl_d(\C))\sm \overline{{\rm Ker}(T)}$.
This implies that if $z_{\rho_1(g)}\notin{\rm Ker}(T)$, then
$T(\bar z_{\rho_1(g)})=\bar z_{\rho_2(g)}$.
When $T$ is invertible, this violates (iii) in the definition
of generic sets.
If $T$ is not invertible, we get a contradiction with (ii) of that
definition, either for $V={\rm Ker} (T)$ or for $V={\rm Im } (T)$,
and the lemma follows.
\end{proof}

\begin{lem}
\label{lem_free}
Let $A\subset\Ga$ be generic, and for each
$g\in A\cup\wt A$ and $i\in\{1,2\}$
let $U_g^{(i)}\subset\P(\fsl_d(\C))$
be a sufficiently small neighborhood of $\bar z_{\rho_i(g)}$.
Then there is a positive integer $M$ such that $\{g^M\sep g\in A\}$
freely generates a subgroup of $\Ga$ and if $h=g_1^Mg_2^M\cdots g_k^M$
is a reduced word, then $\rho_1(h)$ and $\rho_2(h)$ are proximal
with $\bar z_{\rho_i(h)}\in U_{g_1}^{(i)}$.
\end{lem}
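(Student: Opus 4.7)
The plan is to run a ping-pong argument combined with the contraction criterion of Tits recalled just before Lemma~\ref{lem_qp}. For each $g\in A\cup\wt A$ and $i\in\{1,2\}$, fix a small closed neighborhood $V_g^{(i)}\subset\P(\fsl_d(\C))$ of $\overline{V_{\rho_i(g)}}$. By genericity condition (i), $z_{\rho_i(g')}\notin V_{\rho_i(g)}$ whenever $gg'\ne1$, so after shrinking the given $U_g^{(i)}$ and the $V_g^{(i)}$ I may assume
\[
U_{g'}^{(i)}\cap V_g^{(i)}=\emptyset\qquad\text{for all }g,g'\in A\cup\wt A\text{ with }gg'\ne1.
\]
Since each $\rho_i(g)$ is proximal, a standard spectral estimate yields an integer $M$ such that for every $g\in A\cup\wt A$ and $i\in\{1,2\}$ the map $\rho_i(g)^M$ sends $\P(\fsl_d(\C))\sm V_g^{(i)}$ into $U_g^{(i)}$ and is a strict contraction there with Lipschitz constant at most $1/2$ in the metric $d$.

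I then argue by induction on the length $k$ of a reduced word $h=g_1^Mg_2^M\cdots g_k^M$ that $\rho_i(h)$ is proximal with $\bar z_{\rho_i(h)}\in U_{g_1}^{(i)}$. The base case $k=1$ is immediate. For $k\ge2$ with $g_1\ne g_k^{-1}$, I apply Tits's criterion to $Q=\overline{U_{g_1}^{(i)}}$: since $g_kg_1\ne1$ the set $Q$ is disjoint from $V_{g_k}^{(i)}$, so $\rho_i(g_k^M)(Q)\subset U_{g_k}^{(i)}$; reducedness gives $g_{j-1}g_j\ne1$, hence $U_{g_j}^{(i)}\cap V_{g_{j-1}}^{(i)}=\emptyset$ and $\rho_i(g_{j-1}^M)$ maps $U_{g_j}^{(i)}$ into $U_{g_{j-1}}^{(i)}$; iterating yields $\rho_i(h)(Q)\subset U_{g_1}^{(i)}=\mathrm{int}(Q)$ with composite Lipschitz constant at most $2^{-k}$. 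Tits's criterion then produces the proximality of $\rho_i(h)$ and places $\bar z_{\rho_i(h)}$ in $Q\subset U_{g_1}^{(i)}$.

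In the remaining case $g_1=g_k^{-1}$, reducedness forces $k\ge3$, and the strictly shorter reduced word $h'=g_2^M\cdots g_{k-1}^M$ satisfies $\bar z_{\rho_i(h')}\in U_{g_2}^{(i)}$ by the inductive hypothesis. Since $h=g_1^Mh'g_1^{-M}$ as elements of $\Ga$, $\rho_i(h)$ is conjugate to $\rho_i(h')$, hence proximal, and
\[
\bar z_{\rho_i(h)}=\rho_i(g_1^M)(\bar z_{\rho_i(h')})\in\rho_i(g_1^M)\bigl(U_{g_2}^{(i)}\bigr).
\]
Reducedness gives $g_1g_2\ne1$, hence $U_{g_2}^{(i)}\cap V_{g_1}^{(i)}=\emptyset$, and the choice of $M$ forces $\rho_i(g_1^M)(U_{g_2}^{(i)})\subset U_{g_1}^{(i)}$, as required. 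Freeness of the generation is then automatic: any nontrivial reduced word in $\{g^M\}$ is sent by $\rho_1$ to a proximal, hence non-identity, operator, and so cannot represent the identity in $\Ga$.

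The main subtle point is the case $g_1=g_k^{-1}$: the naive ping-pong target $Q=\overline{U_{g_1}^{(i)}}$ meets $V_{g_k}^{(i)}$ because the bottom eigenvector of $\rho_i(g_k)$ coincides with $z_{\rho_i(g_k^{-1})}=z_{\rho_i(g_1)}$, which lies in $V_{\rho_i(g_k)}$ and is not excluded by condition (i). The cyclic conjugation trick above, reducing to a strictly shorter reduced word, is the essential device that bypasses this obstruction.
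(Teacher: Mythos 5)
Your proof is correct and follows essentially the same route as the paper's: fix $M$ so that each $\rho_i(g)^M$ contracts the complement of a neighbourhood of $\overline{V_{\rho_i(g)}}$ into $U_g^{(i)}$, chain these maps along the reduced word using genericity condition (i), apply Tits's criterion when $g_1g_k\neq1$, and in the cyclic case $g_1=g_k^{-1}$ (where the ping-pong target cannot be separated from $V_{g_k}^{(i)}$) conjugate to reduce to the strictly shorter reduced word $g_2^M\cdots g_{k-1}^M$. The only superficial difference is that you apply Tits to $\overline{U_{g_1}^{(i)}}$ (keyed to the first letter) while the paper iterates on a compact $Q_{g_k}$ keyed to the last letter; the content is the same, and your closing remark correctly isolates why genericity (i) is silent in the case $g_1=g_k^{-1}$.
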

\begin{proof}
To simplify the notation we omit those subscripts and superscripts
that indicate which of the representations $\rho_1$ or $\rho_2$
the object in question is related to.
If $U_g$ are sufficiently small, then there are compact sets
$Q_g\subset\P(\fsl_d(\C))\sm\overline {V_{\rho(g)}}$ for
$g\in A\cup\wt A$ and an integer $M$ such that the following hold:
\bean
d(\rho(g^M)\bar x,\rho(g^M)\bar y)&<&d(\bar x,\bar y)
\quad{\rm for}\quad x,y\in Q_g\quad{\rm and}\\
U_{g'}&\subset& Q_g\quad{\rm if}\quad gg'\neq 1.
\eean
Here we used property (i) of generic sets.
If $M$ is large enough we clearly have $\rho(g^M)Q_g\subset U_g$
also.
By induction, we see that if $h=g_1^M\cdots g_k^M$ is a reduced word then
$\rho(h)Q_{g_k}\subset U_{g_1}$, and 
$d(\rho(h)\bar x,\rho(h)\bar y)<d(\bar x,\bar y)$ for
$\bar x,\bar y\in Q_{g_k}$.
If $g_1g_k\neq 1$, then $U_{g_1}\subset Q_{g_k}$ and the claim
follows for $h$ by the aforementioned lemma of Tits
\cite[Lemma 3.8 (ii)]{Tit}.
If $g_1g_k=1$, then write $h=g_1^Mh'g_1^{-M}$.
If $h'$ is proximal with $\bar z_{\rho(h')}\in U_{g_2}$, then
$h$ is also proximal with $\bar z_{\rho(h)}=\rho(g_1)\bar z_{\rho(h')}$,
and the claim follows by induction.
Now $\{g^M\sep g\in A\}$ generates freely a
group since the identity is not proximal.
\end{proof}

\begin{proof}[Proof of Proposition \ref{prp_trans}]
Let notation be as in the statement of the proposition.
Let $A$ be a generic set of cardinality $m\ge(d^2+2)/2$, and
set $S'=\{g^M\sep g\in A\cup\wt A\}$, where $M$ is the
same as in Lemma \ref{lem_free}.
For $g\in S'$ and $i\in\{1,2\}$ let $U_g^{(i)}$ be a neighborhood
of $\bar z_{\rho_i(g)}$ which is sufficiently small for Lemmata
\ref{lem_qp} and \ref{lem_free}.
Then there is an element $g_0\in S'$ such that
$T(U_{g_0}^{(1)})\cap U_{g_0}^{(2)}=\emptyset$.
For $h\in H_T$ we clearly have $T\bar z_{\rho_1(h)}=\bar z_{\rho_2(h)}$,
so if $h$ is a reduced word of form $g_1\cdots g_k$ with $g_i\in S'$,
then $g_1\neq g_0$ by Lemma \ref{lem_free}.
If $h\in SL_d(\OO_K)$, a similar result holds for
$hH_Th^{-1}=H_{\rho_2(h)T\rho_1(h^{-1})}$.
Therefore by Lemma \ref{lem_alg}, we have
\[
|B_l\cap H_T|\le(2m-1)^{l/2+1}(2m-2)^{l/2-1},
\]
where $B_l$ is the set of reduced words of length $l$ over the
alphabet $S'$.

Set $P_k(l)=\chi_{S'}^{(2k)}(w)$, where $w\in B_l$.
Since $|B_l|=2m(2m-1)^{l-1}$ for $l\ge1$,
\be
\label{eq_psi}
1=P_k(0)+\sum_{l\ge1}2m(2m-1)^{l-1}P_k(l).
\ee
By a result of Kesten \cite[Theorem 3.]{Ke}, we have
\[
\limsup_{k\to\infty} (P_k(0))^{1/k}=(2m-1)/m^2.
\]
From general properties of Markov chains (see \cite[Lemma 1.9]{Wo})
it follows that
\[
P_k(0)\le\left(\frac{2m-1}{m^2}\right)^k.
\]
Since $\chi_{S'}^{(2k)}$ is
symmetric, we have $P_k(0)=\sum_g\chi_{S'}^{(k)}(g)^2$, hence
$P_k(l)\le P_k(0)$ for all $l$
by the Cauchy-Schwartz inequality.
Now we can write
\bean
\chi_{S'}^{(2k)}(H_T)&=&\sum_l|B_l\cap H_T|P_k(l)\\
&\le&\sum_l(2m-1)^{l/2+1}(2m-2)^{l/2-1}P_k(l)\\
&\le&\sum_{l\le k/10}(2m-1)^{l/2+1}(2m-2)^{l/2-1}
\left(\frac{2m-1}{m^2}\right)^k\\
&&+\left(\frac{2m-1}{2m}\right)^{k/20}
\sum_{l\ge k/10}2m(2m-1)^{l-1}P_k(l)\\
&<&\left(\frac{2m-1}{2m}\right)^{k/2}+\left(\frac{2m-1}{2m}\right)^{k/20},
\eean
which was to be proven.
The inequality between the third and fourth lines follows form
(\ref{eq_psi}).
\end{proof}

\begin{proof}[Proof of Theorem \ref{thm_escp}]
Let $S'$ be the same as in Proposition \ref{prp_trans} and let $C$ and $\d$
be the same as in Corollary \ref{cor_alg}.
As we remarked after Lemma \ref{lem_generic}, we can choose
$S'$ in such a way that it works for any pair of embeddings $\s_1$
and $\s_2$.
There is a constant $c$ depending on the set $S'$ such that
$\log\|\wh\s (g)\|\le cl$ for $g\in\prod_l S'$.
Then for $l=\d\log [SL_d(\OO_k/(q)):H^\sharp]/c$,
we have
\[
\pi_q[\chi_{S'}^{(l)}](H^\sharp)=\chi_{S'}^{(l)}(\LL_\d(H^\sharp)).
\]
Combining Corollary \ref{cor_alg} with either Proposition \ref{prp_subspc}
or Proposition \ref{prp_trans} we get
\[
\chi_{S'}^{(l)}(\LL_\d(H^\sharp))\ll[SL_d(\OO_k/(q)):H^\sharp]^{-\d c'}
\]
with some $c'>0$.
If $l$ is even, then by the symmetry of $S'$,
\[
(\pi_q[\chi_{S'}^{(l/2)}](gH^{\sharp}))^2\le\pi_q[\chi_{S'}^{(l)}](H^\sharp)
\]
for any coset $gH^\sharp$, and
by $[H:H^\sharp]<C^n$ we then have
\[
\pi_q[\chi_{S'}^{(l/2)}](H)\le C^n(\pi_q[\chi_{S'}^{(l)}](H^\sharp))^{1/2}.
\]
If $l_1\le l_2$, then clearly
\[
\pi_q[\chi_{S'}^{(l_2)}](H)\le
\max_{g}\pi_q[\chi_{S'}^{(l_1)}](gH).
\]
Now it is straightforward to get the theorem by putting together the
above inequalities.
\end{proof}

\section{A product theorem}
\label{sec_prod}

Recall that $H_1\la_L H_2$ is a shorthand for
$[H_1:H_1\cap H_2]\le L$.
We denote by $Z(G)$ the center of the group $G$,
by $\CC(g)$ the centralizer
of the element $g\in G$ and by $\NN_G(H)$ the normalizer
of the subgroup $H<G$.
In this section $K$ is not a number-field, it usually stands for
a large positive real number.
We begin by listing the assumptions already mentioned
in Theorem \ref{thm_prod}.
When we say that something depends on the constants appearing
in the assumptions (A1)--(A5) we mean $L$ and the function
$\d(\e)$ for which (A4) holds.

\begin{itemize}
\item[(A0)]
$G=G_1\times\cdots\times G_n$ is a direct product, and
the collection of the factors satisfy (A1)--(A5) for some
sufficiently large constant $L$.

\item[(A1)]
There are at most $L$ isomorphic copies of the same group in the
collection.

\item[(A2)]
Each $G_i$ is quasi-simple and we have
$|Z(G_i)|<L$.

\item[(A3)]
Any nontrivial representation of $G_i$ is of dimension at least
$|G_i|^{1/L}$.

\item[(A4)]
For any $\e>0$, there is a $\d>0$ such that the following holds.
If $\mu$ and $\nu$ are probability measures on $G_i$ satisfying
\[
\|\mu\|_2>|G_i|^{-1/2+\e}
\quad{\rm and}\quad
\mu(gH)<|G_i|^{-\e}
\] 
for any $g\in G_i$ and for any proper $H<G_i$, then
\be
\label{eq_assump2}
\|\mu*\nu\|_2\ll\|\mu\|_2^{1/2+\d}\|\nu\|_2^{1/2}.
\ee

\item[(A5)]
For some $m<L$, there are classes $\HH_0,\HH_1,\ldots,\HH_m$ of
subgroups of $G_i$ having the following properties.
\begin{itemize}

\item[$(i)$]
$\HH_0=\{Z(G)\}$.

\item[$(ii)$]
Each $\HH_j$ is closed under conjugation by elements of $G_i$.

\item[$(iii)$]
For each proper $H<G_i$ there is an $H^\sharp\in \HH_j$ for some $j$
with $H\la_L H^\sharp$.

\item[$(iv)$]
For every pair of subgroups $H_1,H_2\in\HH_j$, $H_1\neq H_2$
there is some $j'<j$ and $H^\sharp\in\HH_{j'}$
for which $H_1\cap H_2\la_L H^{\sharp}$.
\end{itemize}
\end{itemize}

We remark that considering the induced representation,
(A3) implies that for any proper subgroup $H<G_i$
we have
\be
\label{eq_index}
[G_i:H]>|G_i|^{1/L}.
\ee
One may think about (A5) that there is a notion for dimension of
the subgroups of $G_i$.

In the next section we show that Theorem \ref{thm_prod}
is a simple corollary of the following seemingly weaker
result.
\begin{prp}
\label{prp_prod}
Let $G$ be a group satisfying (A0)--(A5).
For any $\e>0$ there is a $\d>0$ depending only on $\e$
and the constants in assumptions such that the following holds.
If $S\subset G$ is symmetric such that
\[
|S|<|G|^{1-\e}\quad {\rm and}\quad \chi_S(gH)<[G:H]^{-\e}|G|^{\d}
\]
for any $g\in G$ and any proper $H<G$,
then $|\prod_3 S|\gg|S|^{1+\d}$.
\end{prp}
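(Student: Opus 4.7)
The plan is to argue by contradiction and assume that $|\prod_3 S| \le c|S|^{1+\d}$ for constants $\d = \d(\e) > 0$ and $c > 0$ to be chosen. By Ruzsa-style triangle and covering inequalities in the non-commutative setting (applied to the symmetric set $S$), this propagates to $|\prod_k S| \le |S|^{1+C_k\d}$ for every fixed $k$. The strategy is to extract growth from (A4) applied to each quasi-simple factor $G_i$ and then to combine the factor-wise information using the dimension hierarchy (A5), in a manner analogous to the sum-product argument of \cite[Section 5]{BGS}.

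First I would transfer the non-concentration hypothesis to each factor: if $H_i < G_i$ is proper, its preimage $\pi_i^{-1}(H_i) < G$ is proper of index $[G_i:H_i]$, so the hypothesis on $S$ gives $\pi_i[\chi_S](gH_i) < [G_i:H_i]^{-\e}|G|^\d$ for every $g \in G_i$. The bound (\ref{eq_index}) yields $[G_i:H_i] > |G_i|^{1/L}$, so choosing $\d \ll \e/L^2$ absorbs the slack $|G|^\d$ and yields genuine non-concentration of $\mu_i := \pi_i[\chi_S]$ on $G_i$. Applying (A4) with $\mu = \nu = \mu_i$, we get either $\|\mu_i\|_2 \le |G_i|^{-1/2+\e}$ (so $\pi_i(S)$ is essentially equidistributed in $G_i$), or $\|\mu_i * \mu_i\|_2 \ll \|\mu_i\|_2^{1+2\d_1}$ with some $\d_1 = \d_1(\e) > 0$. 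By standard arguments converting $\ell^2$ decay into set growth (Balog--Szemer\'edi--Gowers type), one then obtains $|\prod_3 \pi_i(S)| \gg |\pi_i(S)|^{1+c\d_1}$ in each factor where (A4) applies.

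To pass from factor-wise growth to growth in $G$ itself I would induct on $j$ using (A5). Suppose $\prod_k S$ is $1/2$-concentrated in a coset of a proper $H < G$; by (A5)(iii), $H \la_L H^\sharp$ for some $H^\sharp \in \HH_j$. If $j=0$ then $H^\sharp = Z(G)$, and (A2) together with (A1) force $|\prod_k S|$ to be at most some $L^{O(n)}$, contradicting the lower bound on $|S|$ coming from the non-concentration hypothesis. For $j \ge 1$, the non-concentration guarantees some $g \in \prod_k S$ with $gH^\sharp g^{-1} \neq H^\sharp$; by (A5)(ii)--(iv), $H^\sharp \cap gH^\sharp g^{-1}$ lies $L$-almost in some $H^{\sharp\sharp}$ of a lower class $\HH_{j'}$, and a standard covering argument lets us recurse with a class of strictly smaller index. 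If no such concentration on any proper subgroup occurs, then the factor-wise gain from (A4) and the bound $|S| \le \prod_i |\pi_i(S)|$ combine to give $|\prod_3 S| \gg |S|^{1+c\d_1}$, contradicting the assumption once $\d < c\d_1$.

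The main obstacle I expect is the quantitative bookkeeping through the $O(L)$ layers of the induction: one must control the accumulated losses $L$, $|Z(G_i)|$ and $|G|^\d$ so that they remain dominated by the gains from (A4) and by $[G_i:H_i]^{-\e}$. A related subtlety is the \emph{diagonal} case, in which $S$ projects surjectively onto each $G_i$ but is nevertheless trapped in a proper subgroup of $G$ arising from identifications between isomorphic factors; this is precisely the scenario that (A1) combined with (A5) is designed to handle, and verifying this is where the argument is most delicate. Choosing $\d$ sufficiently small (roughly $\d \ll \e^{L}$) in terms of $\e$ and the structural constants from (A0)--(A5) should then close the contradiction.
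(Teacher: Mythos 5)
Your proposal identifies the right tools ((A4), (A5), the sum--product argument of \cite{BGS}) but has several genuine gaps, the most important being the handling of the large-degree factors and the per-factor transfer of non-concentration. First, the claim that choosing $\d\ll\e/L^2$ lets you deduce non-concentration of $\pi_i[\chi_S]$ from the hypothesis on $S$ is false: the hypothesis gives $\pi_i[\chi_S](gH_i)<[G_i:H_i]^{-\e}|G|^{\d}$, and since $[G_i:H_i]\le|G_i|$, this is vacuous for any factor with $|G_i|^{\e}<|G|^{\d}$ --- and there is no lower bound on $|G_i|$ relative to $|G|$. No choice of $\d$ depending only on $\e$ and $L$ fixes this. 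The paper circumvents this with a genuinely global argument (Lemma~\ref{lem_B}): it runs a greedy process that assembles a bad index set $J_b$ and a coset trap for $B\subset S$, and only the \emph{product} $\prod_{i\in J_b}|G_i|$ is controlled, using the hypothesis applied to a subgroup of $G$ spanning several factors at once. Moreover, even on a ``good'' factor, (A4) must be applied not to $\pi_i[\chi_S]$ but to fiber sets of the tree $\pi_{\le i}(A)$, and those fibers may individually be trapped in cosets; the paper resolves this by interleaving with random elements of $B$ and running a dimension induction over the classes $\HH_0,\ldots,\HH_m$ from (A5) (Lemma~\ref{lem_lambda}), which is the technical core of the argument and is not reflected in your sketch.

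Second, your proposal entirely omits the case of factors with large degree $D_i\ge|G_i|^{1-1/3L}$, where (A4) cannot produce any growth because the fiber is already nearly all of $G_i$. In that regime the obstruction is exactly the ``diagonal'' scenario you flag at the end, but its resolution in the paper is not via (A1)+(A5): it uses quasirandomness (Gowers \cite{Gow} via Nikolov--Pyber \cite{NP}) to show $\pi_l(A.A.A)=G_l$, and then Farah's rigidity theorem for approximate homomorphisms \cite{Far} to produce an element $g\in\prod_9 S$ killed by $\pi_l$ but far from $1$ in $G_s$; conjugating by $S$ and counting cosets of $\CC(g)$ then gives $|\prod_{12}S|\gg|G|^{\d'-\d}\prod_{i\in I_l}D_i$. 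Your outline has no substitute for this step, and the naive ``if no concentration occurs, combine the factor-wise gains'' is where the proof would break. Finally, the paper does not argue by contradiction from an assumed bound on $|\prod_3 S|$; it directly proves lower bounds on $|\prod_{2^{m+1}}S|$ and $|\prod_{12}S|$ and translates back to $|\prod_3 S|$ using Helfgott's inequality (\ref{eq_iterated}) --- a structurally cleaner route that avoids the Ruzsa-covering bookkeeping you invoke at the start.
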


\subsection{Proof of Theorem \ref{thm_prod} using Proposition
\ref{prp_prod}}
\label{sec_th3}

We make use of the following result which appeared
first implicitly in the proof of Proposition 2 in Bourgain,
Gamburd \cite{BG1}.
\begin{lem}[Bourgain, Gamburd]
\label{lem_bszg}
Let $\mu$ and $\nu$ be two probability measures on an arbitrary
group $G$ and let $K>2$ be a number.
If
\[
\|\mu*\nu\|_2>\frac{\|\mu\|_2^{1/2}\|\nu\|_2^{1/2}}{K}
\]
then there is a symmetric set $S\subset G$ with
\[
\frac{1}{K^R\|\mu\|_2^2}\ll |S|\ll \frac{K^R}{\|\mu\|_2^2},
\]
\[
|{\textstyle\prod_3 S}|\ll K^R|S|\quad {\rm and}
\]
\[
\min_{g\in S}\left(\wt \mu *\mu\right)(g)\gg \frac{1}{K^R|S|},
\]
where $R$ and the implied constants are absolute.
\end{lem}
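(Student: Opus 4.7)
The plan is to extract $S$ as a dyadic level set of the autocorrelation $\wt\mu*\mu$ and then invoke a non-commutative Balog--Szemer\'edi--Gowers (BSG) argument to bound $|\prod_3 S|$.

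A direct expansion of $\|\mu*\nu\|_2^2$ yields the standard identity
\[
\|\mu*\nu\|_2^2 \;=\; \langle\wt\mu*\mu,\ \nu*\wt\nu\rangle.
\]
Since $\nu*\wt\nu$ is a probability measure with $L^\infty$ bound $\|\nu\|_2^2$, we have $\|\nu*\wt\nu\|_2\le\|\nu\|_2$, so Cauchy--Schwarz on the right-hand side reduces the hypothesis to
\[
\sum_{g\in G}(\wt\mu*\mu)(g)^2 \;\gg\; \|\mu\|_2^2\,K^{-4}.
\]
I would then dyadically pigeonhole on the values of $\wt\mu*\mu$. Because $(\wt\mu*\mu)(g)\le\|\mu\|_2^2$ and any scale $\alpha$ contributes at most $\alpha$ to the $\ell^2$-sum (using $\alpha^2|S_\alpha|\le\alpha\|\wt\mu*\mu\|_1=\alpha$), only scales $\alpha\in[\|\mu\|_2^2 K^{-O(1)},\ \|\mu\|_2^2]$ matter, giving $O(\log K)$ significant scales. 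A pigeonhole therefore produces a scale $\alpha$ for which the level set $S_\alpha:=\{g:(\wt\mu*\mu)(g)\sim\alpha\}$ satisfies $\alpha^2|S_\alpha|\gg\|\mu\|_2^2 K^{-O(1)}$. Combining this with the trivial inequalities $\alpha|S_\alpha|\le 1$ and $\alpha\le\|\mu\|_2^2$ pins $|S_\alpha|$ between $K^{-O(1)}/\|\mu\|_2^2$ and $K^{O(1)}/\|\mu\|_2^2$, and forces $\alpha|S_\alpha|\gg K^{-O(1)}$. After symmetrising (replacing $S_\alpha$ by $S_\alpha\cap\wt{S_\alpha}$) one obtains the symmetric set $S$ with the claimed size bounds and the pointwise lower bound $\min_{g\in S}(\wt\mu*\mu)(g)\gg 1/(K^R|S|)$.

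The main obstacle is the tripling estimate $|\prod_3 S|\ll K^R|S|$. Here I would invoke a non-commutative BSG theorem in the form developed by Tao and used by Bourgain--Gamburd in \cite{BG1}. The defining property of $S$ says that each $s\in S$ admits $\gg 1/(K^R|S|)$ weight of representations $s=a^{-1}b$ with $\mu(a)\mu(b)$ large; a further dyadic pigeonhole on the values of $\mu$ extracts a level set $A\subset\supp\mu$ (of size $\sim|S|$ up to $K^{O(1)}$) with large multiplicative energy. Non-abelian BSG then produces $A'\subset A$ with $|A'|\gg|A|/K^{O(1)}$ and $|\prod_3 A'|\ll K^{O(1)}|A'|$, and iterated Ruzsa triangle inequalities transfer small tripling from $A'$ to $S$ since most of $S$ lies in a boundedly iterated product-set of $A'$. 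The technical difficulty is that Pl\"unnecke's inequality is unavailable non-commutatively, so the passage from energy to tripling must go through Ruzsa covering and iterated triangle inequalities, which is where the absolute exponent $R$ ultimately arises.
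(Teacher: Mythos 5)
Your opening reduction is cleaner than the paper's: you observe $\|\mu*\nu\|_2^2=\langle\wt\mu*\mu,\nu*\wt\nu\rangle$ and apply Cauchy--Schwarz together with $\|\nu*\wt\nu\|_2\le\|\nu\|_2$ to eliminate $\nu$ and work only with the autocorrelation $\wt\mu*\mu$, whereas the paper dyadically decomposes both $\mu$ and $\nu$ and pigeonholes on a pair of level sets. The size bounds and the pointwise lower bound for your level set $S_\alpha$ are then correct and immediate, and your extraction of a level set $A$ of $\mu$ with multiplicative energy $\gg|A|^3/K^{O(1)}$ is also sound.

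The gap is in the tripling estimate, and it is genuine. You define $S$ \emph{first}, as a level set of $\wt\mu*\mu$, and then try to establish $|\prod_3 S|\ll K^R|S|$ after the fact via BSG. Non-abelian BSG gives a subset $A'\subset A$ with $|A'|\gg|A|/K^{O(1)}$ and small tripling, but it gives no control relating $A'$ to your particular set $S_\alpha$. The assertion that ``most of $S$ lies in a boundedly iterated product-set of $A'$'' does not follow from what you have: $A'$ could be chosen by BSG so that the representations $s=a^{-1}b$ of elements of $S_\alpha$ use $a,b\in A\setminus A'$, and even if a positive proportion of $S_\alpha$ lands in $\wt{A'}.A'$, the remaining exceptional elements of $S_\alpha$ could blow up $\prod_3 S_\alpha$; Ruzsa's triangle inequality requires $S$ itself (not merely most of it) to sit inside a small product-set. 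The paper avoids this entirely by taking $S$ \emph{from} Tao's construction --- $S=\{g:|A\cap(A.\{g\})|>|A|/C\}$ with $C=2|A.\wt A|/|A|$ in the proof of \cite[Proposition~4.5]{Tao} --- so that symmetry and small tripling are built in, and only \emph{then} deduces the pointwise lower bound $\wt\mu*\mu(g)\gg 1/(K^R|S|)$ directly from that definition (since membership in Tao's $S$ forces $\gg|A|/K^{O(1)}$ representations $g=a^{-1}b$ with $a,b\in A$, each contributing $\gg\|\mu\|_2^4/K^{O(1)}$ to $\wt\mu*\mu(g)$). If you want to keep your autocorrelation reduction, the fix is to swap the order: run BSG and Tao's covering argument on $A$ to \emph{produce} $S$, rather than defining $S$ as a level set of $\wt\mu*\mu$ and trying to retrofit the tripling bound.
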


\begin{proof}
We include the proof only for the sake of completeness,
the argument is essentially the same as in the proof of 
\cite[Proposition 2]{BG1}.

First we note that by Young's inequality
$\|\mu*\nu\|_2\le\|\mu\|_2$ and hence
$\|\nu\|_2<K^2\|\mu\|_2$ and similarly $\|\mu\|_2<K^2\|\nu\|_2$.
Let $\l$ be a nonnegative measure with $\|\l\|\le1$ and
$\|\l\|_2^2<c$.
Observe that if $\l(g)\ge K'c$ for some $K'$ for every $g\in\supp\l$,
then $\|\l\|_2^2\ge K'c\|\l\|_1$, hence $\|\l\|_1<1/K'$.
Similarly, if $\l(g)\le c/K'$ for all $g$, then $\|\l\|_2^2<c/K'$.
Now define the sets
\bean
A_i&=&\{g\in G\sep 2^{i-1}\|\mu\|_2^2<\mu(g)
\le 2^{i}\|\mu\|_2^2\}\quad{\rm and}\\
B_i&=&\{g\in G\sep 2^{i-1}\|\nu\|_2^2<\nu(g)\le 2^{i}\|\nu\|_2^2\}
\eean
for $|i|<10 \log K$.
By Young's inequality,
\[
\|\mu*\nu\|_2\le\sum_{|i|,|j|\le10\log K}
2^{i+j}\|\mu\|_2^2\|\nu\|_2^2|A_i||B_j|
\|\chi_{A_i}*\chi_{B_j}\|_2
+K^{-5}(\|\mu\|_2+\|\nu\|_2),
\]
hence there must be a pair of indices $i,j$ such that
\be\label{eq_mesconv1}
2^{i+j}\|\mu\|_2^2\|\nu\|_2^2\||A_i||B_j|\|\chi_{A_i}*\chi_{B_j}\|_2
\gg\frac{\|\mu\|_2^{1/2}\|\nu\|_2^{1/2}}
{K\log^2 K}.
\ee
By construction, for $g\in A_i$ we have
\[
2^i\|\mu\|_2^2\gg\mu(g)\gg2^i\|\mu\|_2^2,
\]
and by (\ref{eq_mesconv1}) and Young's inequality,
$1\ge\mu(A_i)\gg1/K^R$.
Here, and everywhere $R$ denotes an absolute constant
which need not be the same at different occurrences.
These together give
\[
\frac{K^R}{\|\mu\|_2^2}\gg|A_i|\gg\frac{1}{K^R\|\mu\|_2^2}.
\]
We may get the analogous inequalities
\[
\frac{K^R}{\|\mu\|_2^2}\gg|B_j|\gg\frac{1}{K^R\|\mu\|_2^2}.
\]
in a similar way and using the relations between $\|\mu\|_2$
and $\|\nu\|_2$.
Applying our inequalities to $(\ref{eq_mesconv1})$,
we get
\[
\|\chi_{A_i}*\chi_{B_j}\|_2^2\gg\frac{1}{K^R|A_i|^{1/2}|B_j|^{1/2}}.
\]
We invoke the non-commutative version of the Balog-Szemer\'edi-Gowers
theorem proven by Tao \cite[Theorem 5.2]{Tao}, (note that we use
a different normalization).
This gives subsets $A\subset A_i$ and $B\subset B_i$
with $|A|\gg|A_i|/K^R$ and $|A.B|\ll K^R|A|^{1/2}|B|^{1/2}$.
Ruzsa's triangle inequality \cite[Lemma 3.2]{Tao}
for the sets $A$ and $\wt B$
gives $|A.\wt A|\ll K^R|A|$.
Using \cite[Proposition 4.5]{Tao} with $n=3$, we get a
symmetric set $S$ with $|S|>|A|/K^R$ and
\[
|\textstyle\prod_3 S|\ll K^R|A|\ll K^{R'}|S|.
\]
In the proof of Proposition 4.5 of \cite{Tao}
the set $S$ is defined by
\[
\{g\in G\sep|A\cap(A.\{g\})|>|A|/C\}
\]
with $C=2|A.\wt A|/|A|$.
For $g\in S$, we have
\[
\left(\wt \mu*\mu\right)(g)\ge2^{2i-2}\|\mu\|_2^4|A\cap(A.\{g\})|
\gg\frac{1}{K^R|S|}.
\]
The expression in the middle is bounded below by $\|\mu\|_2^2/K^R$ also,
which gives the required upper bound for $|S|$, since
$\|\wt \mu*\mu\|_1=1$.
\end{proof}

\begin{proof}[Proof of Theorem \ref{thm_prod}]
Assume that the conclusion of the theorem fails, i.e.
that there is an $\e$ such that for any $\d$ there
are probability measures $\mu$ and $\nu$ with
\[
\|\mu\|_2>|G|^{-1/2+\e}\quad{\rm and}\quad
\mu(gH)<[G:H]^{-\e}
\]
for any $g\in G$ and for any proper $H<G$, and yet
\[
\|\mu*\nu\|_2\ge\|\mu\|_2^{1/2+\d}\|\nu\|_2^{1/2}.
\]
Take $K=\|\mu\|_2^{-\d}$ in Lemma \ref{lem_bszg}.
Note that by the third property of the set, $S$
we have
\[
\chi_S(gH)\ll K^R\wt \mu*\mu(gH)\le K^R\max_{h\in G} \mu(hH)
\ll|G|^{R\d}[G:H]^{-\e}.
\]
Now $|\prod_3 S|\ll K^R|S|$ contradicts Proposition \ref{prp_prod},
if $\d$ is small enough.
\end{proof}

\subsection{Proof of Proposition \ref{prp_prod}} 
\label{sec_prp14}

Throughout sections \ref{sec_prp14}--\ref{sec_ld}, we assume
that $G=G_1\times\ldots\times G_n$ satisfies (A0)--(A5) with
some $L$.
$\e$ and $S$ are the same as in Proposition \ref{prp_prod}, and
we fix a sufficiently small $\d$.
By sufficiently small, we mean that we are free to use inequalities
$\d<\d'$, where $\d'$ is any function of $\e$ and the constants in
(A1)--(A5).
We use $c,\d',\d'',Q,Q'$, etc. to denote positive constants that may
depend only on $\e$ and the constants in (A1)--(A5).
These need not be the same at different occurrences.
We will also use inequalities of the form
\be
\label{eq_size}
Q\log|G_i|<|G_i|^{\d\d'}.
\ee
Let $N$ be the product of those factors $G_i$, for which
such an inequality fails.
Since the same group appears at most $L$ times among the $G_i$,
the size of $N$ is bounded.
Replace $G$ by $G/N$.
For any $H<G/N$, we have $[G/N:H]=[G:HN]$ and if $\bar S$ denotes
the projection of $S$ in $G/N$, then we have
$|\prod_3 S|\ge|\prod_3 \bar S|$ and $|S|\le|\bar S||N|$.
Hence the theorem for the group $G/N$ implies itself for $G$
with a larger implied constant.
Thus we can use (\ref{eq_size}) without loss of generality.

In a similar fashion we may replace each $G_i$ by $G_i/Z(G_i)$,
hence from now on, we assume that all the $G_i$ are simple.
This may introduce a factor of size at most $L^n$ which is $\ll |G|^\d$
for any $\d>0$.

We follow the argument of Bourgain, Gamburd and Sarnak
\cite[Section 5]{BGS}.
First we introduce some notation.
Denote $\pi_i$ for $1\le i\le n$ the projection
from $G$ to $G_i$.
Set $G_{\le i}=\times_{j\le i}G_i$ and denote
$\pi_{\le i}$ the projection from $G$ to $G_{\le i}$.
To the set $S$, we associate a tree of $n+1$ levels.
Level 0
consists of a single vertex, while for $i>0$ the vertices of level
$i$ are the elements of the set $\pi_{\le i}(S)$,
and a vertex $g$ on level $i-1$ is connected to those
vertices on level $i$ which are of the form $(g, h)$ with
some $h\in G_{i}$.
By removing some vertices, we can get a regular tree, that is a
tree which has vertices of equal degree on each level.
More precisely, using
\cite[Lemma 5.2]{BGS} we obtain a subset $A\subset S$
and a sequence $\{D_i\}_{1\le i\le n}$ of positive integers
with $D_i\ge|G_i|^\d$ or $D_i=1$ such that for any
$g\in\pi_{\le i-1}A$, we have
\[
|\{h\in G_{i}\sep (g,h)\in\pi_{\le i}(A)\}|=D_{i},
\]
and
\be
\label{eq_A}
|A|>\left[\prod_{i=1}^n(|G_i|^\d\log|G_i|)\right]^{-1}|S|>|G|^{-2\d}|S|.
\ee
The second inequality in (\ref{eq_A}) is of type (\ref{eq_size}).

We briefly outline the proof.
Consider the set $\prod_k A$ for some integer $k$ and the tree
associated to it in the way described above.
If $g\in\pi_{\le i-1}\left(\prod_k A\right)$ is a vertex on level $i-1$ and
$g=g_1\cdots g_k$ with $g_l\in\pi_{\le i-1}(A)$,
then $(g,h)$ is connected to $g$ for every $h$ in the product-set
\[
\{h_1\sep(g_1,h_1)\in\pi_{\le i}(A)\}\ldots
\{h_k\sep(g_k,h_k)\in\pi_{\le i}(A)\}.
\]
Let $I_s$ be the set of indices $1\le i\le n$ for which
$D_{i}<|G_{i}|^{1-1/3L}$ (i.e. indices corresponding to small
degrees), for such an index, there is hope that
we can apply (A4) for $G_i$ and get that the above product-set
is of size $D_{i}^{1+\d'}$ for some $\d'>0$.
We make this speculation precise
in section \ref{sec_sd}.
Set $I_l=\{1,\ldots,n\}\sm I_s$ (indices corresponding to large degrees),
$G_s=\times_{i\in I_s}G_i$ and
$G_l=\times_{i\in I_l}G_i$, and denote by
$\pi_s$ and $\pi_l$ the projections from $G=G_s\times G_l$
to $G_s$ and $G_l$ respectively.
We get from a result of Gowers \cite{Gow} that
$\pi_l(S.S.S)=G_l$.
In subsection \ref{sec_ld}, we prove using a result
of Farah \cite{Far} on approximate homomorphisms that
$\pi_l^{-1}(1)\cap\prod_9 S$ contains an element $g$
whose centralizer $\CC(g)$ is of large 
index.
Then $S$ will contain elements from at least
$[G:\CC(g)]^{\e}|G|^{-\d}$ cosets of $\CC(g)$, hence
there are many $h\in\prod_{11} S$ with $\pi_l(h)=1$,
and $\prod_{12} S$ is much larger than $G_l$.

Finally, we mention that there is a useful result of
Helfgott \cite[Lemma 2.2]{Hel}
that allows us to bound $|S.S.S|$ in terms of larger iterated
product-sets.
He proves that if
$S$ is a symmetric subset of an arbitrary group $G$ and $k\ge 3$
is an integer, then
\be
\label{eq_iterated}
\frac{|{\textstyle\prod_k}S|}{|S|}\le
\left(\frac{|S.S.S|}{|S|}\right)^{k-2}.
\ee

\subsection{The case of many small degrees}
\label{sec_sd}

In this section we prove
\begin{prp}
\label{prp_sd}
There are positive constants $\d'$ and
$Q$ depending only on $\e$ and
the constants in the assumptions, such that 
\[
{|\textstyle\prod_{2^{m+1}}S|}>|S||G|^{-Q\d}
\prod_{i\in I_s}D_i^{\d'},
\]
where $m$ is the same as in (A5).
\end{prp}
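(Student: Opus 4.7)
The strategy is to descend through the hierarchy of subgroup classes $\HH_m,\ldots,\HH_0$ supplied by (A5), running simultaneously over every small-degree coordinate $i\in I_s$ and accumulating a gain of $D_i^{\d'}$ at each such coordinate via the single-factor product theorem (A4). The $2^{m+1}$ doublings in the exponent reflect the fact that one doubling of the product set is spent in stepping between consecutive hierarchy levels, and there are $m+1$ levels to traverse.

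The starting point is the regular tree structure of the set $A\subset S$ from (\ref{eq_A}). I would form the iterated product sets $T_j=\prod_{2^j}A$ for $j=0,\ldots,m+1$ and, for each $i\in I_s$ and each base vertex $g$ at level $i-1$ of the associated tree, consider the fiber $B_{g,j}\subset G_i$ consisting of the children of $g$ in $T_j$. The goal is to show that on average $|B_{g,j+1}|\ge D_i^{\d'}|B_{g,j}|$ at every $i\in I_s$; iterating $m+1$ times then delivers the claim. To apply (A4) at coordinate $i$, one needs a lower bound on $\|\chi_{B_{g,j}}\|_2$ (available because $i\in I_s$ means $D_i<|G_i|^{1-1/3L}$) and a non-concentration estimate $\chi_{B_{g,j}}(hH)<|G_i|^{-\e}$ for every proper $H<G_i$. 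The global hypothesis $\chi_S(gH)<[G:H]^{-\e}|G|^\d$, applied to the lifted subgroup $H\times\prod_{k\neq i}G_k<G$, furnishes this estimate up to a loss of $|G|^{O(\d)}$ after averaging over the base vertices.

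If the non-concentration estimate holds, (A4) yields the gain directly. If it fails, (A5.iii) locates the excess mass in some $H^\sharp\in\HH_{j'}$, and (A5.iv) lets us descend to the next hierarchy level at the cost of a doubling, recursing until we reach the terminal class $\HH_0$ where non-concentration is forced (and would contradict the lower bound on $\|\chi_{B_{g,j}}\|_2$ unless the required gain has already been obtained along the way). The main obstacle is to orchestrate this dichotomy coherently over all coordinates $i\in I_s$: the fibers $B_{g,j}$ are coupled through the global structure of $T_j$, so the gain-or-concentration decision must be made uniformly, and the aggregate $|G|^{O(\d)}$ losses from the coordinatewise applications of (A4) must be kept below $|G|^{Q\d}$. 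The latter relies on (A1) to bound the number of isomorphic factors and on (A3) in the form (\ref{eq_index}) to ensure that every proper $H<G_i$ has index comparable to $|G_i|$, so that the constant in the loss does not accumulate destructively across the sequential applications.
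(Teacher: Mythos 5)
Your plan identifies the right ingredients—the tree structure on $A$, the single-factor product theorem (A4) as the engine of gain, the hierarchy (A5) as the tool for handling concentration, and the $2^{m+1}$ products as a budget—but the mechanism you propose has a gap that the paper's argument was specifically designed to avoid.

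The central difficulty is that the fibers $B_{g,j}\subset G_i$ of the tree over a \emph{particular} base vertex $g$ may be arbitrarily badly concentrated (e.g.\ they could be exactly a coset of a proper subgroup $H<G_i$), and the global hypothesis $\chi_S(gH)<[G:H]^{-\e}|G|^\d$ does \emph{not} control this. Lifting $H$ to $H\times\prod_{k\neq i}G_k$ gives non-concentration of the full projection $\pi_i(\chi_S)$ on cosets of $H$, but says nothing about the conditional distribution of $\pi_i$ given the fiber over a fixed $g$. Averaging over base vertices (as you suggest) merely reproduces the global bound and cannot rule out that many fibers are concentrated — indeed, the bound is an $L^1$ average and does not yield a pointwise statement. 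The paper's fix is not a descent-by-dichotomy; it is a \emph{randomization}. It first constructs (Lemma~\ref{lem_B}) a subset $B\subset S$ and a small exceptional index set $J_b$, so that for every good index $i$ the projection of $\chi_B$ to $G_i$ is uniformly non-concentrated. Then, instead of applying (A4) directly to a fiber $\chi_{A'}$, it forms $\l_m=\chi_{A_1}*1_{y_1}*\chi_{A_2}*\cdots*1_{y_{2^m-1}}*\chi_{A_{2^m}}$, where the $y_j=\pi_i(x_j)$ are independent samples from $\chi_B$; the interleaved random elements destroy any coset concentration inherited from the fibers. The role of (A5) in this scheme is also not the one you describe: it does not "locate excess mass and let you recurse downward." Rather, in Lemma~\ref{lem_lambda} it serves a \emph{counting} purpose — (A5.iv) bounds the number of subgroups $H\in\HH_k$ on which $\wt\l_{k-1}*\l_{k-1}$ can be heavy, so that the set of $y$'s producing concentration at level $k$ has small $\chi_B$-measure; the hierarchy depth $m$ thereby translates into $2^m-1$ random insertions, which is the true source of the $2^{m+1}$ (namely $2^m+1$ copies of $A$ interleaved with $2^m-1$ elements of $B\subset S$), not "one doubling per level."

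Finally, the aggregation over coordinates — which you correctly flag as the remaining obstacle — is handled in the paper by an entropy calculation: one sets $\mu=\chi_A^{\otimes(2^m+1)}$, introduces $\sigma$-algebras $\AA_i$ (projections to $G_{\le i}$) and $\BB$ (the fiber of the randomized product map), and computes $\E[H_\mu(\BB)]\ge\sum_i\E[H_\mu(\BB\land\AA_i\,|\,\AA_{i-1})]$. Each conditional term contributes $\log D_i$ plus an extra $\d'\log D_i$ at each $i\in I_s\cap J_g$ via Corollary~\ref{cor_l}, and the inequality $|\supp\mu|\ge e^{H_\mu}$ converts the entropy bound into the claimed cardinality bound. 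This bypasses the coupling problem you raise cleanly; "averaging" alone, without the entropy bookkeeping, does not give a multiplicative gain across coordinates. So the proposal needs two additional ideas to become a proof: the construction of the mixing set $B$ with coordinatewise non-concentration, and the conditional-entropy aggregation.
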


The biggest issue here is that beside its size, we have no
information about a set of form $\{b\sep(a,b)\in\pi_{\le i}(A)\}$.
A large part of it might be contained in a coset of a proper
subgroup and then (A4) does not apply with $\mu$ being the
normalized counting measure on that set.
To resolve this problem, we multiply sets of this form together
with random elements of $G_i$.
We need to construct a probability distribution supported on $S$
whose projection to most factors $G_i$ is well-behaved in the following sense.
\begin{lem}
\label{lem_B}
There is a subset $B\subset S$, and there is a partition of the indices
$1,\ldots,n$ into two parts $J_g$ and $J_b$ such that
\be
\label{eq_B2}
\prod_{i\in J_b}|G_i|\le|G|^{\d/\d'},
\ee
and for any $i\in J_g$ and for any proper coset $gH\subset G_i$,
we have
\be
\label{eq_B1}
\chi_B(\{x\in G\sep\pi_i(x)\in gH\})\le|G_i|^{-\d'},
\ee
where $\d'>0$ is a constant
depending on $\e$ and on $L$.
\end{lem}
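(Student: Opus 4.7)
The plan is to construct $B$ by a greedy algorithm that restricts $S$ whenever some factor $G_i$ concentrates the mass of the current set on a single coset. Initialize $B_0 := S$ and $J_b^{(0)} := \emptyset$. At stage $k$, check whether there exist an index $i \notin J_b^{(k)}$, a proper subgroup $H_i < G_i$, and a coset $g_i H_i$ such that
\[
\chi_{B_k}(\pi_i^{-1}(g_i H_i)) > |G_i|^{-\d'}.
\]
If so, set $B_{k+1} := B_k \cap \pi_i^{-1}(g_i H_i)$, put $J_b^{(k+1)} := J_b^{(k)} \cup \{i\}$, and iterate; otherwise halt. Each stage adjoins a new index, so the procedure terminates in at most $n$ steps; and since the stopping condition is a strict inequality with positive right-hand side, $B_k$ is never empty. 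Let $B$ and $J_b$ be the output and set $J_g := \{1,\ldots,n\} \setminus J_b$. The termination condition is precisely (\ref{eq_B1}) for all $i \in J_g$.

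For the size bound (\ref{eq_B2}), I would compare an upper bound on $|B|/|S|$ coming from the hypothesis on $S$ with a lower bound from the construction. By construction $B$ is contained in a coset of the proper subgroup $H := \bigcap_{i \in J_b} \pi_i^{-1}(H_i)$ of $G$, whose index is $[G:H] = \prod_{i \in J_b}[G_i:H_i]$. Since $B \subset S$, the hypothesis on $S$ gives
\[
\frac{|B|}{|S|} \le \chi_S(\text{coset of } H) < [G:H]^{-\e}|G|^\d,
\]
while at each restriction stage $|B_{k+1}|/|B_k| > |G_i|^{-\d'}$, so telescoping gives
\[
\frac{|B|}{|S|} > \prod_{i \in J_b} |G_i|^{-\d'}.
\]

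Combining these bounds and invoking (\ref{eq_index}) (which applies to the simple $G_i$ after the reductions performed at the start of section \ref{sec_prp14}) to lower bound $[G_i:H_i] \ge |G_i|^{1/L}$ yields
\[
\prod_{i \in J_b} |G_i|^{\e/L - \d'} < |G|^\d.
\]
Choosing $\d' := \e/(2L)$ then gives $\prod_{i \in J_b} |G_i| < |G|^{2L\d/\e} = |G|^{\d/\d'}$, which is (\ref{eq_B2}). There is no real obstacle here: the argument is of the same iterated-restriction flavour as in \cite[Section 5]{BGS}, and the only care required is to pick $\d'$ small enough compared to $\e/L$ so that the two bounds on $|B|/|S|$ become incompatible unless $J_b$ accounts for only a negligible portion of $|G|$.
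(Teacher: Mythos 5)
Your proposal is correct and follows essentially the same argument as the paper: the same greedy restriction algorithm to build $B$ and $J_b$, the same two-sided comparison of $\chi_S(B)$ (lower bound $\prod_{i\in J_b}|G_i|^{-\d'}$ from the construction, upper bound from the hypothesis on $S$ combined with (\ref{eq_index})), and the same final choice $\d'=\e/(2L)$.
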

\begin{proof}
We obtain the set $B$ by the following algorithm.
First set $B=S$ and $J_g=\{1,\ldots,n\}$.
Then iterate the following step as long as possible.
If there is an index $i\in J_g$ and a coset $gH\subset G_i$
such that (\ref{eq_B1}) fails, then replace $B$ by
\[
\{x\in B\sep\pi_i(x)\in gH\}
\]
and put $i$ into $J_b$.
It is clear that (\ref{eq_B1}) holds when this process terminates.
As for (\ref{eq_B2}), note that
\[
\chi_S(B)\ge\prod_{i\in J_b} |G_i|^{-\d'}
\]
and $B$ is contained in a coset of a subgroup of index at least
$\prod_{i\in J_b}|G_i|^{1/L}$ by (\ref{eq_index}).
These together and the assumption of Proposition \ref{prp_prod} on $S$
imply
\[
\prod_{i\in J_b} |G_i|^{-\d'}<
\left(\prod_{i\in J_b}|G_i|^{1/L}\right)^{-\e}|G|^{\d},
\]
and (\ref{eq_B2}) follows easily if we set $\d'=\e/2L$.
\end{proof}

Now assume that $i\in J_g$.
Then, starting from arbitrary sets $A_1,\ldots,A_{2^m}\subset G_i$
of the same size $|G_i|^\d<D<|G_i|^{1-1/3L}$, we construct a measure $\l_m$
for which (A4) is applicable.

Choose the elements $x_j$ for $1\le j\le 2^m-1$ independently
at random according to the distribution $\chi_B$.
Set $y_j=\pi_i(x_j)$.
For $0\le k\le m$ define
\[
\l_k=\chi_{A_1}*1_{y_1}*\chi_{A_2}*1_{y_2}*\ldots
*1_{y_{2^k-1}}*\chi_{A_{2^k}},
\]
where $1_y$ denotes
the unit mass measure at $y$.

\begin{lem}
\label{lem_lambda}
If $i\in J_g$, then there is a constant $\d'$
depending only on $\e$ and $L$ such that
the probability of the event that
\be
\label{eq_lambda}
\l_k(gH)<D^{-\d'/10^k}
\ee 
holds for any proper coset $gH<G_i$, if $H\in\HH_l$ for some
$l\le k$ is at least
\be
\label{eq_prob}
1-(2^k-1)|G_i|^{-\d'}.
\ee

\end{lem}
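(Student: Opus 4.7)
The plan is to prove the lemma by induction on $k$. The base case $k=0$ is immediate: $\lambda_0=\chi_{A_1}$ satisfies $\chi_{A_1}(\{g\})\le 1/D\le D^{-\delta'}$ for $\delta'\le 1$, and after reducing each $G_i$ to its simple quotient $\HH_0$ consists only of the trivial subgroup, so the claim is deterministic. For the inductive step I will split $\lambda_k=\mu_1*1_y*\mu_2$, where $y=y_{2^{k-1}}$ and $\mu_1,\mu_2$ are independent level-$(k-1)$ measures built from the two halves of the $A_j$'s and the remaining $y_j$'s. By the inductive hypothesis applied to each half and a union bound, both $\mu_1$ and $\mu_2$ satisfy the level-$(k-1)$ conclusion with probability at least $1-(2^k-2)|G_i|^{-\delta'}$, which I will condition on.

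For a subgroup $H\in\HH_l$ with $l\le k-1$ the inductive bound transfers deterministically, without using the randomness of $y$. Writing $\lambda_k(gH)=\sum_b\mu_2(b)\mu_1(gHb^{-1}y^{-1})$ and noting that $gHb^{-1}y^{-1}$ is a left coset of the conjugate subgroup $ybHb^{-1}y^{-1}\in\HH_l$ (by (A5)(ii)), the inductive bound on $\mu_1$ yields each summand $\le D^{-\delta'/10^{k-1}}$, so $\lambda_k(gH)<D^{-\delta'/10^{k-1}}\le D^{-\delta'/10^k}$. The genuinely new case is $H\in\HH_k$: writing $\lambda_k(gH)=\sum_{a,b}\mu_1(a)\mu_2(b)\mathbf{1}[y\in a^{-1}gHb^{-1}]$ and observing that $a^{-1}gHb^{-1}$ is a coset of the proper subgroup $bHb^{-1}\in\HH_k$, Lemma \ref{lem_B} (which applies since $i\in J_g$) bounds the $\rho$-measure of every such coset by $|G_i|^{-\delta_B}$. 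Hence $E_y[\lambda_k(gH)]\le|G_i|^{-\delta_B}$ and Markov's inequality gives $\Pr_y[\lambda_k(gH)\ge D^{-\delta'/10^k}]\le|G_i|^{-\delta_B+\delta'/10^k}$, which is at most $|G_i|^{-\delta_B/2}$ once $\delta'$ is chosen small enough.

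The main obstacle will be upgrading this pointwise-in-$(g,H)$ bound to a simultaneous bound over all $H\in\HH_k$ and cosets $gH$. My plan is to split further: the deterministic estimate $\|\lambda_k\|_\infty\le 1/D$ (from Young's inequality applied with the factor $\chi_{A_1}$) gives $\lambda_k(gH)\le|H|/D$, which already handles every $H$ with $|H|\le D^{1-\delta'/10^k}$ without any use of $y$; for the remaining larger subgroups the coset count $|G_i|/|H|$ is much smaller, and a union bound on the Markov estimate closes the argument, using (A5)(iv) (which forces the near-disjointness of distinct subgroups in $\HH_k$) to keep $|\HH_k|$ polynomially bounded in $|G_i|$. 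Choosing $\delta'$ sufficiently small relative to $\delta_B$ and this polynomial degree bounds the $y$-step failure probability by $|G_i|^{-\delta'}$, giving a total failure probability of $(2^k-2)|G_i|^{-\delta'}+|G_i|^{-\delta'}=(2^k-1)|G_i|^{-\delta'}$, as desired.
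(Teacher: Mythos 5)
Your base case, the split $\l_k=\l_{k-1}*1_y*\eta_{k-1}$, the deterministic reduction for $H\in\HH_l$ with $l<k$, and the first-moment (Markov) estimate for a single coset $gH$ with $H\in\HH_k$ are all correct and follow the same route as the paper. The gap is in the union bound. Your Markov step gives, per fixed coset $gH$, a failure probability of order $|G_i|^{-\d_B+\d'/10^k}$, where $\d_B=\e/2L$ is the exponent of Lemma \ref{lem_B}. But already for a single $H$ with $|H|>D^{1-\d'/10^k}$, the number of cosets $[G_i:H]$ is at least $|G_i|^{1/L}$ by (\ref{eq_index}) (and, since $D$ may be as small as $|G_i|^{\d}$, can be as large as roughly $|G_i|^{1-\d}$). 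Hence summing even over the cosets of that one $H$ gives a bound of order $|G_i|^{1/L-\d_B}=|G_i|^{(1-\e/2)/L}\gg1$; the argument fails before one even reaches the question of $|\HH_k|$. Moreover, (A5)(iv) controls only pairwise intersections of distinct subgroups in $\HH_k$ and gives no polynomial bound on $|\HH_k|$, so the parenthetical justification you offer is also unfounded.

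The paper avoids this by showing that the bad events for $y$ are far more concentrated than a union-bound count suggests. Writing $\l_k(gH)=\sum_j\l_{k-1}(gHh_j^{-1})\,\eta_{k-1}(y^{-1}h_jH)$ over a left transversal $\{h_j\}$ of $H$, a pigeonhole argument shows that if $\l_k(gH)$ is large then some index $j$ has both factors large; this produces subgroups $H_1,H_2\in\HH_k$ with $\wt\l_{k-1}*\l_{k-1}(H_1)$ and $\eta_{k-1}*\wt\eta_{k-1}(H_2)$ large and with $H_1=yH_2y^{-1}$, confining $y$ to a single coset of $\NN(H_2)$. The decisive extra ingredient is a counting step: by (A5)(iv), two distinct such $H_1$'s intersect $\la_L$-inside some member of $\HH_{k-1}$, on which the inductive hypothesis already bounds $\wt\l_{k-1}*\l_{k-1}$, and an inclusion--exclusion argument then shows the number of admissible pairs $(H_1,H_2)$ is bounded by a tiny power of $D$, not of $|G_i|$. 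Thus the bad set for $y$ is covered by very few normalizer cosets, each of $\pi_i(\chi_B)$-measure $\le|G_i|^{-\d_B}$ by Lemma \ref{lem_B}. This use of the level-$(k-1)$ inductive hypothesis to restrict the supply of large-measure subgroups is the idea missing from your proposal; a raw union bound over cosets and over $\HH_k$ cannot replace it.
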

\begin{proof}
Let $\d'$ be twice the $\d'$ of the previous lemma.
For $k=0$, the claim follows from $L/D<D^{-\d'}$ which is
an inequality of form (\ref{eq_size}).
We assume that $k>0$
and that the claim holds for $k-1$.
Set
\[
\eta_{k-1}=\chi_{A_{2^{k-1}+1}}*1_{y_{2^{k-1}+1}}*
\chi_{A_{2^{k-1}+2}}*1_{y_{2^{k-1}+2}}*\ldots
*1_{y_{2^k-1}}*\chi_{A_{2^k}}
\]
and assume that $y_1,\ldots,y_{2^{k-1}-1}$
and $y_{2^{k-1}+1},\ldots,y_{2^k-1}$ are chosen in such a way
that $\l_{k-1}$ and $\eta_{k-1}$ satisfies
\[
\l_{k-1}(gH)<D^{-\d'/10^{k-1}}\quad{\rm and}\quad
\eta_{k-1}(gH)<D^{-\d'/10^{k-1}}
\]
for subgroups $H\in\HH_{k-1}$.
By the induction hypothesis, the probability of such a choice is at
least $1-(2^k-2)|G_i|^{-\d'}$.
Now assume that $\l_k=\l_{k-1}*1_{y_{2^{k-1}}}*\eta_{k-1}$
violates (\ref{eq_lambda}) for some $g\in G_i$ and $H\in\HH_k$.
To shorten the notation write $y=y_{2^{k-1}}$.
We prove that $y$ is in a set of $\pi_i(\chi_B)$ measure at most
$|G_i|^{-\d'}$,
and this set will depend only on $\l_{k-1}$ and $\eta_{k-1}$,
in particular it will be
independent of the choice of $H$ and $g$. 
Let $\{h_j\}$ be a left transversal for $H$ (i.e. a system of representatives
for left $H$--cosets).
Then it is easy to see that $\{gh^{-1}_j\}$ is a right transversal
for $gHg^{-1}$, hence
\[
\l_k(gH)=\sum_j\l_{k-1}(gHg^{-1}gh_j^{-1})\eta_{k-1}(y^{-1}h_jH)
\]
We claim that for some index $j$, we have
\be
\label{eq_bc}
\l_{k-1}(B_j)\ge D^{-\d'/10^k}/2\quad{\rm and}\quad
\eta_{k-1}(C_j)\ge D^{-\d'/10^k}/2,
\ee
where $B_j=gHh_j^{-1}$ and $C_j=y^{-1}h_j H$.
Assume to the contrary that this fails.
Then we have
\bean
\sum_j\l_{k-1}(B_j)\eta_{k-1}(C_j)
&=&\sum_{j:\l_{k-1}(B_j)< D^{-\d'/10^k}/2}\l_{k-1}(B_j)\eta_{k-1}(C_j)\\
&&+\sum_{j:\eta_{k-1}(C_j)<D^{-\d'/10^k}/2}\l_{k-1}(B_j)\eta_{k-1}(C_j)\\
&<& D^{-\d'/10^k},
\eean
a contradiction.

Let $j$ be such that (\ref{eq_bc}) holds.
Define $H_1=h_jHh_j^{-1}$ and
$H_2=y^{-1}H_1y$.
Notice that $\wt B_j.B_j\subset H_1$ and
$C_j.\wt C_j\subset H_2$.
This shows that there are subgroups $H_1,H_2\in\HH_k$
such that
\be
\label{eq_h1h2}
(\wt\l_{k-1}*\l_{k-1})(H_1)\ge D^{-2\d'/10^k}/4\quad{\rm and}
\quad
(\eta_{k-1}*\wt\eta_{k-1})(H_2)\ge D^{-2\d'/10^k}/4
\ee
and
$H_1=yH_2y^{-1}$.
For fixed $H_1$ and $H_2$, this restricts $y$ to a single
$\NN(H_2)$--coset.
By Lemma \ref{lem_B}, this is a set of $\chi_B$ measure at most
$|G_i|^{\d'/2}$.
The final step is to show that the number of possible pairs
$H_1,H_2$ such that (\ref{eq_h1h2}) holds is at most $|G_i|^{\d'/2}$.

Suppose that we have $M$ distinct subgroups $H_1\in\HH_k$
such that 
\[
\wt \l_{k-1}*\l_{k-1}(H_1)\ge D^{-2\d'/10^k}/4.
\]
If $H_1$ and $H_1'$ are two such subgroups, then
$H_1\cap H_1'\la_L H^\sharp$ for some $H^\sharp\in\HH_{k-1}$.
By the induction hypothesis, we have
$\wt \l_{k-1}*\l_{k-1}(H^\sharp)\le D^{-\d'/10^{k-1}}$,
hence $\wt\l_{k-1}*\l_{k-1}(H_1\cap H_2)\le LD^{-\d'/10^{k-1}}$.
By the inclusion-exclusion principle, we have
\[
MD^{-2\d'/10^k}/4-M^2LD^{-\d'/10^{k-1}}\le1.
\]
This is violated if $M=D^{\d'/4\cdot 10^{k-1}}$,
in fact we need $D^{\d'/2\cdot 10^k}>4(1+L)$,
which is an inequality of form (\ref{eq_size}).
Thus $M<D^{\d'/4\cdot 10^{k-1}}$, and
as the case of $H_2$ is similar, the proof is complete.
\end{proof}

Using property (A4), we get the following
simple
\begin{cor}
\label{cor_l}
Assume that $|G_i|^\d<D<|G_i|^{1-1/3L}$,
and let $A'\subset G_i$ be any set of cardinality $D$.
There is a positive number $\d'$ depending only on $\e$
and the constants in (A1)--(A5) such that for the above
defined $\l_m$, we have
\[
\|\l_m*\chi_{A'}\|_2\ll D^{-1/2-\d'}
\]
with probability at least $1/2$.
\end{cor}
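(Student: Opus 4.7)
The plan is to combine the nonconcentration estimate of Lemma \ref{lem_lambda} with condition (A5)(iii) so as to verify the hypotheses of (A4), and then apply (A4) to the pair $(\l_m, \chi_{A'})$.

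First I would unpack the output of Lemma \ref{lem_lambda}: there is a constant $\d_1>0$ depending only on $\e$ and $L$ such that, with probability at least $1-(2^m-1)|G_i|^{-\d_1}$, the random measure $\l_m$ satisfies $\l_m(gH)<D^{-\d_1/10^m}$ for every proper coset $gH$ of every $H$ lying in some $\HH_l$, $l\le m$. Since $2^m\le 2^L$ and $|G_i|^{-\d_1}$ is dominated by an inequality of the form (\ref{eq_size}), this probability is at least $1/2$. Next I would use (A5)(iii) to promote this to a bound over \emph{all} proper subgroups: if $H<G_i$ is proper, there exists $H^\sharp\in\bigcup_{l\le m}\HH_l$ with $[H:H\cap H^\sharp]\le L$, so $H$ is covered by at most $L$ cosets of $H^\sharp$, and therefore
\[
\l_m(gH)\le L\cdot D^{-\d_1/10^m}
\]
for every $g\in G_i$ and every proper $H<G_i$.

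Now fix $\e_1=\min\{1/(7L),\ \d\d_1/(2\cdot 10^m)\}$, a positive constant depending only on $\e$ and the constants in (A1)--(A5), and split into two cases. If $\|\l_m\|_2\le |G_i|^{-1/2+\e_1}$, then Young's inequality gives $\|\l_m*\chi_{A'}\|_2\le\|\l_m\|_2\le|G_i|^{-1/2+\e_1}$, and using $D<|G_i|^{1-1/(3L)}$ this is bounded by $D^{-1/2-\d_2}$ for some $\d_2>0$. Otherwise $\|\l_m\|_2>|G_i|^{-1/2+\e_1}$, and using $D\ge|G_i|^\d$ together with the coset bound above (and (\ref{eq_size}) to absorb the factor $L$), we verify $\l_m(gH)<|G_i|^{-\e_1}$ for every proper coset. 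Both hypotheses of (A4) at parameter $\e_1$ are then satisfied, producing a $\d_3>0$ with
\[
\|\l_m*\chi_{A'}\|_2\ll\|\l_m\|_2^{1/2+\d_3}\|\chi_{A'}\|_2^{1/2}.
\]
Since $\l_m$ is $\chi_{A_1}$ convolved with probability measures, Young's inequality gives $\|\l_m\|_2\le\|\chi_{A_1}\|_2=D^{-1/2}$, and $\|\chi_{A'}\|_2=D^{-1/2}$, so the right-hand side is $\ll D^{-1/2-\d_3/2}$. Taking $\d'=\min(\d_2,\d_3/2)$ yields the claim.

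The main obstacle is the bookkeeping needed to match scales: the nonconcentration conclusion of Lemma \ref{lem_lambda} is expressed in terms of $D$, but (A4) is phrased in terms of $|G_i|$; the upper bound $D<|G_i|^{1-1/(3L)}$ is what prevents $\|\l_m\|_2$ from being so small that (A4) does not apply, while the lower bound $D\ge|G_i|^\d$ is exactly what converts the $D^{-\d_1/10^m}$ coset bound into a $|G_i|^{-\e_1}$ coset bound. Verifying that both conversions work uniformly for a single $\e_1$ is the only nontrivial accounting; everything else is a direct concatenation of the two lemmas with Young's inequality.
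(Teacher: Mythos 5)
Your proof is correct and follows essentially the same route as the paper: invoke Lemma \ref{lem_lambda} to get nonconcentration on cosets of subgroups in $\bigcup_{l\le m}\HH_l$ with probability $\ge 1/2$, extend to all proper subgroups via (A5)(iii) at the cost of a factor $L$ (absorbed by (\ref{eq_size})), and then split on whether $\|\l_m\|_2$ is large enough for (A4) to apply or small enough that Young's inequality already suffices. The paper's proof is more terse (it leaves the (A5)(iii) step implicit and uses a threshold $|G_i|^{-1/2+1/(12L)}$ rather than your $\e_1$), but the substance is identical.
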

\begin{proof}
By Lemma \ref{lem_lambda} (and using (\ref{eq_size})),
we have with probability at least $1/2$
that $\l_m(gH)<LD^{-\d''}$ with some $\d''>0$ for every proper
coset $gH$.
By (\ref{eq_size}), we have $L<D^{-\d''/2}$.
If say $\|\l_m\|_2>|G_i|^{-1/2+1/12 L}$, then we get 
\[
\|\l_m*\chi_{A'}\|_2\le\|\l_m\|_2^{1/2+\d'}\|\chi_{A}'\|_2^{1/2}
\]
by (A4) with $\mu=\l_m$ and $\nu=\chi_{A'}$.
Otherwise the claim is trivial by Young's inequality.
\end{proof}

In what follows, we need some basic facts about entropy.
Let $\mu$ be a probability measure on $G$, and let $\AA$
be a partition of $G$.
The entropy of $\AA$ is defined by
\[
H_\mu(\AA)=\sum_{A\in \AA}-\mu(A)\log(\mu(A)),
\]
with the convention $0\cdot \log 0=0$.
We also use the notation $H_\mu$ for the entropy of the partition
consisting of one element sets.
The inequalities
\[
|\supp \mu|\ge e^{H_\mu}\ge\frac{1}{\|\mu\|_2^2}
\]
are well-known.
If $B\subset G$, we write $\mu\restr{B}(A)=\mu(A\cap B)/\mu(B)$, and
if $\BB$ is another partition, we define the conditional entropy
by
\[
H_\mu(\AA| \BB)=\sum_{B\in \BB}H_{\mu\restr B}(\AA)\mu(B).
\]
It is easy to see that
\[
H_\mu(\AA\lor\BB)=H_\mu(\AA| \BB)+H_\mu(\BB),
\]
where $\AA\lor \BB$ denotes the coarsest partition that is finer
than both $\AA$ and $\BB$.
On finite sets, partitions and $\sigma$-algebras are essentially the
same, hence we make no distinction.

Finally, we turn to the
\begin{proof}[Proof of Proposition \ref{prp_sd}]
First we introduce a couple of $\sigma$-algebras (partitions)
on the set $A^{\times (2^m+1)}$, i.e. on the $2^m+1$-fold Cartesian
product of $A$.
Let $\AA_i$ be the coarsest $\sigma$-algebra, for which
the projection map
\[
\pi_{\le i}:A^{\times (2^m+1)}\to G_{\le i}^{\times (2^m+1)}
\]
is measurable.
Furthermore, let $\BB$  be the coarsest $\sigma$-algebra, for which
the map
\[
(a_1,\ldots,a_{2^m},a_{2^m+1})\mapsto a_1x_1a_2x_2\cdots x_{2^m -1}
a_{2^m}a_{2^{m}+1}
\]
is measurable, where the elements $x_1,\ldots,x_{2^m-1}$ are chosen
independently at random according to the distribution $\chi_B$,
hence the partition $\BB$ is random.
Denote by $\mu$ the measure $\chi_A^{\otimes (2^m+1)}$
on $A^{\times(2^m+1)}$.
It follows from the definition that the entropy of the measure
\[
\chi_A*1_{x_1}*\chi_A*1_{x_2}*\ldots *1_{x_{2^m-1}}*\chi_A*\chi_A
\]
equals $H_\mu(\BB)$.
We write for the expectation of $H_\mu(\BB)$:
\bean
\E[H_\mu(\BB)]
&\ge&\sum_{i=1}^{n}\E[H_\mu(\BB\land\AA_i|\AA_{i-1})]\\
&\ge&\sum_{i\in I_s\cap J_g}\left(\frac{\log D_i}{2}+
\frac{(1+2\d')\log D_i}{2}-\log c\right)
+\sum_{i\notin I_s\cap J_g}\log D_i\\
&\ge&\log|A|+\sum_{i\in I_s\cap J_g}\d'\log D_i-n\log c.
\eean
The second inequality follows form Corollary \ref{cor_l}
and $c$ is the implied constant there.
And
$\AA\land\BB$ denotes the finest partition that is coarser
than both $\AA$ and $\BB$.
This implies in turn that for some choices of $x_1,\ldots,x_{2^m-1}$,
we have
\[
|A.x_1.A.x_2\ldots x_{2^m-1}.A.A|\ge c^{-n} |A||G|^{-\d}
\prod_{i\in I_s}D_i^{\d'},
\]
where we also used (\ref{eq_B2}).
Note that we can assume $c^n<|G|^{\d}$ by (\ref{eq_size}),
and recall that $|A|>|S||G|^{-2\d}$ by (\ref{eq_A}), hence
Proposition \ref{prp_sd} follows.
\end{proof}

\subsection{The case of many large degrees}
\label{sec_ld}

This section is devoted to the proof of
\begin{prp}
\label{prp_ld}
There is a positive constant $\d'$ depending only on
$\e$ and $L$, such that
\[
{\textstyle|\prod_{12} S|}\ge|G|^{\d'-\d}\prod_{i\in I_l}D_i
\]
\end{prp}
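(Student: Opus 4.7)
The plan is to turn the coset-covering hypothesis of Proposition \ref{prp_prod} into a lower bound for $|\prod_{12}S|$, exploiting that $D_i \geq |G_i|^{1-1/3L}$ for $i \in I_l$. The approach has three stages: first, apply Gowers' theorem on quasi-random groups to obtain $\pi_l(A.A.A) = G_l$; second, use Farah's stability theorem for approximate homomorphisms to produce an element $g \in \pi_l^{-1}(1) \cap \prod_9 S$ of small centralizer in $G$; third, combine the conjugates of $g$ with $A$ to produce the required lower bound.

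For the first stage, (A3) says each $G_i$ has no nontrivial representation of dimension less than $|G_i|^{1/L}$, and the projection of $A$ to any $G_i$ with $i \in I_l$ has density $D_i/|G_i| \geq |G_i|^{-1/3L}$. Gowers' quasi-randomness theorem, applied componentwise using (A1), then yields $\pi_l(A.A.A) = G_l$. Consequently there is a section $\phi : G_l \to A.A.A$, which we write as $\phi(h) = (h, \psi(h))$ with $\psi : G_l \to G_s$.

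The main obstacle is the second stage. For any $h_1, h_2 \in G_l$ the cocycle $\psi(h_1)\psi(h_2)\psi(h_1 h_2)^{-1}$ lies in $G_s$ (viewed as a subgroup of $G$) and in $\prod_9 S$, hence in $\pi_l^{-1}(1) \cap \prod_9 S$. If $\psi$ is far from every genuine homomorphism, this cocycle already produces noncentral elements in many factors of $G_s$. If instead $\psi$ is close to a homomorphism $\psi_0 : G_l \to G_s$, which is the conclusion of Farah's theorem when the cocycle is typically small, quasi-simplicity (A2) together with (A1) drastically restricts $\psi_0$: each simple factor of $G_l$ maps either trivially or isomorphically into $G_s$, and each isomorphism type occurs at most $L$ times in $G_s$. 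Replacing $\psi$ by $\psi \cdot \psi_0^{-1}$ reduces to the case where the cocycle is genuinely nontrivial in many coordinates, producing a $g$ noncentral in sufficiently many factors $G_i$ to guarantee $[G : \CC(g)] \geq |G|^{c}$ for some $c > 0$ depending only on $\e$ and $L$ (using that a noncentral element of a simple $G_i$ has centralizer of index at least $|G_i|^{1/L}$ by (A3) via induced representations, cf.\ (\ref{eq_index})).

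Once such a $g$ is in hand, the hypothesis of Proposition \ref{prp_prod} applied to the subgroup $\CC(g)$ gives $\chi_S(g'\CC(g)) < [G:\CC(g)]^{-\e}|G|^{\d}$ for every $g' \in G$, so $S$ meets at least $[G:\CC(g)]^{\e} |G|^{-\d}$ distinct cosets of $\CC(g)$. Hence the set of conjugates $X := \{s g s^{-1} : s \in S\} \subset \prod_{11} S$ has at least this many elements, all with trivial $\pi_l$. Multiplying on the right by $A \subset S$, products $sgs^{-1} \cdot a$ with distinct values of $\pi_l(a)$ are automatically distinct, and for each fixed value of $\pi_l(a)$ varying $s \in S$ yields $|X|$ distinct elements, so
\[
|{\textstyle\prod_{12}}S| \geq |X \cdot A| \geq |\pi_l(A)| \cdot |X| \geq \Bigl(\prod_{i \in I_l} D_i\Bigr) \cdot |G|^{c\e - \d},
\]
which gives the claim with $\d' = c\e$.
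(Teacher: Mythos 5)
The overall strategy you outline—Gowers/Nikolov–Pyber for $\pi_l(A.A.A)=G_l$, then Farah's stability theorem to produce a noncentral $g\in\pi_l^{-1}(1)\cap\prod_9 S$, then count conjugates—is exactly the paper's route. However, the heart of the argument (your second stage) has a genuine gap.

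The paper's Lemma \ref{lem_a} is a proof by contradiction: it assumes every $h\in\pi_l^{-1}(1)\cap\prod_9 S$ satisfies $d(\pi_s(h),1)\le\d'\log|G|$, deduces that the section $\psi$ is a $\d'\log|G|$--approximate homomorphism, invokes Farah to obtain a genuine homomorphism $\f$ close to $\psi$, and then derives a contradiction from the coset-covering hypothesis on $S$: the graph of $\f$ is a subgroup $H<G$ of index $|G_s|\ge|G|^{\e-\d}$ (using $|G_l|\le|S.S.S|\le|G|^{1-\e+\d}$, an implicit side hypothesis you never state), and $S.S.S$ sits in fewer than $|G|^{26\d'}$ cosets of $H$, which for small $\d,\d'$ violates $\chi_S(gH)<[G:H]^{-\e}|G|^\d$. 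Your proposal never uses the coset hypothesis at this stage at all. Instead you claim that when Farah produces a homomorphism $\psi_0$ close to $\psi$, ``quasi-simplicity restricts $\psi_0$'' and ``replacing $\psi$ by $\psi\psi_0^{-1}$ reduces to the case where the cocycle is genuinely nontrivial.'' This does not work: if $d(\psi(h),\psi_0(h))$ is small for all $h$, then $\tau(h)=\psi(h)\psi_0(h)^{-1}$ is uniformly close to the identity, so the cocycle of $\tau$ is just as small—there is no reduction. The structure theory of homomorphisms $G_l\to G_s$ is a red herring here; what rules out the ``close to a homomorphism'' case is precisely the hypothesis that $S$ does not concentrate in cosets, and without invoking it the desired $g$ simply need not exist (e.g.\ if $S$ itself were the graph of a homomorphism).

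Two smaller issues: (1) Your first stage says Gowers is ``applied componentwise using (A1)'' to get $\pi_l(A.A.A)=G_l$. Largeness of each $\pi_i(A)$ does not by itself give surjectivity of the joint projection—one has to make the choices compatible across factors, which the paper does by an induction exploiting the regular tree structure of $A$ (for each prefix, the fiber of extensions has exactly $D_i$ elements). (2) In your final inequality you use $|\pi_l(A)|\ge\prod_{i\in I_l}D_i$, which is correct but again needs the tree structure of $A$; it is not a triviality for a non-prefix index set $I_l$. Neither of these is fatal—the first is the paper's own argument abbreviated, the second is true—but stage 2 as written does not yield the element $g$ and needs to be replaced by the contradiction argument.
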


Recall that $G_s=\times_{i\in I_s}G_i$, $G_l=\times_{i\in I_l}G_i$
and $\pi_s$ and $\pi_l$ are the projections to these subgroups
respectively.

%
By (A3), any nontrivial representation of $G_i$ is of dimension
at least $|G_i|^{1/L}$.
It was pointed out by Nikolov and Pyber \cite[Corollary 1]{NP}
that a result of Gowers \cite[Theorem 3.3]{Gow} imply that if
$A,B,C\subset G_i$ are subsets such that $|A||B||C|>|G_i|^{3-1/L}$
then $A.B.C=G_i$.

Let $i_1\le\ldots\le i_{n'}$ be the indices in $I_l$ and for
$1\le n''\le n'$ set
$G_{\{i_1,\ldots,i_{n''}\}}=G_{i_1}\times\ldots\times G_{i_{n''}}$
and denote by $\pi_{\{i_1,\ldots,i_{n''}\}}$ the
projection to this subgroup.
We prove by induction that
\[
\pi_{\{i_1,\ldots,i_{n''}\}}(A.A.A)=G_{\{i_1,\ldots,i_{n''}\}}.
\]
For $n''=1$, this follows directly from \cite[Corollary 1]{NP} and
from $\pi_{i_1}(A)\ge D_{i_1}\ge |G_i|^{1-1/3L}$.
Now assume that the claim holds for some $n''$ and
take an arbitrary element $g\in G_{\{i_1,\ldots,i_{n''+1}\}}$.
By the induction hypothesis there are elements $h_1,h_2,h_3\in A$
such that
\[
\pi_{\{i_1,\ldots,i_{n''}\}}(h_1h_2h_3)
=\pi_{\{i_1,\ldots,i_{n''}\}}(g).
\]
Define the sets
\[
B_i=\{x\in A\sep\pi_{\{i_1,\ldots,i_{n''}\}}(x)
=\pi_{\{i_1,\ldots,i_{n''}\}}(h_i)\}
\]
and note that
\[
\pi_{i_{n''+1}}(B_i)\supset
\pi_{i_{n''+1}}(\{x\in A\sep\pi_{\le i_{n''+1}-1}(x)=
\pi_{\le i_{n''+1}-1}(h_i)\})
\]
hence
$|\pi_{i_{n''+1}}(B_i)|\ge D_{i_{n''+1}}
\ge|G_{i_{n''+1}}|^{1-1/3L}$.
Now an application of \cite[Corollary 1]{NP} to the sets
$\pi_{i_{n''+1}}(B_i)$ gives that
$g\in\pi_{\{i_1,\ldots,i_{n''+1}\}}(A.A.A)$ whence the claim follows.

Define the distance of two elements $g,h\in G_s$
by
\[
d(g,h)=\sum_{i\in I_s:\,\pi_i(g)\neq\pi_i(h)}\log|G_i|.
\]
\begin{lem}
\label{lem_a}
If $|S.S.S|\le|G|^{1-\e+\d}$ then there is an element $g\in\prod_9S$
such that
\[
\pi_l(g)=1\quad {\rm and}\quad d(\pi_s(g),1)>\d'\log|G|,
\]
where $\d'>0$ is a constant depending only on $\e$ and $L$.
\end{lem}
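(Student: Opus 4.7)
The plan is a proof by contradiction. Assume every $g \in \pi_l^{-1}(1) \cap \prod_9 S$ satisfies $d(\pi_s(g), 1) \le \d'\log|G|$ for a small $\d' > 0$ depending on $\e$ and $L$, to be fixed.

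Under this assumption, $\pi_s(\pi_l^{-1}(1) \cap \prod_9 S)$ is contained in the ``ball'' $B := \{x \in G_s : d(x, 1) \le \d'\log|G|\}$, which is a union over subsets $J \subset I_s$ with $\sum_{i\in J}\log|G_i| \le \d'\log|G|$ of the subgroups $\prod_{i\in J}G_i$ of $G_s$. Each summand has size $\le |G|^{\d'}$, and the combinatorial number of admissible $J$ is bounded by $|G|^{\d}$ using (\ref{eq_size}) applied to the constraint on the $|G_i|$, so $|\pi_l^{-1}(1) \cap \prod_9 S| \le |G|^{\d'+\d}$.

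Next I would match this with a lower bound. From the surjectivity $\pi_l(A.A.A) = G_l$ established just above the lemma via \cite[Corollary 1]{NP}, set $F_y := \pi_l^{-1}(y) \cap A.A.A$. A standard pair-counting argument gives $\sum_y |F_y|^2 \ge |A.A.A|^2/|G_l|$, and each pair $(a,b)$ in the same fiber produces an element $a^{-1}b \in \pi_l^{-1}(1) \cap \prod_6 S$ (using symmetry of $S$); since each such element is hit by at most $|A.A.A|$ pairs, we obtain $|\pi_l^{-1}(1) \cap \prod_6 S| \ge |A.A.A|/|G_l|$. Combining the two bounds yields $|A.A.A| \le |G_l| \cdot |G|^{\d'+\d}$.

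With this upper bound in hand, the contradiction is extracted as follows. From $|A| \le |A.A.A|$ and $|A| = \prod_i D_i \ge |G_l|^{1-1/(3L)}\prod_{i\in I_s}D_i$ (using $D_i \ge |G_i|^{1-1/(3L)}$ for $i\in I_l$), we get $\prod_{i\in I_s}D_i \le |G_l|^{1/(3L)}|G|^{\d'+\d}$. A second covering argument places each $s \in S$ in one of at most $|\pi_l(S)|\cdot|G|^\d$ cosets of subgroups $H_J := \pi_l^{-1}(1)\cap\pi_s^{-1}(\prod_{i\in J}G_i)$ of $G$-index $\ge |G|^{1-\d'}$ (for each $s$ choose $a_1,a_2,a_3\in A$ with $\pi_l(a_1a_2a_3) = \pi_l(s)^{-1}$, so that $sa_1a_2a_3 \in \pi_l^{-1}(1)\cap\prod_4 S \subset \bigcup_J H_J$); summing the hypothesis $\chi_S(gH_J) < [G:H_J]^{-\e}|G|^\d$ over this cover gives $|G_l| \ge |\pi_l(S)| \ge |G|^{\e - O(\d, \d')}$. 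These bounds, fed into the conclusion of Proposition \ref{prp_sd} via Helfgott's inequality (\ref{eq_iterated}) $|\prod_{2^{m+1}}S|/|S| \le (|S.S.S|/|S|)^{2^{m+1}-2}$ and the hypothesis $|S.S.S| \le |G|^{1-\e+\d}$, lead to an inequality which fails when $\d \ll \d' \ll \e/L$.

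The main obstacle is the precise parameter balancing and uniformly handling the combinatorial factor $2^{|I_s|}$ counting admissible $J$'s via (\ref{eq_size}); this is most delicate when $|G_l|$ is close to $|G|$, where the product $\prod_{i\in I_s}D_i$ is short and the upper and lower bounds on $|A.A.A|$ are nearly aligned. The argument is in the spirit of \cite[Section 5]{BGS} adapted to the direct product structure, and the use of Farah's result \cite{Far} on approximate homomorphisms enters in subsequent steps (converting the element $g$ into one of large centralizer index) rather than in Lemma \ref{lem_a} itself.
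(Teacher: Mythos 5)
The paper's proof of Lemma \ref{lem_a} relies centrally on Farah's theorem \cite[Theorem 2.1]{Far} on approximate homomorphisms \emph{inside} the proof of this lemma, not in subsequent steps as your last paragraph asserts. The paper defines $\psi:G_l\to G_s$ by choosing, for each $g\in G_l$, some $h\in S.S.S$ with $\pi_l(h)=g$ and setting $\psi(g)=\pi_s(h)$. The contradiction assumption forces $\psi$ to be a $\d'\log|G|$--approximate homomorphism; Farah upgrades this to a genuine homomorphism $\f$ within distance $24\d'\log|G|$; the graph of $\f$ is a subgroup $H<G$ of index $|G_s|\ge|G|^{\e-\d}$; and the proximity to the graph shows $S.S.S$ sits inside at most $|G|^{26\d'}$ cosets of $H$. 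The hypothesis $\chi_S(gH)<[G:H]^{-\e}|G|^{\d}$ then gives the contradiction immediately.

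Your route avoids Farah and instead tries to extract a contradiction from size inequalities. The preliminary bounds are fine: $|\pi_l^{-1}(1)\cap\prod_9 S|\le|G|^{\d'+\d}$ via the ball estimate and (\ref{eq_size}) applied to $2^{|I_s|}$, the pair-counting giving $|A.A.A|\le|G_l|\,|G|^{\d'+\d}$, and the second covering argument giving $|G_l|\ge|G|^{\e-O(\d,\d')}$. But taken together they do not reach a contradiction. From $|A|\le|A.A.A|\le|G_l|\,|G|^{\d'+\d}$ and $|A|\ge|G_l|^{1-1/(3L)}\prod_{i\in I_s}D_i$ you only learn $\prod_{i\in I_s}D_i\le|G_l|^{1/(3L)}|G|^{\d'+\d}$, which is perfectly consistent with all the hypotheses when $|G_l|$ is close to $|G|$ and $I_s$ is thin. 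Plugging that into Proposition \ref{prp_sd} makes the proposition's lower bound on $|\prod_{2^{m+1}}S|$ \emph{weaker}, not stronger, so pitting it against Helfgott's inequality (\ref{eq_iterated}) and $|S.S.S|\le|G|^{1-\e+\d}$ yields no contradiction: when $\e<1/2$ the inequalities you assemble can all hold simultaneously for $\d\ll\d'\ll\e/L$. The feature your argument fails to reproduce is the \emph{coset covering number}: your second argument covers $S$ by roughly $|\pi_l(S)|\cdot|G|^\d$ cosets, which can be near $|G_l|$, while Farah's rigidity forces $S.S.S$ into only $|G|^{O(\d')}$ cosets of a subgroup of index $|G_s|\ge|G|^{\e-\d}$. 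That drastic reduction from $|\pi_l(S)|$ to $|G|^{O(\d')}$ is exactly what makes the coset hypothesis bite, and it is what the approximate-homomorphism-to-homomorphism step buys; without it the contradiction is out of reach.
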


Following Farah \cite{Far}, we say that a map $\psi:G_l\to G_s$ is a
$\d'$--approximate homomorphism if
\[
d(\psi(g)\psi(h),\psi(gh))\le\d'\qquad{\rm and}
\]
\[
d(\psi(g),(\psi(g^{-1}))^{-1})\le\d'
\]
for all $g,h\in G_l$.
Note that in \cite{Far}, such a $\psi$ is called an approximate
homomorphism of type II.
We recall a result of Farah \cite[Theorem 2.1]{Far} that will be
crucial in the proof.
Let $\psi:G_l\to G_s$ be a $\d'$--approximate homomorphism.
Then there is a homomorphism $\f:G_l\to G_s$ such that
\[
d(\psi(g),\f(g))\le24\d'
\]
for all $g\in G_l$.

\begin{proof}[Proof of Lemma \ref{lem_a}]
Assume to the contrary that for any $g\in\prod_9 S$ with
$\pi_l(g)=1$, we have $d(\pi_s(g),1)\le\d'\log|G|$.
For each $g\in G_l$, pick an element $h\in S.S.S$ with $\pi_l(h)=g$
and set $\psi(g)=\pi_s(h)$.
This gives rise to a map $\psi :G_l\to G_s$, which of course
depends on our choices for $h$.
It follows in turn that for any $g\in G_l$ and $h\in S.S.S$
with $\pi_l(h)=g$, we have $d(\pi_s(h),\psi(g))<\d'\log|G|$
and that $\psi$ is a $\d'\log|G|$--approximate homomorphism.
By \cite[Theorem 2.1]{Far}, there is a homomorphism $\f$
with $d(\psi(g),\f(g))\le24\d'\log|G|$ for any $g\in G_l$.
The elements $g\in G$ satisfying
\[
\pi_s(g)=\f(\pi_l(g))
\]
constitutes a subgroup $H<G$ of index $|G_s|$,
since the cosets of $H$ are represented by the elements $g$
with $\pi_l(g)=1$.
For $h_1\in S.S.S$, the coset $h_1H$ is represented by the element $g_1$
with $\pi_l(g_1)=1$ and $\pi_s(g_1)=\pi_s(h_1)\f(\pi_l(h_1))^{-1}$.
Since
\bean
d(\pi_s(h_1),\f(\pi_l(h_1)))
&\le& d(\pi_s(h_1),\psi(\pi_l(h_1)))+d(\psi(\pi_l(h_1)),\f(\pi_l(h_1)))\\
&<&25\d'\log|G|,
\eean
there
is an index set
$I\subset I_s$ with
$\prod_{i\in I}|G_i|<|G|^{25\d'}$ such that $\pi_i(g_1)\neq1$
exactly if $i\in I$.
If $I$ is given there are at most $|G|^{25\d'}$ choices
for $g_1$.
Thus $S.S.S$ is contained in
$2^n|G|^{25\d'}<|G|^{26\d'}$ cosets of $H$.
This is a contradiction if
\[
|G_s|^{-\e}|G|^{26\d'+\d}<1.
\]
Since $|G_l|\le |S.S.S|\le|G|^{1-\e+\d}$,
we have
$|G_s|\ge|G|^{\e-\d}$.
Now, if $\d$ is small enough (e.g. $\d<\e^2/10$)
we can get the desired contradiction
by an appropriate choice of $\d'$.
\end{proof}

\begin{proof}[Proof of Proposition \ref{prp_ld}]
First we calculate the index of the centralizer $\CC(g)$
of $g$, the element constructed in Lemma \ref{lem_a}.
An element $h$ commutes with $g$ if and only if
$\pi_i(h)\in\CC(\pi_i(g))$ for all indices $i$ for which
$\pi_i(g)\neq1$.
For such an $i$, $[G_i:\CC(\pi_i(g))]>|G_i|^{1/L}$.
Recall that we assume that all the $G_i$ are simple, in particular
their centers are trivial.
Now we see that $[G:\CC(g)]>|G|^{\d'/L}$ with the $\d'$ of Lemma
\ref{lem_a}.
Then $S$ contains elements from at least $|G|^{\e\d'/L-\d}$
cosets of $\CC(g)$.
Thus the set
\[
\{sas^{-1}\sep s\in S\}\subset{\textstyle\prod_{11}S}
\]
contains at least $|G|^{\e\d'/L-\d}$ different elements $h$ with
$\pi_l(h)=1$, whence 
\[
|\prod_{12} S|\ge|G|^{\e\d'/L-\d}\prod_{i\in I_l}D_i,
\]
which was to be proven.
\end{proof}
We conclude with the
\begin{proof}[Proof of Proposition \ref{prp_prod}]
By Propositions \ref{prp_sd} and \ref{prp_ld}, we have
\[
|\textstyle\prod_{2^{m+1}} S|>|S||G|^{-Q\d}\prod_{i\in I_s}D_i^{\d_1'}
\qquad{\rm and}
\]
\[
|\textstyle\prod_{12}S|>|G|^{\d'_2-\d}\prod_{i\in I_l}D_i.
\]
with some constants $\d_1',\d_2'$ and $Q$.
Multiply the first inequality with the $\d_1'$th power of the second
one, and use $|G|\ge|S|$ and $\prod D_i=|A|\ge|S||G|^{-2\d}$ to get
\[
|\textstyle\prod_{2^m+1} S|
|\textstyle\prod_{12} S|^{\d_1'}>
|S|^{1+\d_1'+\d_1'\d_2'}|G|^{-Q'\d}.
\]
By the hypothesis on the set $S$ for $H=\{1\}$, we get $|S|>|G|^{\e-\d}$.
Therefore (\ref{eq_iterated}) gives the claim if $\d$ is sufficiently small.
\end{proof}

\section{(A1)--(A5) for $G_i=SL_d(\F_{p^k})$}
\label{sec_sld}

Let $K$ be a number-field and let $I\subset\OO_K$ be a square-free ideal.
Then $I=P_1\cdots P_n$ for some prime ideals, and
$G=SL_d(\OO_K/I)=SL_d(\OO_K/P_1)\times\cdots\times SL_d(\OO_K/P_n)$.
The last ingredient we need for the proof of Theorem \ref{thm_main}
is that the groups $G_i=SL_d(\OO_K/P_1)$ satisfy the assumptions
(A1)--(A5).
We write $\F_{p^k}$ for the finite field of order $p^k$.

(A1) is immediate, and (A2) is a classical result of Jordan.
Regarding (A3),
Harris an Hering \cite{HaHe} proved that any nontrivial representation
of $SL_d(\F_q)$ is of dimension at least $q^{d-1}-1$ or $(q-1)/2$
when $d=2$ and $q$ is odd.
In fact for our purposes it is enough to note that any such representation
restricted to an appropriate subgroup isomorphic to $SL_2(\F_p)$
gives rise to a nontrivial representation, which is of dimension at
least $(p-1)/2$ by a classical result of Frobenius \cite{Fro}.

We study (A4) and (A5) in the next two sections.

\subsection{Assumption (A4)}
\label{sec_a4}
We recall some results of Helfgott.
Let $G=SL_d(\F_{p})$, and
let $S\subset G$ be a set which is not contained in any
proper subgroup.
Suppose further that $|S|<|G|^{1-\e}$ for some $\e>0$.
Then if $d=2$ \cite[Key Proposition]{Hel}
or if $d=3$ \cite[Main Theorem]{He2}, there is a $\d>0$ depending only on $\e$
such that $|S.S.S|\gg|S|^{1+\d}$.
These results imply (A4) for $G_i=SL_d(\F_{p_i})$
if $d=2$ or $d=3$ the same way
as we proved Theorem \ref{thm_prod} using Proposition \ref{prp_prod}.
We show below that the argument in \cite{Hel} extends easily for groups
$G=SL_2(\F_{p^k})$.
After the circulation of an early version of this paper
I have learnt that this extension of Helfgott's theorem was recently
proven by Oren Dinai in his PhD thesis \cite{Din}.

Let $\L$ be a subset of the multiplicative group
$\F_{p^k}^*$.
Denote by $\L^r$ the set of $r$'th powers of the elements of $\L$
and set
\[
w(\L)=\{w(a)\sep a \in\L\},\qquad{\rm where}\qquad
w(a)=a+a^{-1}.
\]
The only notable change needed to extend Helfgott's argument
for the case $k>1$ is to replace  \cite[Proposition 3.3]{Hel}
by the following
\begin{prp}
\label{prp_Hel}
Let $\L\subset \F_{p^k}^*$ be a set which contains $1$ and is
closed under taking multiplicative inverses.
Let $a_1,a_2\in\F_{p^k}^*$, and assume that if $w(\L^2)$
is contained in a proper subfield $F$ of $\F_{p^k}$,
then $a_1/a_2\notin F$.
Now if $|\L|<p^{(1-\d)k}$, then
\[
\left|\{a_1w(bc)+a_2w(bc^{-1})\sep b,c\in\textstyle\prod_4\L\}\right|
\gg|\L|^{1+\e}
\]
with a constant $\e$ depending only on $\d$.
\end{prp}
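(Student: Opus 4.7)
The approach is to adapt the proof of the prime-field version \cite[Proposition 3.3]{Hel} to the general finite field $\F_{p^k}$. The only substantial modification to Helfgott's argument is at the step where he invokes the Bourgain--Katz--Tao sum-product theorem over $\F_p$: this will be replaced by Bourgain's sum-product theorem over $\F_{p^k}$, which has a built-in escape-from-subfields hypothesis. The conditional hypothesis on $a_1/a_2$ is there precisely to handle the subfield-trapping alternative that Bourgain's theorem leaves open.

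Assume for contradiction that $|T| \le C|\L|^{1+\e}$, where $T$ denotes the set in the statement and $\e$ is a small constant to be chosen at the end. The central algebraic identities
\[
w(b)w(c) = w(bc) + w(bc^{-1}), \qquad w(bc)\, w(bc^{-1}) = w(b)^2 + w(c)^2 - 4,
\]
together with the nondegenerate linear combination $a_1 w(bc) + a_2 w(bc^{-1})$ defining $T$, entangle additive and multiplicative structure on $w(\prod_{O(1)}\L) \subset \F_{p^k}$. Helfgott's pigeonholing and Pl\"unnecke--Ruzsa arguments should then produce a set $X \subset \F_{p^k}$ closely related to $w(\L^2)$, with $|X| \gg |\L|^{1-O(\e)}$, such that both $|X+X|$ and $|X \cdot X|$ are bounded by $|X|^{1+O(\e)}$. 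Bourgain's sum-product theorem over $\F_{p^k}$ then gives that either (i) $|X| \gtrsim p^{k-O(1)}$, which is excluded by $|\L|<p^{(1-\d)k}$ once $\e$ is small relative to $\d$, or (ii) $X$ concentrates in a coset of a proper subfield $F \subsetneq \F_{p^k}$; since $2 = w(1) \in w(\L^2)$ lies in any such coset, the coset must be $F$ itself. Thus $w(\L^2)$ itself concentrates in $F$.

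At this point I would use the hypothesis $a_1/a_2 \notin F$. Since $\{a_1,a_2\}$ is $F$-linearly independent in $\F_{p^k}$, the map $(u,v)\mapsto a_1 u + a_2 v$ from $F\times F$ to $\F_{p^k}$ is injective. When $w(bc), w(bc^{-1}) \in F$, each element of $T$ therefore determines the pair $(w(bc), w(bc^{-1}))$, so $|T|$ is bounded below by the number of such pairs. Because $(b,c) \mapsto (bc, bc^{-1})$ is $O(1)$-to-one and $w$ is essentially $2$-to-one, this count is $\gg |\prod_4 \L|^2 \ge |\L|^2$, which vastly exceeds $|\L|^{1+\e}$ for $\e<1/2$ and $|\L|$ not too small, contradicting the standing assumption.

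The main technical obstacle will be the intermediate Pl\"unnecke--Ruzsa bookkeeping that converts $|T|\le C|\L|^{1+\e}$ into a set $X$ with small sum- and product-doubling so that the trapping of $X$ in $F$ can be transferred back to trapping of $w(\L^2)$ in $F$. One must propagate the subfield condition through each step without losing quantitatively more than $|\L|^{O(\e)}$, so that the final comparison $|\L|^{2-O(\e)} \gg |\L|^{1+\e}$ has enough slack to close. Once $\e$ is chosen small compared to $\d$ and to the constants in Bourgain's theorem, all the losses are absorbed and the contradiction goes through.
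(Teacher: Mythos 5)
Your proposal is correct and follows essentially the same strategy as the paper: subfield dichotomy, the identity $w(a)w(b)=w(ab)+w(ab^{-1})$ to pass between additive and multiplicative structure, Pl\"unnecke--Ruzsa, a sum-product theorem over $\F_{p^k}$ with a subfield escape clause (the paper applies \cite[Theorem 1.5]{Ta2} with $A=w(\L^2)$ and anchor $a=w(1)=2$), and the injectivity of $(u,v)\mapsto a_1u+a_2v$ on $F\times F$ coming from $a_1/a_2\notin F$. The paper's organization is a little cleaner: it first uses the substitution $b=\bar b\bar c$, $c=\bar b\bar c^{-1}$ to observe that the decoupled sum set $a_1w(\L_1)+a_2w(\L_1)$ (with $\L_1=\L^2.\L^2$) sits inside the target set, and then performs the dichotomy "$w(\L^2)\subset F$" versus "$w(\L^2)$ generates $\F_{p^k}$" \emph{before} any sum-product input, so the subfield branch never touches the sum-product machinery at all; there the bound is immediately $|w(\L^2)|^2\ge|\L|^2/16$ from injectivity. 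You instead let the sum-product theorem discover the subfield and then transfer the trapping back to $w(\L^2)$, which is the same idea but forces you to upgrade a "concentration in a coset" conclusion to genuine containment $w(\L^2)\subset F$. One imprecision in your closing count: you cannot claim $\gg|\prod_4\L|^2$ pairs $(w(bc),w(bc^{-1}))$ with both entries in $F$, since the hypothesis only puts $w(\L^2)$ in $F$. You must restrict to $(b,c)=(\bar b\bar c,\bar b\bar c^{-1})$ with $\bar b,\bar c\in\L$, so $(w(bc),w(bc^{-1}))=(w(\bar b^2),w(\bar c^2))\in w(\L^2)\times w(\L^2)$; this yields $\ge|w(\L^2)|^2\ge|\L|^2/16$ distinct pairs, which is still far more than $|\L|^{1+\e}$ and closes the argument.
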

The proof follows the same lines as that of
\cite[Proposition 3.3]{Hel}.
\begin{proof}
Set $\L_1=\L^2.\L^2$.
Using the substitution $b=\bar b\bar c$ and $c=\bar b{\bar c}^{-1}$,
we see that
\bean
a_1w(\L_1)+a_2w(\L_1)
&=&\{a_1w({\bar b}^2)+a_2w({\bar c}^2)\sep\bar b, \bar c\in\L.\L\}\\
&\subset&\{a_1w(bc)+a_2w(bc^{-1})
\sep b, c\in\textstyle\prod_4\L\}.
\eean
If $w(\L^2)$ is contained in a subfield $F$, then $a_1/a_2\notin F$
by assumption, and then trivially
\[
|a_1w(\L_1)+a_2w(\L_1)|\ge|w(\L^2)|^2\ge\frac{1}{16}|\L|^2,
\]
and the claim follows.

Therefore we will assume now that $w(\L^2)$ generates $\F_{p^k}$.
Assume that
\be
\label{eq_a4}
|a_1/a_2 w(\L_1)+w(\L_1)|\le K|\L|
\ee
for some constant $K$.
By the Ruzsa-Pl\"unnecke inequalities \cite{Ru2}
(see also \cite[Corollary 6.9]{TaV}) 
\[
|w(\L_1)+w(\L_1)-w(\L_1)-w(\L_1)|\ll K^4|\L|.
\]
Note that $w(a)w(b)=w(ab)+w(ab^{-1})$, hence
\[
w(\L^2).w(\L^2)\subset w(\L_1)+w(\L_1)
\]
and
\[
|w(\L^2).w(\L^2)-w(\L^2).w(\L^2)|\ll K^4|\L|.
\]
This would contradict the sum-product theorem if
$K=|\L|^\e$ with $\e$ small enough.
The most convenient reference for us is \cite[Theorem 1.5]{Ta2}
that we can apply
with $A=w(\L^2)$ and $a=w(1)=2$.
However the contradiction could also be decuded from the results of
\cite{BKT} or \cite{BGK}.
\end{proof}

To use this proposition we need to replace
\cite[Corollary 4.5]{Hel} by
\begin{lem}
\label{lem_Hel2}
Let $S\subset SL_2(\F_{p^k})$ be  symmetric 
containing 1, and assume that it is not
contained in any proper subgroup.
Let $F$ be a proper subfield of $\F_{p^k}$.
Then there is an absolute constant $R$ such that there is a matrix
\[x=\mat abcd\in\textstyle\prod_R S\]
with $abcd\neq0$ and $ad/bc\notin F$.
\end{lem}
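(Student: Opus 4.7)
The plan is to reduce the problem via the identity $\det x = ad - bc = 1$: we have $ad/bc = 1 + (bc)^{-1}$ whenever $bc \neq 0$, and since $1 \in F$ and $F$ is closed under inversion in $\F_{p^k}^*$, the condition $ad/bc \notin F$ is equivalent to $bc \notin F$. So the goal becomes to produce $x \in \prod_R S$ with $abcd \neq 0$ and $bc \notin F$.

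Because $S$ generates a subgroup not contained in any proper subgroup, $S$ is not contained in any of the proper subgroups (i) the upper Borel $\{c = 0\}$, (ii) the lower Borel $\{b = 0\}$, or (iii) $SL_2(F)$. Hence there exist $s_1, s_2, s_0 \in S$ with $c(s_1) \neq 0$, $b(s_2) \neq 0$, and $s_0$ having at least one entry outside $F$. A short-product analysis using $s_1$ and $s_2$, in the spirit of escape from the bounded-degree subvariety $\{abcd = 0\}$ (a union of four coordinate hyperplanes in $SL_2$), yields an element $x_0 \in \prod_{R_0} S$ with all four entries nonzero, for some absolute $R_0$. If $bc(x_0) \notin F$ we are done; otherwise I would consider short modifications of $x_0$ using $s_0$, such as the conjugate $s_0 x_0 s_0^{-1}$, whose $bc$-entry is an explicit bilinear polynomial in the entries of $s_0$ and $x_0$ with at least one coefficient depending on an entry of $s_0$ lying outside $F$. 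A bounded number of such short products must contain one whose $bc$-entry lies outside $F$, producing the required $x \in \prod_R S$.

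The main obstacle is to achieve this escape uniformly in $|F|$ and $|\F_{p^k}|$, giving an absolute $R$: the set $\{bc \in F\}$ is not a subvariety of bounded degree over $\overline{\F_p}$, so the standard escape-from-subvariety lemma does not apply directly. This is resolved by observing that conjugation in $SL_2$ is an algebraic action of absolutely bounded degree, so the single conjugation $s_0 x_0 s_0^{-1}$ already disrupts the property $bc \in F$ through coefficients involving entries of $s_0$ outside $F$; escape therefore occurs in an absolutely bounded number of steps, independent of the subfield $F$ and of the ambient field $\F_{p^k}$. If one tried to force $bc(s_0^i x_0 s_0^{-j}) \in F$ simultaneously for a bounded list of small $(i,j)$, the resulting system of polynomial identities on the entries of $s_0$ would pin $s_0$ down to a proper algebraic subset of $SL_2$ depending only on $x_0$, and choosing $x_0$ generically (still within $\prod_{R_0} S$) rules this out.
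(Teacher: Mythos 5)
Your first reduction --- using $ad - bc = 1$ to convert $ad/bc \notin F$ into $bc \notin F$ --- matches the paper, and you correctly identify the real obstacle: $\{bc \in F\}$ is an $F$-linear subspace of $\F_{p^k}$, not a bounded-degree subvariety over $\overline{\F_p}$, so the usual escape-from-subvariety lemma does not apply. However, the paragraph you offer as a resolution of this obstacle does not contain an argument. The assertion that forcing $bc(s_0^i x_0 s_0^{-j}) \in F$ for several small $(i,j)$ ``would pin $s_0$ down to a proper algebraic subset of $SL_2$'' is precisely the kind of claim that the obstacle prevents: membership in $F$ is not a polynomial condition of bounded degree over $\overline{\F_p}$, so these are not algebraic conditions on $s_0$ in the relevant sense, and there is no dimension count to run. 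Moreover, $s_0$ is a fixed element of $S$; you cannot vary it, and ``choosing $x_0$ generically within $\prod_{R_0} S$'' to escape a set defined by $F$-membership is circular --- that is again the statement you are trying to prove. So the proposal has a genuine gap at its central step.

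The paper resolves the difficulty differently and the mechanism is worth understanding. Write $x^2 = \bigl(\begin{smallmatrix} a^2+bc & b(a+d) \\ c(a+d) & d^2+bc\end{smallmatrix}\bigr)$, so the off-diagonal product of $x^2$ is $bc(\Tr x)^2$; hence if $bc \in F\setminus\{0\}$ it suffices to produce $x$ with nonzero off-diagonals and $(\Tr x)^2 \notin F$. Since $S$ is not in a proper subgroup, $\prod_4 S$ spans $Mat_2(\F_{p^k})$; taking $y_1,\dots,y_4 \in \prod_4 S$ a basis and $z_1,\dots,z_4$ the trace-dual basis, any $\bar x$ decomposes as $\bar x = \sum_i \Tr(y_i\bar x)\,z_i$. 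Pick $\omega\in\F_{p^k}\setminus F$ with $\omega^2\in F$ (if none exists, the relevant set below is just $F$ itself). The bad set for each coordinate is $\{t: t^2\in F\}= F\cup\omega F$, so $\bar x$ is bad only if it lies in one of the $16$ $F$-subspaces $\omega^{\alpha_1}Fz_1+\cdots+\omega^{\alpha_4}Fz_4$, each of $F$-dimension $4$. Helfgott's escape lemma (Lemma 4.4 of \cite{Hel}), applied with $F$ as the base field and with the crucial observation --- which the paper spells out as a remark --- that its constant depends only on the dimension of the subspaces to be avoided, not on $[\F_{p^k}:F]$, yields $\bar x\in\prod_R S$ avoiding all $16$ subspaces. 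Then $x=y_i\bar x$ has $(\Tr x)^2\notin F$ for some $i$, and one more application of the escape lemma (in the conjugation representation) fixes up the vanishing off-diagonals. This linear-algebra-over-$F$ structure is exactly what makes the escape work with an absolute constant; your proposal has no substitute for it.
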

\begin{proof}
In this proof the value of $R$ may be different at different
occurrences.
First note that for any matrix $x$ with entries as above,
$ad+bc=1\in F$ and hence
\[
bc=\frac{1}{ad/bc-1},
\]
so $x$  satisfy the requirements
of the lemma exactly if $bc\notin F$.
If $x$ does not satisfy this, look at $x^2$ and notice that
the product of the off-diagonal entries is $bc(\Tr x)^2$,
hence it remains to show that $\prod_{N} S$ contains an element
with nonzero off-diagonals and with $(\Tr x)^2\notin F$.

Note that if ${\rm span}(\prod_l S)={\rm span}(\prod_{l+1}S)$,
where ${\rm span} (X)$ denotes $\F_{p^k}$--linear span in
$Mat_2(\F_{p^k})$,
then ${\rm span}(\prod_m S)={\rm span}(\prod_l S)$ for any $m>l$.
From this we conclude that as $S$ is not contained in a proper
subgroup, $\prod_4 S$ must span $Mat_2(\F_{p^k})$.
Let $y_1,y_2,y_3,y_4\in \prod_4 S$ be a basis of $Mat_2(\F_{p^k})$ and
let $z_1,z_2,z_3,z_4$ be the dual basis with respect to the
non-degenerate form $\Tr(yz)$.
Denote by $\o$ an element of $\F_{p^k}$ which is not in $F$
but $\o^2\in F$.
If there is no such element, the rest of the proof is even simpler.
Consider the 16 $F$--vectorspaces
\[
\omega^{\a_1}F\cdot z_1+\omega^{\a_2}F\cdot z_2
+\omega^{\a_3}F\cdot z_1+\omega^{\a_4}F\cdot z_1,
\]
where the $\a_i$ takes the values 0 and 1 independently.
Now we invoke Lemma 4.4 from Helfgott \cite{Hel}, which
gives that there is a matrix $\bar x\in\prod_{R}S$
which is not contained in any of the above subspaces
if $R$ is large enough.
By definition, there is an index $i$ such that
$(\Tr(y_i\bar x))^2\notin F$.
It may happen that one or both off-diagonal entries are zero.
Using \cite[Lemma 4.4]{Hel} now for the representation of
$SL_2(\F_{p^k})$ acting on $Mat_2(\F_{p^k})$ by conjugations, we see
that $wy_i\bar xw^{-1}$ has no zero entries for some $w\in\prod_R S$.
This proves the claim.
\end{proof}

We remark, that in the way \cite[Lemma 4.4]{Hel} is stated,
it gives an $R$ which depends on the dimension of $Mat(\F_{p^k})$
over $F$, however it is easily seen by a careful analysis of the
proof in \cite{Hel} that the dependence is only on the dimension
of the subspaces we want to avoid.

\begin{proof}[Extending {\cite[Key Proposition]{Hel}} to
arbitrary finite fields]
The proof on pp. 616 \cite{Hel} is given for arbitrary
finite fields up to the point when the set $V$ is constructed,
except that we get $|V|<p^{k(1-\d/3)}$ not
$|V|<p^{1-\d/3}$.
If $w(V)$ is contained in a proper subfield of $\F_{p^k}$ then denote
this subfield by $F$, and instead \cite[Corollary 4.5]{Hel}
use Lemma \ref{lem_Hel2} to construct the matrix
\[
\mat abcd.
\]
In what follows simply use Proposition \ref{prp_Hel} instead of
\cite[Proposition 3.3]{Hel}. 
\end{proof}

\subsection{Assumption (A5)}
\label{sec_nor}

We prove that $SL_d(\F_{p^k})$ satisfies (A5) with
$L$ depending on $d$ and $k$.
Note that we can embed $SL_d(\F_{p^k})$ into $GL_{kd}(\F_p)$
by Weil restriction.
We again rely on the description of the subgroup structure
of $GL_d(\F_p)$ given by Nori \cite{Nor}.
Recall that for a group $H<GL_d(\F_p)$, $H^+$ denotes
the subgroup generated by elements of order $p$.
By \cite[Theorem B]{Nor} there is a connected algebraic
subgroup $\wt H<GL_d$ such that $\wt H(\F_p)^+=H^+$.
By \cite[Theorem C]{Nor} there is a commutative $F<H$
such that $p\nmid |F|$ and $H\la_{L_1} FH$  with a constant $L_1$
depending only on $d$.
Moreover, it follows from the proof there, that if $P$ is any
$p$--Sylow subgroup of $H^+$, then $F$ can be chosen to satisfy
\be
\label{eq_F}
F<\NN_H(P),\quad F\cap P=\emptyset\quad{\rm and}\quad
[\NN_H(P):FP]<L_1.
\ee
The choice of $F$ is not unique, even for a fixed
Sylow subgroup $P$, however the following is true.
Let $K<\NN_H(P)/P$ be a group whose order is prime to $P$.
Then there is an $F<\NN_H(P)$ with $K=FP/P$ by
\cite[Theorem 7.41]{Rot} and all such subgroups $F$ are conjugates
of each other by elements of $P$, see Rotman
\cite[Theorem 7.42]{Rot}.

\begin{prp}
\label{prp_a5}
Let $G$ be a quasi-simple subgroup of $GL_d(\F_p)$
such that $G=G^+$.
There are classes $\HH_0,\ldots,\HH_m$ of subgroups of
$G$ such that the following hold with some constants
$L, m$ depending only on $d$:
\begin{itemize}
\item[$(i)$]
$\HH_0=\{Z(G)\}$,
\item[$(ii)$]
each $\HH_i$ is closed under conjugation by elements of $G$.
\item[$(iii)$]
for every proper subgroup $H<G$ there is some $i$ and
a subgroup $H^\sharp\in\HH_i$ such that $H\la_L H^\sharp$,
\item[$(iv)$]
for every pair of subgroups $H_1,H_2\in\HH_i,\;H_1\neq H_2$
there is some
$i'<i$ and $H^\sharp\in\HH_{i'}$ such that
$H_1\cap H_2\la_L H^\sharp$,

\end{itemize}
\end{prp}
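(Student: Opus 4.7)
The plan is to stratify the proper subgroups of $G$ by the dimension of an associated connected algebraic subgroup of $GL_d$, produced by Nori's work when $H^+\neq\{1\}$ and by Zariski closure otherwise. For each connected algebraic subgroup $\wt K\le GL_d$ defined over $\F_p$ with $N_G(\wt K)$ a proper subgroup of $G$, write $N(\wt K):=N_G(\wt K)$. I will set $\HH_j$, for $1\le j\le d^2$, to be the collection of all such $N(\wt K)$ with $\dim\wt K=j$, and put $\HH_0:=\{Z(G)\}$. Property (i) is immediate from the definition, and property (ii) holds because $G$-conjugation replaces $\wt K$ by a $G$-conjugate algebraic subgroup of the same dimension.

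To verify (iii), take a proper $H<G$. If $H^+\neq\{1\}$, Nori's Theorem B provides a connected algebraic $\wt H$ with $H^+=\wt H(\F_p)^+$. Since $H$ automatically normalizes $H^+=\wt H(\F_p)^+$, the canonical (generated by one-parameter unipotent subgroups through the order-$p$ elements) nature of $\wt H$ forces $H$ to normalize $\wt H$ as an algebraic group, so $H\subseteq N(\wt H)\in\HH_{\dim\wt H}$ and $H\la_1 N(\wt H)$. If instead $H^+=\{1\}$, then $|H|$ is prime to $p$; let $\wt H$ be the identity component of the Zariski closure of $H$ in $GL_d$, which is reductive, and note $H\subseteq N_G(\wt H)=N(\wt H)$ with bounded index coming from the finite component group of the closure. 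When $\wt H$ is trivial, $H$ has bounded order and is absorbed into $\HH_0$ via $H\la_L Z(G)$ using that $|Z(G)|$ is bounded.

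For (iv), let $H_1=N(\wt K_1)$ and $H_2=N(\wt K_2)$ be distinct elements of $\HH_j$. Any $g\in H_1\cap H_2$ normalizes each $\wt K_i$, and therefore normalizes $\wt K_1\cap\wt K_2$ together with its identity component $\wt\Psi:=(\wt K_1\cap\wt K_2)^0$. Because $\wt K_1\neq\wt K_2$ are connected of the same dimension $j$, we have $\dim\wt\Psi<j$. Consequently $H_1\cap H_2\subseteq N(\wt\Psi)$, which places the intersection in $\HH_{\dim\wt\Psi}$ with $\dim\wt\Psi<j$, as required.

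The main obstacle lies at the bottom of the hierarchy, where $\wt\Psi$ degenerates: if $\wt K_1\cap\wt K_2$ is zero-dimensional, then $H_1\cap H_2$ need not be almost central, since it can contain Weyl-like elements that permute the finitely many components of $\wt K_1\cap\wt K_2$. To make (A5)(iv) close up at level $0$ in the required fashion, I expect to need a finer grading near the bottom that records the conjugacy type of the finite normalizing data attached to $\wt K_1\cap\wt K_2$, together with a bound (depending only on $d$) on the number of such conjugacy types arising from quasi-simple $G\le GL_d(\F_p)$; applying Jordan's theorem to the prime-to-$p$ part then yields a total depth $m$ absorbed into the constant $L$. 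A related technicality is ensuring that the two variants of the envelope used in (iii) (via Nori when $H^+\neq\{1\}$, and via Zariski closure otherwise) fit into a single hierarchy whose intersections behave consistently with the dimension descent used for (iv).
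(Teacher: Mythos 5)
Your framework is structurally close to the paper's (both stratify by the dimension of a Nori-type algebraic envelope), but the proposal has two genuine gaps, one of which you acknowledge and one of which you do not.

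The unacknowledged gap is in (iii) when $H^{+}=\{1\}$. You assert that if $\wt H$ (identity component of the Zariski closure) is trivial, then $H$ has bounded order and is absorbed into $\HH_0$ by $H\la_L Z(G)$. This is false: a prime-to-$p$ subgroup of $GL_d(\F_p)$ can have order growing with $p$. For instance, a maximal torus of $SL_d(\F_p)$ (or, via restriction of scalars, the image of $\F_{p^k}^{\times}$ inside $GL_{dk}(\F_p)$) has trivial $H^{+}$ and order roughly $p^{d-1}$, but is nowhere near $Z(G)$. What is true (Brauer--Feit, or Nori's Theorem C in the form the paper uses) is that $H$ has an abelian subgroup $F$ with $[H:FH^+]$ bounded by a constant depending only on $d$; $F$ itself is unbounded, and it is the \emph{dimension of its linear span} $V\subset Mat_d(\F_p)$, not its order, that is bounded by $d^2$. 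Your classes, which record only the algebraic envelope $\wt H$ and discard $F$ entirely, cannot distinguish subgroups that share an envelope but differ in their prime-to-$p$ parts, and this is exactly where the argument collapses.

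The second gap is the one you flag at the end: descent in (iv) fails once $(\wt K_1\cap\wt K_2)^0$ becomes trivial, since $N_G(\{1\})=G$ is not a proper subgroup and $H_1\cap H_2$ need not be close to $Z(G)$. Your suggestion of "a finer grading near the bottom recording conjugacy type" is not made precise and, as stated, does not give a bound independent of $p$. The paper's fix resolves both gaps at once by using a \emph{two-index} hierarchy $\HH_{i,j}$, where $i=\dim\wt H$ and $j$ is the least dimension of a linear subspace $V\subset Mat_d(\F_p)$ such that $F=V\cap\NN_G(P)\cap\NN_G(H^+)$ for a commutative complement $F$ with $H=FH^+$ (relative to a fixed $p$-Sylow subgroup $P$ of $H^+$). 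Ordering lexicographically in $(i,j)$, the descent in (iv) then proceeds via the algebraic envelope when $\wt H_1\neq\wt H_2$, and via $\dim(V_1\cap V_2)<j$ (together with a Sylow-theory argument to pin down $F_1\cap F_2$ and an index bound of the form $L_1^{2^{d-j}}$ that compounds controllably) when the envelopes agree. Note also that the paper's class members are of the form $F^\sharp H^+$ rather than full normalizers $N_G(\wt H)$; using normalizers as you do makes (iii) trivial at the cost of throwing away precisely the abelian data needed for (iv).
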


\begin{proof}
In each subgroup $H<G$ which is generated by elements of order $p$,
distinguish a $p$-Sylow subgroup $P$.
This can be arbitrary, but should be fixed throughout the proof.
For integers $i$ and $j$ we define the classes $\HH_{i,j}$.
A subgroup $H<G$ belongs to $\HH_{i,j}$ precisely if $Z(G)<H$, $\dim\wt H=i$
and $j$ is the
least integer for which the following hold.
There is a commutative subgroup $F<\NN_H(P)$ such that
\be
\label{eq_Hij1}
\begin{split}
Z(G)<F,\quad
H=FH^+,\quad
F\cap P=\emptyset\quad {\rm and}\\
[\NN_{H^+}(P):(F\cap H^{+})P]<L_1^{2^{d-j}},
\end{split}
\ee
and there
is a $j$ dimensional subspace $V$ of $Mat_d(\F_p)$ such that
\be
\label{eq_Hij2}
F=V\cap\NN_G(P)\cap\NN_G(H^+).
\ee
Order the nonempty classes $\HH_{i,j}$ in such a way that
$\HH_{i,j}$ preceeds $\HH_{i',j'}$ if $i<i'$ or $i=i'$
and $j<j'$.

The first nonempty class is
$\HH_{0,j}=\{Z(G)\}$ for some $j$, and $(i)$ follows.
Since conjugation is a linear transformation on $Mat_d(\F_p)$,
$(ii)$ is clear.

Let $H<G$ be a proper subgroup, and replace it by $Z(G)H$
if necessary, to ensure that $Z(G)<H$.
Let $F$ be a subgroup of $\NN_H(P)$ that satisfies
(\ref{eq_F}).
Without loss of generality, we can assume that $Z(G)<F$.
Set
\[
F^\sharp={\rm span}(F)\cap\NN_G(P)\cap\NN_G(H^+),
\]
where ${\rm span}(F)$ is the linear span of $F$ in the vectorspace
$Mat_d(\F_p)$.
First we remark that $F^{\sharp}$ does not contain an element
of order $p$, in fact its elements can be mutually diagonalized
over an appropriate extension field.
This implies that $F^{\sharp}\cap P=\emptyset$.
Since $F^{\sharp}\subset\NN_G(H^+)$, we can define the subgroup
$H^{\sharp}=F^{\sharp}H^+$, and we have $(H^{\sharp})^+=H^+$.
Since $[H:FH^+]<L_1$ and $FH^+<H^\sharp$,
for $(iii)$ we only need to show that $H^\sharp\in\HH_{i,j}$
for some $i$ and $j$.
This holds with $i=\dim \wt H$ and with some $j\le\dim{\rm span}(F)$,
since
$F^\sharp$ is commutative, and we have
\[
[\NN_{H^+}(P):(F^\sharp\cap H^+)P]\le[\NN_{H^+}(P):(F\cap H^+)P]=
[\NN_{FH^+}(P):FP]\le L_1.
\]
Here the equation in the middle
follows from the fact $\NN_{FH^+}(P)=F\NN_{H^+}(P)$.

It remains to show $(iv)$.
Let $H_1$ and $H_2$ be two different groups in $\HH_{i,j}$.
If $\wt H_1\neq \wt H_2$, then
\[
\dim (\wt H_1\cap\wt H_2)\le\dim \wt{H_1\cap H_2}<i
\]
and $(H_1\cap H_2)^\sharp\in\HH_{i',j'}$ with some
$i'<i$.
Therefore we may assume $\wt H_1=\wt H_2$ and hence
$H_1^+=H_2^+$.
Let $P$ be the distinguished $p$-Sylow subgroup
and denote by $F_l<\NN_{H_l}(P)$ and $V_l$ ($l=1,2$) the subgroups and
subspaces for which $(\ref{eq_Hij1})$ and $(\ref{eq_Hij2})$ hold.
We show that there is an $H\in\HH_{i,j'}$ for some $j'<j$ such that
$H_1\cap H_2\la_{L_1^{2^{d-j+1}}}H$.
We have $[\NN_{H_l}(P):F_lP]<L_1^{2^{d-j}}$ for $l=1,2$, hence
\[
[\NN_{H_1\cap H_2}(P):F_1P\cap F_2P]<L_1^{2^{d-j+1}}.
\]
By \cite[Theorem 7.41]{Rot}
as mentioned before, there is a subgroup $F<\NN_H(P)$ with $F\cap P=\emptyset$
and $FP=F_1P\cap F_2P$.
Moreover, since conjugation is linear
we can assume by \cite[Theorem 7.42]{Rot}  that $F=F_1\cap F_2$.
The claim follows if we define $H=FH^+$, since
\[
[\NN_{H^+}(P):(F\cap H^+)P]\le[\NN_{H^+}(P):(F_1\cap H^+)P]\cdot
[\NN_{H^+}(P):(F_2\cap H^+)P]
\]
and $\dim (V_1\cap V_2)<j$.

\end{proof}

\section{Proof of Theorem \ref{thm_main}}
\label{sec_proof}

Let notation be the same as in the statement of the theorem.
First we note that by \cite[Claim 11.19]{HLW},
it is enough to prove that $\GG(SL_d(\OO_K/I),\pi_I(S'))$
form a family of expanders with some $S'\subset\Ga$, hence
we can assume without loss of generality that Theorem \ref{thm_escp}
holds with $S=S'$.
If $H<SL_d(\OO_K/I)$ and $\pi_I(S)\subset H$, then by
Theorem \ref{thm_escp}, $[SL_d(\OO_K/I):H]<C$ for some constant $C$
which depends on the $\d$ and the implied constant of that theorem.
Let $J$ be a square-free ideal for whose prime factors $P$,
$\pi_P(S)$ does not generate $SL_d(\OO_K/P)$.
Since each proper subgroup in $SL_d(\OO_K/P)$ is of index at least
$N(P)^{\d'}$ for some $\d'>0$, we get $N(J)<C^{\d'}$.
Here, and everywhere below $\d'$ is a constant which may depend on $S$
and which need not be the same at different occurrences.
Thus there is at most a finite number of prime ideals $P$
such that $\pi_P(S)$ is not generating, and from now on, we denote
by $J$ the product of those prime ideals.

Let $I$ be an ideal which is prime to $J$ and write $G=SL_d(\OO_K/I)$,
and $\overline S=\pi_I(S)$.
Denote by $l^2(G)$ the vectorspace of complex valued functions
on $G$.
Consider the operator on $l^2(G)$ which is convolution by
$\chi_{\overline S}$ from the left.
Denote its matrix in the standard basis by $M$.
It is plain that $|S|M$ is the adjacency matrix of the graph
$\GG(G,\overline S)$.
In light of the results of Dodziuk \cite{Dod}, Alon and Milman
\cite{AlMi} and Alon \cite{Alo} already mentioned in the introduction,
we have to give an upper bound on the second largest eigenvalue
of $M$ independently of $I$.
For $g\in G$, denote by $\a(g)$ the left translation by $g$ on $l^2(G)$.
$\a$ is called the regular representation of $G$, and it is well known
that $l^2(G)$ decomposes as a direct sum
$V_0\oplus V_1\oplus\ldots\oplus V_m$, such that each $\a\restr{V_i}$
is irreducible and the multiplicity of every irreducible representation
of $G$ in this decomposition is the same as its dimension.
Therefore it is left to show that if $\b$ is a nontrivial
irreducible representation
of $G$, and $\l$ is an eigenvalue of the operator
\[
\frac{1}{|S|}\sum_{g\in\overline S}\beta(s),
\]
then $\l<c<1$ for some constant $c$ independent of $I$.
Replacing $I$ by a larger ideal if necessary, we may assume that the
representation is faithful.
Faithful representations of $G$ are tensor products of nontrivial
representations of the direct factors, hence they are of dimension
at least $|G|^{\d'}$ as we noted at the beginning of section \ref{sec_sld}.
Hence $\l$ is an eigenvalue of $M$ with multiplicity at least $|G|^{\d'}$.

Denote by $(M)_{i,j}$ the $i,j$ entry of $M$ and notice
that for an
integer $k$, the rows of $M^k$ are translates of $\chi_{\overline S}^{(k)}$.
Then
\[
\Tr(M^{2k})=\sum_{i,j\le|G|}(M^k)_{i,j}^2=|G|\|\chi_{\overline S}^{(k)}\|_2^2,
\]
whence
\be
\label{eq_eigen}
\l^{2k}\le|G|^{1-\d'}\|\chi_{\overline S}^{k}\|_2^2.
\ee
If the index of a subgroup $H<SL_d(\OO_K/I)$ is large, then we can
cancel the implied constant in Theorem \ref{thm_escp} by making $\d$
smaller.
If the index is small, then we can get a nontrivial bound
$\chi_{\overline S}^{(k)}(H)<c<1$, since we assumed that $\overline S$
generates the group.
Thus if $I$ is restricted to ideals prime to $J$, Theorem \ref{thm_escp}
holds with the implied constant set to 1.
Now apply it for $H=\{1\}$ to get
\[
\|\chi_{\overline S}^{(\log N(I))}\|_2^2<|G|^{-\d'}.
\]
We saw in section \ref{sec_sld} that $G$ satisfies (A0)--(A3) and (A5).
It also satisfy (A4) if $d=2$ or if $d=3$ and $K=\Q$ or if we assume that
(\ref{eq_helfgott}) holds if $F$ ranges over the fields $\OO_K/P$,
$P$ prime.
Therefore we can apply Theorem \ref{thm_prod} repeatedly
to get
\[
\|\chi_{\overline S}^{(C\log(N(I)))}\|_2^2<|G|^{-1+\e}
\]
for arbitrary $\e>0$ with some constant $C$ depending on $\e$.
If $\e$ is less than the $\d'$ in (\ref{eq_eigen}), the theorem follows.


\bigskip

{\sc Department of Mathematics, Princeton University, Princeton,
NJ 08544, USA and

Analysis and Stochastics Research Group of the
Hungarian Academy of Sciences, University of Szeged, Szeged, Hungary}

{\em e-mail address:} pvarju@princeton.edu

\begin{thebibliography}{99}
\bibitem{AMS}
H. Abels, G. A. Margulis and G. A. Soifer,
{\em Semigroups containing proximal linear maps},
Israel J. Math. {\bf91} (1995),
1--30.


\bibitem{Alo}
N. Alon,
{\em Eigenvalues and expanders},
Combinatorica
{\bf 6} No. 2. (1986),
83--96.


\bibitem{AlMi}
N. Alon and V. D. Milman,
{\em $\l_1$, isoperimetric inequalities for graphs,
and superconcentrators},
J. Combin. Theory Ser. B,
{\bf 38} No. 1. (1985),
73--88.

\bibitem{BeYg}
C. A. Berenstein and A. Yger,
{\em Effective Bezout identities in $Q[z\sb 1,\cdots,z\sb n]$},
Acta Math. {\bf 166} (1991),
69--120.

\bibitem{BG1}
J. Bourgain and A. Gamburd,
{\em Uniform expansion bounds for Cayley graphs of
$SL_2(\mathbb F_p)$,}
Ann. of Math.
{\bf 167} (2008),
625--642.

\bibitem{BG2}
J. Bourgain and A. Gamburd,
{\em Expansion and random walks in $SL_d(\mathbb Z/p^n\mathbb Z)$:I},
J. Eur. Math. Soc.
{\bf 10} (2008),
987--1011.

\bibitem{BG3}
J. Bourgain and A. Gamburd,
{\em Expansion and random walks in $SL_d(\mathbb Z/p^n\mathbb Z)$:II},
preprint

\bibitem{BGS}
J. Bourgain, A. Gamburd and P. Sarnak,
{\em Affine linear sieve, expanders, and sum-product},
preprint, available at
http://www.math.princeton.edu/sarnak/sespM8.pdf

\bibitem{BGK}
J. Bourgain, A. A. Glibichuk and S. V. Konyagin,
{\em Estimates for the number of sums and products
and for exponential sums in fields of prime order},
J. London Math. Soc.
{\bf 73} No. 2. (2006),
380--398.

\bibitem{BKT}
J. Bourgain, N. Katz and T. Tao,
{\em A sum-product estimate in finite fields, and applications}
Geom. Funct. Anal.
{\bf 14} (2004),
27--57.

\bibitem{CaSe}
A. Cano and J. Seade,
{\em On the equicontinuity region of discrete subgroups of $PU(1,n)$},
preprint, available at
http://arxiv.org/abs/0809.1546v1

\bibitem{Din}
O. Dinai,
{\em Expansion properties of finite simple groups,}
PhD thesis, Hebrew University, 2009.

\bibitem{Dod}
J. Dodziuk,
{\em Difference equations, isoperimetric inequality and transience
of certain random walks},
Trans. Amer. Math. Soc.
{\bf{284}} No. 2. (1984),
787--794.

\bibitem{Far}
I. Farah,
{\em Approximate homomorphisms II: group homomorphisms},
Combinatorica
{\bf 20} (2000),
47--60.

\bibitem{Fro}
G. Frobenius,
{\em \"Uber the gruppencharaktere},
Sitzungsberichte der K\"oniglich Preu\ss ischen
Akademie der Wissenschaften zu Berlin (1896),
985--1021.


\bibitem{GoMa}
I. Ya. Goldsheid and G. A. Margulis,
{\em Lyapunov exponents of a product of random matrices} (Russian),
Uspekhi Mat. Nauk {\bf 44} no. 5 (1989),
13--60,
translation in Russian Math. Surveys {\bf 44} no. 5 (1989),
11--71 .


\bibitem{Gow}
W. T. Gowers,
{\em Quasirandom Groups},
Combin. Probab. Comput.
{\bf 17} (2008),
363--387.

\bibitem{Hel}
H. A. Helfgott,
{\em Growth and generation in $SL_2(\mathbb Z/p\mathbb Z)$},
Ann. of Math.
{\bf 167} (2008),
601--623.

\bibitem{He2}
H. A. Helfgott,
{\em Growth in $SL_3(\mathbb Z/p\mathbb Z)$},
preprint, available at
http://arxiv.org/abs/0807.2027

\bibitem{HaHe}
M. E. Harris and C. Hering,
{\em On the smallest degrees of projective representations
of the groups $PSL(n, q)$},
Canad. J. Math.
{\bf 23} (1971)
90--102.

\bibitem{HLW}
S. Hoory, N. Linial and A. Widgerson,
{\em Expander graphs and their applications},
Bull. Amer. Math. Soc.,
{\bf 43} No. 4 (2006),
439--561.


\bibitem{Ke}
H. Kesten,
{\em Symmetric random walks on groups,}
Trans. Amer. Math. Soc.,
{\bf 92} (1959),
336--354. 

\bibitem{LLR}
D. D. Long, A. Lubotzky and A. W. Reid,
{\em Heegaard genus and property $\tau$ for hyperbolic 3-manifolds},
J. Topol.,
{\bf 1} (2008),
152--158.

\bibitem{NP}
N. Nikolov and L. Pyber,
{\em Product decompositions of quasirandom groups and
a Jordan type theorem},
preprint, available at
http://arxiv.org/abs/math/0703343

\bibitem{Nor}
M. V. Nori,
{\em On subgroups of $GL_n(\F_p)$},
Invent. math.,
{\bf 88} (1987),
257--275.

\bibitem{Rot}
J. J. Rotman,
{\em An introduction to the theory of groups}, Fourth edition,
Graduate Texts in Mathematics, {\bf 148}
Springer-Verlag, New York,
1995.

\bibitem{Ru2}
I. Z. Ruzsa,
{\em An application of graph theory to additive number theory},
Sci. Ser. A Math. Sci. (N.S.),
{\bf 3} (1989),
97--109.



\bibitem{SaX}
P. Sarnak and X. X. Xue,
{\em Bounds for multiplicities of automorphic representations},
Duke Math. J.,
{\bf 64} no. 1, (1991),
207--227.

\bibitem{Tao}
T. Tao,
{\em Product set estimates for non-commutative groups},
Combinatorica,
{\bf 28} no. 5. (2008),
547--594.

\bibitem{Ta2} 
T. Tao,
{\em The sum-product phenomenon in arbitrary rings},
preprint, available at
http://arxiv.org/abs/0806.2497

\bibitem{TaV}
T. Tao and V. H. Vu,
{\em Additive combinatorics},
Cambridge University Press, Cambridge,
2006.

\bibitem{Tit}
J. Tits,
{\em Free subgroups in linear groups},
J. Algebra,
{\bf 20} (1972),
250--270.

\bibitem{Wo}
W. Woess,
{\em Random walks on infinite graphs and groups,} 
Cambridge Tracts in Mathematics, {\bf 138}
Cambridge University Press, Cambridge,
2000.



\end{thebibliography}
\end{document}